\newcounter{theorem}
\newtheorem{cor}[theorem]{Corollary}
\newtheorem{lem}[theorem]{Lemma}
\newtheorem{pro}[theorem]{Proposition}
\newtheorem{rem}[theorem]{Remark}
\newcounter{intro}
\newtheorem{theo}[intro]{Theorem}
\newcommand{\cref}[1]{Corollary \ref{#1}}
\newcommand{\eref}[1]{(\ref{#1})}
\newcommand{\lref}[1]{Lemma \ref{#1}}
\newcommand{\subs}[1]{$\bullet$ {\it #1}}
\newcommand{\rest}{{\!}\upharpoonright} 
\newcommand{\dint}{\displaystyle \int}
\newcommand{\dlim}{\displaystyle \lim}
\newcommand{\dsum}{\displaystyle \sum}
\newcommand{\tir}{\discretionary{.}{}{---\kern.7em}}
\newcommand\I{\mathbb{I}}
\newcommand\CP{\mathbb{CP}}
\newcommand\N{\mathbb{N}}
\newcommand\R{\mathbb R}
\newcommand{\Sphere}{\mathbb S} 
  \newcommand\mcE{\mathcal E}
\newcommand\mcC{\mathcal C} \newcommand\mcI{\mathcal I}
  \newcommand\mcL{\mathcal L}
\newcommand\mcD{\mathcal D}  \newcommand\mcH{\mathcal H}
\newcommand\barM{\overline{M_1}}
\newcommand{\eps}{\varepsilon} 
\renewcommand{\phi}{\varphi}   
\newcommand{\wt}{\widetilde}           
\newcommand{\cqfd}{\hfill$\square$}
\newcommand{\Id}{\operatorname{Id}}
\newcommand{\id}{\operatorname{id}}
\renewcommand{\Im}{\operatorname{Im}}
\newcommand{\Ker}{\operatorname{Ker}}
\newcommand{\Dom}{\operatorname{Dom}}
\newcommand{\spec}{\operatorname{spec}}
\newcommand{\Spec}{\operatorname{Spec}}
\newcounter{look}
\begin{document}

\title[Partial collapsing and the spectrum]
{Partial collapsing and the spectrum of the Hodge-de Rham operator}

\author{Colette ANN\'E}
\address{Laboratoire de Math\'ematiques Jean Leray, Universit\'e de Nantes,
 CNRS, Facult\'e des Sciences, BP 92208, 44322 Nantes, France}
\email{colette.anne@univ-nantes.fr}
\author{Junya TAKAHASHI}
\address{Research Center for Pure and Applied Mathematics,
  Graduate School of Information Sciences, 
  T\^ohoku University, Aoba 6-3-09, Sendai, 980-8579, Japan}
\email{t-junya@math.is.tohoku.ac.jp}
\date{\today, 
\emph{File: }\texttt{genco-9.tex}
\\{$2010$ {\it Mathematics Subject Classification.} 
 Primary $58J50$; Secondary $35P15,$ $53C23,$ $58J32$. \\
 {\it Key Words and Phrases.} Laplacian, Hodge-de Rham operator, 
 differential form, eigenvalue, collapsing of Riemannian manifolds, 
 conical singularity, elliptic boundary value problem.} 
}

\begin{abstract}
 The goal of the present paper is to calculate the limit spectrum of the 
 Hodge-de Rham operator under the perturbation of collapsing one part of 
 a manifold obtained by gluing together two manifolds with the same 
  boundary. 
 It appears to take place in the general problem of blowing-up conical 
 singularities as introduced in Mazzeo \cite{Maz} and Rowlett \cite{Row1, Row2}.  \\

 \hspace{-0.5cm}
 R\'esum\'e.\ \
 Nous calculons la limite du spectre de l'op\'erateur de Hodge-de Rham sur 
 les formes diff\'erentielles dans le cas d'\'effondrement d'une partie 
 d'une vari\'et\'e obtenue en collant deux vari\'et\'es de m\^eme bord.
 Ce r\'esultat apporte un nouvel \'eclairage aux questions de {\it blowing up 
  conical singularities} introduites par Mazzeo \cite{Maz} et Rowlett 
  \cite{Row1, Row2}. 
\end{abstract}

\maketitle



\section{Introduction.}
 
 This work takes place in the general context of the spectral studies of 
 singular perturbations of the metrics, as a manner to know what are the 
  topological or metrical meanings carried by the spectrum of 
  geometric operators. 
 We can mention in this direction, without exhaustivity, studies on the 
 adiabatic limits (\cite{MM}, \cite{R}), on collapsing (\cite{Fuk}, 
 \cite{Lot1,Lot2}), on resolution blowups of conical singularities 
 (\cite{Maz}, \cite{Row1,Row2}) and on shrinking handles (\cite{AC2, ACP}).

 The present study can be considered as a generalization of the results of
 \cite{AT}, where we studied the limit of the spectrum of the Hodge-de Rham 
  (or the Hodge-Laplace) operator under collapsing of one part of 
  a connected sum. 

  In our previous work, we restricted the submanifold $\Sigma$, 
 used to glue the two parts, to be a sphere. 
 In fact, this problem is quite related to resolution blowups of conical 
 singularities: the point is to measure the influence of the topology of 
 the part which disappears and of the conical singularity created at 
 the limit of the `big part'. 
 If we look at the situation from the `small part', we understand the 
 importance of the {\it quasi-asymptotically conical space}
 obtained from rescaling the small part and gluing an infinite cone, 
 see the definition below in (\ref{M2tilde}).

\begin{figure}[ht]\label{fig:genco-fig}
 \begin{center}
 \includegraphics*[height=6cm,width=15cm]{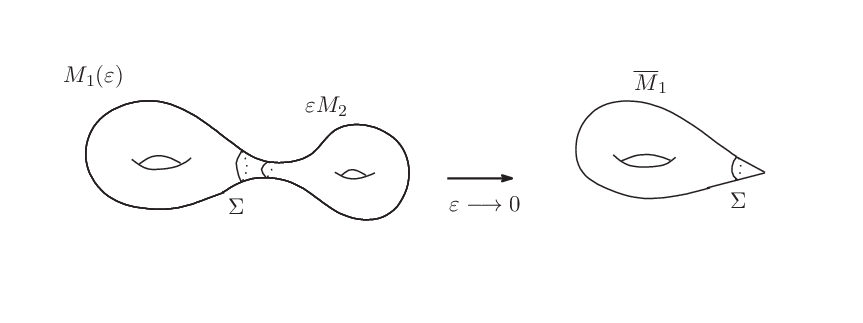}%
 \vspace{-1.5cm} 
 \caption{Partial collapsing of $M_{\varepsilon}$}
 \end{center}
\end{figure}

 When $\Sigma$ is the sphere ${\mathbb S}^n,$ 
 the conical singularity is quite simple.
 There are no {\it half-bound states}, called extended solutions 
 in the sequel, on the quasi-asymptotically conical space. 
 Our result presented here takes care of these new possibilities and 
 gives a general answer to the problem studied by Mazzeo and Rowlett. 
 Indeed, in \cite{Maz,Row1,Row2}, it is supposed that the spectrum of the 
 operator on the quasi-asymptotically conical space does not meet $0$. 
 Our study relaxes this hypothesis. 
 It is done only with the Hodge-de Rham operator, but can easily be 
 generalized.

 Let us fix some notations.

 \subsection{Set up.}

 Let $M_1$ and $M_2$ be two connected oriented compact manifolds with 
 the same boundary $\Sigma$, a compact manifold of dimension $n \geq 2$. 
 We denote by $m=n+1$ the dimension of $M_1$ and $M_2.$
 We endow $\Sigma$ with a fixed metric $h$. 

 Let $\overline{M}_1$ be the manifold with conical singularity obtained from 
 $M_1$ by gluing  $M_1$ to a cone 
   $ \mcC = [0, 1) \times \Sigma \ni(r,y)$: 
{\it there exists on $\overline{M}_1 = M_1 \cup \mcC$ a metric $\bar{g}_1$ 
 which writes, on the smooth part $r>0$ of the cone, $dr^2+r^2 h.$}

 We choose on $M_2$ a metric $g_2$ which is `trumpet like', 
 {\it i.e.\ $M_2$ is isometric near the boundary to 
 $[0, \frac{1}{2}) \times \Sigma$ with the conical metric which writes
  $ds^2+(1-s)^2 h,$ 
  if $s$ is the coordinate defining the boundary by $s=0.$}

 For any $\eps$ with $0 \le \eps < 1,$ we define 
 $$ \mcC_{\eps, 1} = \{ (r,y) \in \mcC \, | \, r > \eps \} \quad \hbox{and} 
    \quad   M_1(\eps) = M_1 \cup \mcC_{\eps, 1}.
 $$
 The goal of the following calculus is to determine the limit spectrum of 
 the Hodge-de Rham operator acting on the differential forms of the 
 Riemannian manifold
$$ M_{\eps} = M_1(\eps) \cup_{\eps.\Sigma} {\eps.} M_2
$$
 which is obtained by gluing together $(M_1(\eps), g_1)$ and $(M_2, \eps^2 g_2).$ 
 We remark that, by construction, these two manifolds have isometric 
 boundary and that the metric $g_{\eps}$ obtained on $M_{\eps}$ is smooth.
\begin{rem}
 The common boundary $\Sigma$ of dimension $n$ has some topological 
 obstructions. 
 In fact, since $\Sigma$ is the boundary of the oriented compact manifold 
$M_1$, 
  $\Sigma$ is oriented cobordant to zero.
 So, by Thom's cobordism theory, all the Stiefel-Whitney and all 
   the Pontrjagin   numbers vanish (cf.\ C.\ T.\ C.\ Wall \cite{Wall[60]}
  or \cite{Milnor-Stasheff[74]}, $\S 18$, $p.217$). 
  Furthermore, this condition is also sufficient, that is, the inverse 
 does hold. 
  Especially, it is impossible to take $\Sigma^{4k}$  
  as the complex projective spaces $\CP^{2k}, (k \ge 1),$ 
  because the Pontrjagin number $p_k (\CP^{2k}) \neq 0.$ 
\end{rem}
\subsection{Results.}

 We can describe the limit spectrum as follows: 
 it has two parts. One part comes from the big part, namely $\overline{M_1}$, 
  and is expressed by the spectrum of a good extension 
 of the Hodge-de Rham operator on this manifold with the conical singularity. 
 This extension is self-adjoint and comes from an extension of the 
 Gau\ss-Bonnet operators. 
 All these extensions are classified by subspaces $W$ of the total 
 eigenspaces corresponding to the eigenvalues within $(- \frac{1}{2}, \frac{1}{2})$ 
 of an operator $A$ acting on the boundary $\Sigma$. 
 This point is developed below in Section $\ref{GBconic}$. 
  The other part comes from the collapsing part, namely $M_2,$ 
 where the limit Gau\ss-Bonnet operator is taken with boundary conditions 
 of the Atiyah-Patodi-Singer type. 
 This point is developed below in Section \ref{GBaps}. 
 This operator, denoted $\mcD_2$ in the sequel, can also be seen on the 
 quasi-asymptotically conical space $\widetilde{M}_2$ already mentioned,
  namely 
\begin{equation}\label{M2tilde}
 \widetilde{M}_2 = M_2 \cup \Big( [1,\infty) \times \Sigma \Big)
 \end{equation}
 with the metric $dr^2+r^2 h$ on the conical part. 
 Only the zero eigenvalue is concerned with this part. 
 In fact, the manifold $M_{\eps}$ has small eigenvalues,
  in the difference with \cite{AT}, and the \emph{multiplicity of $0$} 
  at the limit corresponds to the total eigenspaces of these small and
  null eigenvalues.
 Thus, our main theorem, which asserts the convergence of the spectrum, 
 has two components.
\begin{theo}\label{A}
  The set of all positive limit values is just equal to that of all 
 positive spectrum of the Hodge-de Rham operator $\Delta_{1,W}$ 
 on $\overline{M_1},$ where
\begin{equation*}
   W \subset \bigoplus_{ |\gamma| < \frac{1}{2} } \Ker (A - \gamma)
\end{equation*}
is the space of the elements that generate {\it extended solutions}
  on $\widetilde{M_2}.$
 A precise definition is given below in $(\ref{defW})$.
\end{theo}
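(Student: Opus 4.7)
The plan is to prove the spectral equality by two matching min-max comparisons, combined with a careful asymptotic analysis of eigenforms in the shrinking collar region that links $\overline{M_1}$ to the rescaled $\widetilde{M_2}$.

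First I would establish the \emph{upper bound}: every positive $\lambda \in \Spec(\Delta_{1,W})$ is the limit of eigenvalues of the Hodge-de Rham operator $\Delta_\eps$ on $M_\eps$. Given an eigenform $\omega$ of $\Delta_{1,W}$ with eigenvalue $\lambda > 0$, the classification from Section \ref{GBconic} gives the asymptotic expansion of $\omega$ near the cone tip of $\overline{M_1}$ as a sum of terms of the form $r^\gamma \psi_\gamma$ with $\psi_\gamma \in \Ker(A-\gamma)$, whose leading coefficients (those with $|\gamma|<\tfrac12$) lie in the prescribed subspace $W$. I would then build a test form $\omega_\eps$ on $M_\eps$ by truncating $\omega$ outside a small neighbourhood of the cone tip and gluing it smoothly, in a collar of width of order $\eps$, to a suitable extended solution $\Phi_W$ on $\widetilde{M_2}$ rescaled by $\eps$; by the very definition of the space $W$ recalled in $(\ref{defW})$, the asymptotic data of $\Phi_W$ at infinity can be chosen to match those of $\omega$ at the tip. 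Estimating the Rayleigh quotient gives $\langle \Delta_\eps \omega_\eps,\omega_\eps\rangle / \|\omega_\eps\|^2 = \lambda + o(1)$, and a standard min-max argument produces eigenvalues of $\Delta_\eps$ close to $\lambda$ with the correct multiplicity.

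For the \emph{lower bound}, any positive accumulation point $\lambda = \lim \lambda_\eps > 0$ of $\Spec(\Delta_\eps)$ must lie in $\Spec(\Delta_{1,W})$. Take normalized eigenforms $\phi_\eps$. On the big part $M_1(\delta)$ with $\delta > \eps$ fixed and small, elliptic regularity together with the uniform bound on $\lambda_\eps$ yields, along a subsequence, local convergence to a form $\omega$ on the smooth part of $\overline{M_1}$ satisfying the Hodge-de Rham equation with eigenvalue $\lambda$. On the small part, the change of variable $r = \eps\rho$ turns the metric $\eps^2 g_2$ into $g_2$ and the cone part of $g_1$ into the infinite cone $[1,\infty)\times\Sigma$; after the degree-dependent conformal rescaling of forms, $\phi_\eps$ becomes a form on $\widetilde{M_2}$ satisfying the eigenvalue equation with eigenvalue $\eps^2 \lambda_\eps \to 0$. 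A compactness argument on $\widetilde{M_2}$, using the APS-type boundary behavior of Section \ref{GBaps}, produces a limit $\Phi$ annihilated by the Hodge-de Rham operator on $\widetilde{M_2}$ and with controlled growth on the cone at infinity, that is, an \emph{extended solution}. Expanding both $\omega$ and $\Phi$ on the eigenbasis of $A$ in the overlap region and identifying the coefficients of $r^\gamma \psi_\gamma$ with $|\gamma|<\tfrac12$ shows that the leading data of $\omega$ at the cone tip lie in $W$; hence $\omega \in \Dom(\Delta_{1,W})$ and $\lambda \in \Spec(\Delta_{1,W})$.

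The main obstacle is the analysis of the intermediate collar region, say $\{ \eps \lesssim r \lesssim \sqrt{\eps}\,\}$, where both the cone-tip expansion of $\omega$ and the cone-end expansion of $\Phi$ are active simultaneously. One must show that the Fourier modes of $\phi_\eps$ associated with the eigenspaces $\Ker(A-\gamma)$ for $|\gamma|\ge \tfrac12$ have uniformly negligible contribution in this annulus, so that the matching uses only the critical modes that are the ones parametrising $W$. This requires Agmon-type decay estimates for the Gau\ss-Bonnet operator in cylindrical coordinates, of the sort developed in Sections \ref{GBconic}--\ref{GBaps}. A separate but related point is to rule out the loss of positive spectrum to zero: as noted in the discussion preceding the theorem, the multiplicity of $0$ at the limit is absorbed by the small and null eigenvalues coming from the $W$-indexed extended solutions on $\widetilde{M_2}$, so the positive part of the spectrum indeed converges without gain or loss.
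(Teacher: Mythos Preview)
Your overall two-sided min-max strategy and the upper-bound construction are in line with the paper's approach: transplanting an eigenform of $\Delta_{1,W}$ by matching its singular part, through the definition of $W$, to a rescaled extended solution on $\widetilde{M_2}$ is exactly what is done in Section~\ref{upper}.

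The lower bound, however, has a real gap at the step where you conclude that the leading data of the limit $\omega$ lie in $W$. Your proposed mechanism is to expand both the $M_1$-limit $\omega$ and the $\widetilde{M_2}$-limit $\Phi$ in the overlap and identify their $\Ker(A-\gamma)$ coefficients for $|\gamma|<\tfrac12$. This does not work, because the rescaling that produces $\Phi$ kills precisely those modes. Concretely, the compatibility condition~\eqref{recol0} gives $\sigma_{2,\eps}(1)=\eps^{1/2}\sigma_{1,\eps}(\eps)$; if $\sigma_1^\gamma(r)\sim d_\gamma r^{-\gamma}$ with $|\gamma|<\tfrac12$, the $\gamma$-component of the boundary data on the $M_2$ side is $\eps^{1/2-\gamma}d_\gamma\to 0$. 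Hence the limit $\Phi$ is not merely an extended solution: it is genuinely $L^2$, with \emph{no} component in $\bigoplus_{|\gamma|\le 1/2}\Ker(A-\gamma)$ (this is Proposition~\ref{phi2} in the paper). The coefficients you want to read off from $\Phi$ are therefore all zero, and no direct matching is possible.

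The paper recovers the $W$-condition by a different route. From $\|D_2\phi_{2,i}\|_{L^2(M_2)}=O(\eps_i)$ one gets $\|T(\eps_i^{1/2}\sigma_{1,i}(\eps_i))\|_{H^{-1/2}}=O(\eps_i)$ for the Dirac--Neumann operator $T$ of~\eqref{T}. After discarding the $\Pi_{<-1/2}$ part (Lemmas~\ref{l1}--\ref{l2}), one is left with a sum $\sum_{|\gamma|\le 1/2}\eps_i^{-\gamma}d_{\gamma,i}$ whose image under $T$ is small. The key technical point (Proposition~\ref{Bphi}) is that $T(\Im(\Pi_{>1/2})\cap H^{1/2})$ is \emph{closed} in $H^{-1/2}$, so one can project onto its orthogonal complement via an operator $B$ whose kernel detects exactly the $W$-condition. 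An inductive peeling-off of the different powers $\eps_i^{-\gamma}$ (Proposition~\ref{phi1W}) then forces each limit $d_\gamma$ into $W_\gamma$. This closedness-plus-iteration argument, together with the explicit Bessel-function form~\eqref{sigma-gamma} of the eigenforms on the cone, is the substantive mechanism in the lower bound; it is not captured by a direct coefficient comparison or by generic decay estimates.
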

\begin{theo}\label{B}
  The multiplicity of $0$ in the limit spectrum is given by the sum 
\begin{equation*}
    \dim \Ker (\Delta_{1,W}) + \dim \Ker(\mcD_2) + i_{\frac{1}{2}}, 
\end{equation*}
 where $i_{\frac{1}{2}}$ denotes the dimension of the vector space 
 $\mcI_{\frac{1}{2}},$ see $(\ref{I1/2}),$ of extended solutions 
 $\omega$ on $\widetilde{M_2}$ introduced by Carron \cite{C}, 
 admitting on restriction to $r=1$
  a non-trivial component in $\Ker (A - \tfrac{1}{2}).$
\end{theo}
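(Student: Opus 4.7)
The plan is to establish the two inequalities for the multiplicity of $0$ in the limit spectrum separately, refining the quasi-mode versus convergence scheme used in \cite{AT} to account for the general topology of $\Sigma$ and the possible existence of extended solutions on $\widetilde{M_2}$.

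For the lower bound, I would construct approximate harmonic forms on $M_\eps$ of three types, one for each term in the claimed sum. First, for every $\omega \in \Ker(\Delta_{1,W})$, extend $\omega$ across the junction $r = \eps$ to the $\eps \cdot M_2$ side using the associated extended solution on $\widetilde{M_2}$ prescribed by $W$; a standard cut-off produces a trial form whose $(d+d^*)$-image has $L^2$-norm $o(1)$. Second, for each $\omega_2 \in \Ker(\mcD_2)$, rescale $\omega_2$ from $(M_2, g_2)$ to $(M_2, \eps^2 g_2)$ and extend by zero on $M_1(\eps)$ via a smooth cut-off near the boundary; the APS condition and the $L^2$ decay of $\omega_2$ at the asymptotic cone end of $\widetilde{M_2}$ guarantee the matching error is small. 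Third, for each element of $\mcI_{\tfrac{1}{2}}$, take the corresponding extended solution on $\widetilde{M_2}$ and truncate the cone part at a suitable scale; because its $\Ker(A - \tfrac{1}{2})$-component decays only like $r^{1/2}$, the $L^2$-norm of the truncated form grows logarithmically in $\eps$, but normalizing by $\sqrt{|\log \eps|}$ still yields a trial form with Rayleigh quotient tending to zero. Inspection of their localization (the first type mostly on $M_1$, the second on $\eps \cdot M_2$, the third straddling the collar) shows the three families are approximately orthogonal.

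For the upper bound, suppose there are strictly more than $N := \dim \Ker(\Delta_{1,W}) + \dim \Ker(\mcD_2) + i_{\tfrac{1}{2}}$ eigenvalues converging to $0$ and take an $L^2$-orthonormal family of eigenforms $\omega_\eps^{(k)}$ with $\lambda_\eps^{(k)} \to 0$. Uniform $H^1$ elliptic estimates for the Gau\ss-Bonnet operator, combined with a diagonal subsequence extraction, show that the restrictions to compact subsets of $\overline{M_1}$ converge to an interior-harmonic form $\omega_1^{(k)}$, while the restrictions to $\eps \cdot M_2$, rescaled to $(M_2, g_2)$, converge on compact subsets of $\widetilde{M_2}$ to a form $\omega_2^{(k)}$ in the kernel of the Gau\ss-Bonnet operator on $\widetilde{M_2}$. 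Propagating the matching data carried by $\omega_\eps^{(k)}$ across the junction $r=\eps$ forces $\omega_1^{(k)} \in \Ker(\Delta_{1,W})$ and constrains $\omega_2^{(k)}$ either to belong to $\Ker(\mcD_2)$ or to arise as an extended solution via the identification with $\mcI_{\tfrac{1}{2}}$. Counting these limits gives at most $N$ independent pairs, contradicting the orthonormality.

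The main obstacle is the analysis of the critical index $\gamma = \tfrac{1}{2}$ and the contribution $i_{\tfrac{1}{2}}$. Because extended solutions with non-trivial $\Ker(A - \tfrac{1}{2})$-data are borderline $L^2$ on $\widetilde{M_2}$, both the quasi-mode construction and the detection of their limits in the convergence argument require careful work in weighted $L^2$-spaces on the cone. Concretely, the truncation for the quasi-modes must be chosen at exactly the right scale so that the logarithmic growth of the $L^2$-norm does not spoil the Rayleigh quotient estimate; conversely, one must verify that a sequence of eigenforms whose rescaled limits on $\widetilde{M_2}$ carry genuine $r^{1/2}$ asymptotic data produces precisely $i_{\tfrac{1}{2}}$ independent extra zero modes beyond those already accounted for by $\Ker(\mcD_2)$, with no hidden double-counting against the $\Ker(\Delta_{1,W})$-part.
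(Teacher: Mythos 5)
Your overall architecture matches the paper's: quasi-modes for the three types of limit objects give the lower bound on the multiplicity, and a convergence argument for sequences of eigenforms gives the upper bound. The three quasi-mode constructions you sketch are also essentially the paper's (extension of $\Ker(\Delta_{1,W})$-forms via the prolongation $P_\eps$ and the corresponding elements of $\Ker(D_2)$, rescaled and cut-off $\Ker(\mcD_2)$-forms, and truncated extended solutions for $\mcI_{\frac{1}{2}}$ normalized by $|\log\eps|^{-1/2}$). One minor slip: the $\Ker(A-\tfrac{1}{2})$-component of an extended solution decays like $r^{-1/2}$, not $r^{1/2}$; it is the logarithmic divergence of $\int_1^{1/\eps} r^{-1}\,dr$ that makes $|\log\eps|^{-1/2}$ the right normalization.

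The genuine gap is in the convergence direction. You propose detecting the $\mcI_{\frac{1}{2}}$ contribution by showing that the rescaled limits $\omega_2^{(k)}$ on compact subsets of $\widetilde{M_2}$ ``carry genuine $r^{1/2}$ asymptotic data,'' i.e.\ are extended solutions rather than $L^2$-solutions. But this cannot happen: as the paper shows in Proposition \ref{phi2}, those rescaled limits on compact subsets are always $L^2$-harmonic, with \emph{no} component in $\Im(\Pi_{[-\frac{1}{2},\frac{1}{2}]})$. For an eigenform family whose limit carries $\mcI_{\frac{1}{2}}$-data, both $\omega_1^{(k)}$ (the limit on compact subsets of $M_1(0)$) and $\omega_2^{(k)}$ vanish, even though $\|\omega_\eps^{(k)}\|_{L^2(M_\eps)}=1$ for all $\eps$; so your extraction scheme as written produces fewer than $N$ independent nonzero limits and the contradiction argument breaks down. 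The paper resolves this by extracting the renormalized boundary datum $\overline{\sigma}_{\frac{1}{2}}$ from the $\gamma=\tfrac{1}{2}$ component of $\phi_{1,i}$ near the singularity (Proposition \ref{phi1W}), which converges only after multiplication by $\sqrt{|\log\eps_i|}$, and then proves the mass identity \eqref{decompose}, $\|\phi_1\|^2+\|\wt{\phi_2}\|^2+\|\overline{\sigma}_{\frac{1}{2}}\|^2=1$, which accounts for the ``lost'' norm. This is the ingredient your argument is missing: the correct limit Hilbert space is the orthogonal sum $\mcH_\infty = L^2(\overline{M_1})\oplus\Ker(\widetilde{D_2})\oplus\mcI_{\frac{1}{2}}$, and the $\mcI_{\frac{1}{2}}$-component is detected not through weak limits on either side but through a $\sqrt{|\log\eps|}$-renormalized trace at the collapsing neck; orthogonality of the three pieces (and hence absence of double-counting) then comes for free from the orthogonal-sum structure together with the almost-orthogonality Lemma \ref{lem:almost-orthogonal}.
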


\subsection{Comments.}
\subsubsection{}
 Remark that this result is also valid in dimension 2. 
 In order to understand it, look at the following example. Let $I=[0,1]$
 and $M_1=M_2= \Sphere^1 \times I.$ We can shrink half of a torus : 
$\Sphere^1\times\Sphere^1=M_1\cup_\Sigma M_1$ for 
$\Sigma=\Sphere^1\sqcup\Sphere^1.$ 
 Then $\overline{M_1}$ is a 2-sphere with no harmonic 1-forms and 
 $\widetilde{M}_2$ has no $L^2$ harmonic 1-forms. 
 But $i_{\frac{1}{2}}=2.$
Indeed  $\widetilde{M}_2$ is a cylinder with flat ends. With evident
coordinates $(r,\theta),$ $d\theta$ and $\ast(d\theta)\sim\frac{dr}{r}$ 
near $\infty$ give a base for extended solutions.

\subsubsection{}We choose, in our study, a simple metric to make explicit 
 computations.
 This fact is not a restriction, as already explained in \cite{AT}, 
 because of the result of Dodziuk \cite{D} which assures uniform 
 control of the eigenvalues of geometric operators with regard to 
 variations of the metric.

\subsubsection{}
 More examples are given in the last section of the present paper.


\subsubsection*{Acknowledgement.}
 The second author was partially supported by 
 Grant-in-Aid for Young Scientists (B) $24740034$.


 \section{Gau\ss -Bonnet operator.}

 On a Riemannian manifold, the Gau\ss -Bonnet operator is defined as 
 the operator $D = d + d^{\ast}$ acting on differential forms. 
 It is symmetric and can have some closed extensions on manifolds 
 with boundary or with conical singularities. 
 We review these extensions in the cases involved in our study.
\subsection{Gau\ss -Bonnet operator on $M_{\eps}$}

 We recall that, on $M_{\eps}$, a Gau\ss -Bonnet operator $D_{\eps},$ 
 Sobolev spaces and also a Hodge-de Rham operator $\Delta_{\eps}$ 
 can be defined as a general construction on any manifold 
 $X= X_1 \cup X_2,$  which is the union of two Riemannian manifolds 
 with isometric boundaries (the details are given in  \cite{AC2}): 
 if $D_1$ and $D_2$ are the Gau\ss-Bonnet operators ``$d + d^{\ast}$'' 
 acting on the differential forms of each part, the quadratic form 
 \begin{equation}\label{defq} 
  q (\phi) = \dint_{X_1} |D_1 (\phi \rest_{X_1} ) |^2 \, d \mu_{X_1} 
      + \dint_{X_2} |D_2 (\phi  \rest_{X_2} )|^2 \, d \mu_{X_2} 
 \end{equation}
  is well-defined and closed on the domain
$$
  \Dom (q) = \{ \phi = (\phi_1,\phi_2) \in H^1(\Lambda T^{\ast} X_1) \times  
    H^1(\Lambda T^\ast X_2), \, | \,
  \phi_1 \rest_{\partial X_1} \stackrel{L^2}{=} \phi_2 \rest_{\partial X_2}
   \}.
$$
  On this space, the total Gau\ss-Bonnet operator 
  $ D(\phi) =( D_1(\phi_1), \, D_2(\phi_2)) $ 
 is defined and self-adjoint. 
 For this definition, we have, in particular, to identify 
 $(\Lambda T^\ast X_{1}) \rest_{\partial X_1}$ and
 $(\Lambda T^\ast X_{2}) \rest_{\partial X_2}.$ 
 This can be done by decomposing the forms in tangential and normal part 
 (with inner normal), the equality above means then that the tangential parts 
 are equal and the normal parts opposite. 
 This definition generalizes the definition in the smooth case.

 The Hodge-de Rham operator $(d+d^\ast)^2$ of $X$ is then defined as the 
 operator obtained by the polarization of the quadratic form $q$. 
 This gives compatibility conditions between $\phi_1$ and $\phi_2$ 
 on the common boundary. 
 We do not give details on these facts, because our manifold is smooth.
 But we shall use this presentation for the quadratic form.
 \subsection{Gau\ss -Bonnet operator on $\overline{M_1}$.}\label{GBconic}

 Let $D_{1,\min}$ be the closure of the Gau\ss -Bonnet operator defined on 
  the smooth forms with compact support in the smooth part $M_1(0).$ 
  For any such form $\phi_1,$ we write, following \cite{BS} and \cite{ACP}, on the cone $\mcC$
$$
 \phi_{1}  = dr \wedge r^{- (\frac{n}{2} -p+1)} \beta_{1,\eps} + 
              r^{- (\frac{n}{2} -p)}\alpha_{1,\eps} 
$$
 and define $\sigma_1 = (\beta_1, \alpha_1) = U(\phi_1).$
 The operator has, on the cone $\mcC$, the expression
\begin{equation*}
 U D_1 U^{\ast} =\begin{pmatrix} 0 & 1 \\
                                -1 & 0   
                \end{pmatrix}
       \Big( \partial_r + \frac{1}{r} A \Big) \;\text{ with } \; 
         A=      \begin{pmatrix} \dfrac{n}{2} -P  & - D_0 \\
                                   -D_0 & P - \dfrac{n}{2} 
                  \end{pmatrix}, 
\end{equation*}
 where $P$ is the operator of degree which multiplies by $p$ per a $p$-form, 
 and $D_0 = d_0 + d^{\ast}_0$ is the Gau\ss-Bonnet operator 
 on the manifold $(\Sigma, h)$,
 while the Hodge-de Rham operator has, in these coordinates, the expression 
\begin{equation}\label{laplace}
  U \Delta_1 U^{\ast} =  - \partial^2_r + \frac{1}{r^2} A (A+1).
\end{equation}
 The closed extensions of the operator $D_1 = d + d^{\ast}$ on 
 the manifold with conical singularity $\overline{M}_1$ has been 
 studied in \cite{BS} and \cite{L}.
 They are classified by the spectrum of its \emph{Mellin symbol}, 
 which is here the operator with parameter $A+z$. 
\subsubsection*{Spectrum of $A$}\label{spectA}\tir 
 The spectrum of $A$ was calculated in Br\"uning and Seeley \cite{BS}, 
 p.$703$.  By their result, the spectrum of $A$ is given by the values 
\begin{equation}\label{vpA}
\begin{cases}
  & \pm (p - \frac{n}{2}) \ \hbox{ with multiplicity } \dim H^p(\Sigma),  \\
  & \frac{(-1)^{p+1}}{2} \pm \sqrt{ \mu^2 + \Big( \frac{n-1}{2} - p \Big)^2 },
\end{cases}
\end{equation}
 where $p$ is any integer, $0 \leq p \leq n$ and $\mu^2$ runs over 
 the spectrum of the Hodge-de Rham operator on $(\Sigma, h)$ 
 acting on the coexact $p$-forms. 

 Indeed, looking at the Gau\ss-Bonnet operator acting on even forms, 
 they identify even forms on the cone with the sections 
 $( \phi_0, \ldots, \phi_n)$ of the total bundle $\Lambda T^{\ast} (\Sigma)$ 
 by $ \phi_0 + \phi_1 \wedge dr + \phi_2 + \phi_3 \wedge dr + \cdots.$ 
 These sections can as well represent odd forms on the cone by 
 $\phi_0 \wedge dr + \phi_1 + \phi_2 \wedge dr + \phi_3 + \cdots.$
 With these identifications, they have to study the spectrum of the 
 following operator acting on sections of $\Lambda T^\ast(\Sigma)$ 
\begin{equation*}
 S_0=
 \left( \begin{array}{ccccc}
  c_0 & d_0^\ast  & 0   & \cdots & 0 \\
  d_0 & c_1  & d_0^\ast & \ddots & \vdots \\
   0  & d_0  &  \ddots  & \ddots & 0 \\ 
  \vdots & \ddots & \ddots & c_{n-1} & d_0^{\ast} \\
   0  & \cdots & 0 & d_0 & c_n
 \end{array} \right),
\end{equation*}
 if $c_p=(-1)^{p+1}(p- \frac{n}{2}).$
 With the same identification, if we introduce the operator $\wt{S_0}$  
 having the same formula but on the diagonal the terms
 $ \wt{c_p} = (-1)^{p}(p-\frac{n}{2})=-c_p,$
 then the operator $A$ can be written as 
  $$ A = - \left( S_0 \oplus \wt{S_0} \right).$$
 The expression of the spectrum of $A$ is then a direct consequence of 
 the computations of \cite{BS}.
\subsubsection*{Closed extensions of $D_1$}\tir
 Let $D_{1, \max}$ be the the maximal closed extension of $D_1$
 with the domain 
 $$ \Dom (D_{1, \max}) = \{ \phi \in L^2 (\overline{M_1}) \, | \, 
      D_1 \phi \in L^2 ( \overline{M_1})  \,  \}.  $$
 If $\spec (A) \cap\ ( - \frac{1}{2}, \frac{1}{2} ) = \emptyset,$ 
  then $D_{1,\max} = D_{1,\min}$. 
 In particular, $D_1$ is essentially self-adjoint on the space of smooth 
 forms with compact support \emph{away} from the conical singularity.
 
 Otherwise, the quotient $\Dom(D_{1,\max})/ \Dom(D_{1,\min})$ is isomorphic to 
 \begin{equation*}
   B := \bigoplus_{ |\gamma | <  \frac{1}{2} } \Ker ( A - \gamma).
 \end{equation*}
 More precisely, by Lemma $3.2$ of \cite{BS}, there exists a surjective 
 linear map 
 \begin{equation*}
   \mcL : \Dom(D_{1,\max}) \rightarrow B
 \end{equation*}
 with $\Ker (\mcL) = \Dom( D_{1,\min} ).$ Furthermore, we have the estimate
 \begin{equation*}
   \| u(r) - r^{-A} \mcL(\phi) \|^2_{ L^2(\Sigma)} 
     \leq C(\phi) \, |r \log r| 
 \end{equation*}
 for $\phi \in \Dom(D_{1,\max})$ and $u = U (\phi)$.

 Now, for any subspace $W \subset B$, we can associate the operator 
 $D_{1,W}$ with the domain $\Dom(D_{1,W}) := \mcL^{-1}(W)$. 
 As a result of \cite{BS}, all closed extensions of $D_{1,\min}$ are 
 obtained by this way.  
 Remark that each $D_{1,W}$ defines a self-adjoint extension 
 $\Delta_{1,W}= (D_{1,W})^{\ast} \circ D_{1,W}$ of the Hodge-de Rham 
 operator, and, as a result, 
 we have $(D_{1,W})^{\ast} =D_{1,\I (W^\perp)}$, where
 \begin{equation*}
   \I = \begin{pmatrix} 
             0 & \id \\
          -\id & 0   
        \end{pmatrix}, 
    \qquad \text{i.e.} \quad 
   \I \begin{pmatrix} \beta \\ \alpha \end{pmatrix} = 
      \begin{pmatrix} \alpha \\  -\beta \end{pmatrix}.
 \end{equation*}
 This extension is associated with the quadratic form 
  $\phi \mapsto \|D \phi\|^2_{L^2} $ on the domain $\Dom (D_{1,W})$.

 Finally, we recall the results of Lesch \cite{L}. 
 The operators $ D_{1,W},$ and in particular $D_{1,\min}$ and $D_{1,\max}$, 
 are elliptic and satisfy the singular estimate (SE), see page $54$ 
 of \cite{L}, so by Proposition $1.4.6$of \cite{L} and the compactness of 
 $\overline{M_1},$ they satisfy the {\em Rellich property}:  
 the inclusion of $\Dom( D_{1,W})$ into $L^2 ( \overline{M_1} )$ is compact.

\subsection{Gau\ss -Bonnet operator on $M_2$.} \label{GBaps}
 
  We know, by the works of Carron \cite{C, C2}, following Atiyah-Patodi-Singer
  \cite{APS}, that the operator $D_2$ admits a closed extension $\mcD_2$ 
 with the domain defined by the global boundary condition 
 $$ \Pi_{\leq \frac{1}{2}} \circ U = 0,
 $$
 if $\Pi_I$ is the spectral projector of $A$ relative to the interval $I,$
 and $\leq \frac{1}{2}$ denotes the interval $( - \infty, \frac{1}{2} ].$
 Moreover, this extension is elliptic in the sense that the $H^1$-norm of 
 elements of the domain is controlled by the norm of the graph.
 Indeed this boundary condition is related to a problem on a complete 
 unbounded manifold as follows: 

 Let $\widetilde{M}_2$ denote the large manifold obtained from $M_2$ 
 by gluing a conical cylinder 
  $ \mcC_{1,\infty} = [1, \infty) \times \Sigma$ 
  with metric $dr^2+ r^2 h$ and $\widetilde{D}_2$ its Gau\ss -Bonnet 
 operator. 
 A differential form on $M_2$ admits an $L^2$-harmonic extension on 
 $\widetilde{M}_2$ precisely, when the restriction on the boundary 
 satisfies $\Pi_{\leq \frac{1}{2}} \circ U=0.$

 Indeed, from the harmonicity, these $L^2$-forms must satisfy 
   $ ( \partial_r + \frac{1}{r} A) \sigma = 0$ 
 or, if we decompose the form associated with the eigenspaces of $A$ as
 $ \sigma = \sum_{\gamma \in \Spec (A)} \sigma_{\gamma},$ 
 then the equation imposes that for all $\gamma \in \Spec(A)$ 
  there exists  $\sigma^0_{\gamma} \in \Ker (A- \gamma)$ such that 
  $ \sigma_{\gamma} = r^{-\gamma} \sigma^0_{\gamma}.$
  This expression is in $L^2 ( \mcC_{1, \infty} )$ 
 if and only if $\gamma > \frac{1}{2}$ or $\sigma^0_{\gamma} =0.$ 

 It will be convenient to introduce the $L^2$-harmonic extension operator
\begin{equation*}
\begin{matrix}
  P_2 :& \Pi_{> \frac{1}{2}}  \Big( H^{\frac{1}{2}}(\Sigma) \Big) & \to &
       L^2 (\Lambda T^\ast \mcC_{1,\infty}) \\
  & \sigma = \dsum_{\begin{subarray}{c} 
             \gamma \in \Spec (A) \\ \gamma > \frac{1}{2}
             \end{subarray} }   \sigma_{\gamma} & \mapsto 
  & P_2 (\sigma) = U^\ast \Big( 
    \dsum_{\begin{subarray}{c} 
       \gamma \in \Spec(A) \\ \gamma > \frac{1}{2}  
       \end{subarray} }
     r^{-\gamma} \sigma_{\gamma}  \Big).
\end{matrix}
\end{equation*}
 This limit problem is of the category \emph{non-parabolic at infinity} 
 in the terminology of Carron, see particularly Theorem $2.2$ of \cite{C} 
 and Proposition $5.1$ of \cite{C2}, 
 then as a consequence of Theorem $0.4$ of \cite{C}, we know that 
 the kernel of $\mcD_2$ is of finite dimension and that the graph norm of 
 the operator controls the $H^1$-norm (Theorem $2.1$ of \cite{C}).

\begin{pro}\label{D2} 
  There exists a constant $C>0$ such that for each differential form 
  $ \phi \in H^1(\Lambda T^{\ast} M_2)$ satisfying the boundary condition 
  $ \Pi_{\leq \frac{1}{2}} \circ U (\phi)=0,$  
$$
  \| \phi \|^2_{H^1(M_2)} \leq 
    C \left\{ \| \phi \|^2_{L^2(M_2)} + \| D_2 \phi \|^2_{L^2(M_2)} \right\}.
$$
 As a consequence, the kernel of $\mcD_2$, which is isomorphic to 
 $\Ker (\widetilde{D_2}),$ is of finite dimension and can be sent 
 in the total space $\sum_p H^p(M_2)$ of the absolute cohomology.
\end{pro}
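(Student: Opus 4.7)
The plan is to reduce this APS-type boundary problem on $M_2$ to the equivalent problem on the complete manifold $\widetilde{M}_2$, and then invoke Carron's theory of Dirac-type operators non-parabolic at infinity. The reduction is natural: as recalled in the discussion preceding the proposition, the condition $\Pi_{\leq \frac{1}{2}} \circ U(\phi)=0$ at $\partial M_2$ is precisely the compatibility condition for $\phi$ to admit an $L^2$ harmonic extension on the conical cylinder $\mcC_{1,\infty}$ via the operator $P_2$.

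For $\phi \in H^1(\Lambda T^\ast M_2)$ satisfying the boundary condition, I would define an extension by $\widetilde{\phi}|_{M_2} := \phi$ and $\widetilde{\phi}|_{\mcC_{1,\infty}} := P_2(\phi|_{\partial M_2})$. The traces on $\{r=1\}$ match, so $\widetilde{\phi} \in H^1_{\mathrm{loc}}(\widetilde{M}_2)$; the harmonicity of $P_2(\sigma)$ on the cone gives $\widetilde{D}_2\widetilde{\phi}=D_2\phi$ on $M_2$ and zero on the cone. A spectral computation using the explicit formula $P_2(\sigma) = U^{\ast}\bigl(\sum_{\gamma > 1/2} r^{-\gamma}\sigma_{\gamma}\bigr)$ and the discreteness of $\Spec(A)$ (which provides a strict gap $\inf\{\gamma \in \Spec(A) \mid \gamma > \tfrac{1}{2}\} > \tfrac{1}{2}$) yields the $H^1$-bound $\|P_2(\phi|_{\partial M_2})\|_{H^1(\mcC_{1,\infty})} \leq C\|\phi|_{\partial M_2}\|_{H^{1/2}(\Sigma)} \leq C'\|\phi\|_{H^1(M_2)}$ and, similarly, $\|P_2(\phi|_{\partial M_2})\|_{L^2(\mcC_{1,\infty})} \leq C\|\phi|_{\partial M_2}\|_{L^2(\Sigma)}$. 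Hence $\widetilde{\phi} \in H^1(\widetilde{M}_2)$. Theorem $2.1$ of \cite{C} then provides
\begin{equation*}
\|\widetilde{\phi}\|^2_{H^1(\widetilde{M}_2)} \leq C\bigl(\|\widetilde{\phi}\|^2_{L^2(\widetilde{M}_2)} + \|\widetilde{D}_2\widetilde{\phi}\|^2_{L^2(\widetilde{M}_2)}\bigr).
\end{equation*}
Substituting the cone bounds, using the trace interpolation $\|\phi|_{\partial M_2}\|^2_{L^2(\Sigma)} \leq C\|\phi\|_{H^1(M_2)}\|\phi\|_{L^2(M_2)}$, and applying Young's inequality to absorb the resulting small $\|\phi\|^2_{H^1(M_2)}$ term delivers the announced estimate.

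For the second assertion, the same extension-restriction correspondence identifies $\Ker(\mcD_2)$ with $\Ker(\widetilde{D}_2) \cap L^2(\widetilde{M}_2)$, and finite-dimensionality is a direct consequence of Theorem $0.4$ of \cite{C}. To produce the map to absolute cohomology, I would verify that any $\widetilde{\omega} \in \Ker(\widetilde{D}_2) \cap L^2$ is both closed and coclosed via integration by parts on $\widetilde{M}_2$ with cut-offs $\chi_R(r) = \chi(r/R)$ whose boundary contributions vanish at infinity thanks to the $L^2$ hypothesis and the polynomial volume growth of the cone; this yields $\|d\widetilde{\omega}\|^2 + \|d^{\ast}\widetilde{\omega}\|^2 = 0$. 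The restriction $\widetilde{\omega}|_{M_2}$ is then closed and defines a de Rham class in $\bigoplus_p H^p(M_2)$.

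The main technical obstacle is the spectral computation at the start, which requires tracking the precise dependence on the eigenvalues $\gamma$ of $A$ and identifying the output with the $H^{1/2}$-norm $\sum_{\gamma}(1+|\gamma|)\|\sigma_{\gamma}\|^2$ of the boundary trace; the rest of the argument is a careful but routine orchestration of trace inequalities, Young's inequality, and Carron's general results.
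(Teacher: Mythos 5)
Your proposal takes essentially the same route the paper sketches just before the proposition (and that [AT], Proposition 5, to which the paper defers, carries out in the special case $\Sigma = \mathbb{S}^n$): extend $\phi$ by the $L^2$-harmonic extension $P_2$ to the complete manifold $\widetilde{M}_2$, invoke Carron's theory of Dirac-type operators non-parabolic at infinity (Theorems $0.4$ and $2.1$ of \cite{C}) for the finite-dimensionality and the $H^1$-control by the graph norm, then bring the estimate back to $M_2$ via trace interpolation and a Young-inequality absorption. The argument is sound, and the absorption step you use is exactly what makes the self-referencing $\|\phi\|_{H^{1/2}(\Sigma)} \lesssim \|\phi\|_{H^1(M_2)}^{1/2}\|\phi\|_{L^2(M_2)}^{1/2}$ harmless rather than circular. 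The closing Gaffney-type cut-off argument on the complete $\widetilde{M}_2$ for closedness and coclosedness of an $L^2$ solution, restricting to $M_2$ to produce a class in $\bigoplus_p H^p(M_2)$, is likewise the standard way to obtain the map into absolute cohomology and agrees with the paper's viewpoint (cf.\ the Hausel--Hunsicker--Mazzeo identification \eqref{L2-cohomology} quoted later).
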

  A proof of this proposition can be obtained by the same way as 
  Proposition $5$ in \cite{AT}.

\subsubsection*{Extended solutions}\tir
 Recall that Carron defined, for this type of operators, behind the
  $L^2$-solutions of $\widetilde{D_2} (\phi) =0$ which correspond to 
 the solutions of the elliptic operator of Proposition \ref{D2}, 
 {\em extended} solutions which are included in the bigger space 
 $\mathcal{W}$ defined as the closure of the space of smooth 
 $p$-forms with compact support in $\widetilde{M}_2$ 
 for the norm
\begin{equation*}
 \| \phi \|^2_{\mathcal W} := \| \phi \|^2_{ L^2(M_2) } +
             \| D_2 \phi \|^2_{ L^2( \widetilde{M}_2 )}.
\end{equation*}
 A Hardy-type inequality describes the growth at infinity of an 
 extended solution.
\begin{lem}\label{extend}
  For a function $v \in C^{\infty}_0(e, \infty)$ and a real number
   $\lambda$, we have
\begin{itemize}
 \item \ \hbox{if } $\displaystyle  \lambda \neq - \frac{1}{2}, \quad 
    ( \lambda + \frac{1}{2})^2 \int^{\infty}_e  \frac{v^2}{r^2} \, dr 
   \leq  \int^{\infty}_e \frac{1}{ r^{2 \lambda}} \big| \partial_r 
         (r^{\lambda} v) \big|^2 \, dr, $ 
 \item  \ \hbox{if } $\displaystyle  \lambda = - \frac{1}{2},  \quad  
     \frac{1}{4} \int^{\infty}_e \frac{v^2}{ r^2 | \log r|^2 } \, dr 
     \leq  \int^{\infty}_e  r | \partial_r ( r^{- \frac{1}{2}} v) |^2 \, dr. $
\end{itemize}
\end{lem}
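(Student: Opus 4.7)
The plan is to reduce both estimates to classical weighted Hardy inequalities on $(e,\infty)$ via the substitution $w = r^{\lambda} v$, so the right-hand side of each inequality becomes a weighted $L^{2}$-norm of $w'$. Since $v$ has compact support in $(e,\infty)$, so does $w$, which kills all boundary contributions under the integrations by parts.

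For $\lambda \neq -\tfrac{1}{2}$, set $\alpha = -2\lambda$. The goal becomes
\begin{equation*}
   \frac{(\alpha-1)^{2}}{4} \int_{e}^{\infty} r^{\alpha-2} w^{2}\, dr
   \le \int_{e}^{\infty} r^{\alpha} (w')^{2}\, dr .
\end{equation*}
I would start from the identity $\tfrac{d}{dr}\bigl(r^{\alpha-1} w^{2}\bigr) = (\alpha-1) r^{\alpha-2} w^{2} + 2 r^{\alpha-1} w w'$ and integrate over $(e,\infty)$, which gives $(1-\alpha)\int r^{\alpha-2}w^{2}\,dr = 2\int r^{\alpha-1} w w'\,dr$. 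Applying Cauchy–Schwarz (equivalently AM–GM) with a parameter $\epsilon>0$ to the cross term,
\begin{equation*}
   2\bigl|r^{\alpha-1} w w'\bigr|
   \le \epsilon\, r^{\alpha-2} w^{2} + \epsilon^{-1}\, r^{\alpha} (w')^{2},
\end{equation*}
then absorbing and optimizing at $\epsilon = |\alpha-1|/2$ yields the sharp constant $(\alpha-1)^{2}/4$, which in terms of $\lambda$ is exactly $(\lambda+\tfrac{1}{2})^{2}$.

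For the borderline $\lambda = -\tfrac{1}{2}$ (so $\alpha = 1$), the Hardy constant above degenerates to $0$ and a logarithmic weight is required. After substitution the target reads
\begin{equation*}
   \tfrac{1}{4} \int_{e}^{\infty} \frac{w^{2}}{r (\log r)^{2}}\, dr
   \le \int_{e}^{\infty} r (w')^{2}\, dr .
\end{equation*}
Here the choice $r \ge e$ pays off: $\log r \ge 1 > 0$, so no sign issue arises. I would integrate the identity $\tfrac{d}{dr}\bigl(\tfrac{w^{2}}{\log r}\bigr) = \tfrac{2 w w'}{\log r} - \tfrac{w^{2}}{r (\log r)^{2}}$ on $(e,\infty)$; the boundary term at $r=e$ vanishes because $w$ is compactly supported away from $e$, and the one at $\infty$ by the same reason. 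This yields $\int \tfrac{w^{2}}{r (\log r)^{2}}\,dr = 2 \int \tfrac{w w'}{\log r}\, dr$, and splitting the integrand as $\tfrac{w}{\sqrt{r}\log r}\cdot \sqrt{r}\, w'$ and applying Cauchy–Schwarz produces after squaring exactly the constant $\tfrac{1}{4}$.

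The work is almost entirely bookkeeping; the only conceptual point is the passage from the polynomial to the logarithmic weight at the critical exponent, which is standard once one notes that the choice of lower bound $e$ is precisely what normalizes the logarithm. The main pitfall is sign tracking in the integration by parts and the substitution $\alpha = -2\lambda$, which must be done carefully so that $(\alpha - 1)^{2}/4$ comes out as $(\lambda + \tfrac{1}{2})^{2}$.
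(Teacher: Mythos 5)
Your proof is correct and follows essentially the same route as the paper: integrate the derivative of $r^{\alpha-1}w^2$ (respectively $w^2/\log r$) over $(e,\infty)$ and then bound the cross term, which is exactly the paper's integration-by-parts-plus-Cauchy--Schwarz argument in the variable $v$. The only cosmetic differences are that you make the substitution $w=r^{\lambda}v$ explicit and use a parametrized Young inequality with subsequent optimization instead of the paper's Cauchy--Schwarz on the integrals followed by dividing and squaring; the constants agree.
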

 We remark now that, for a $p$-form $\phi$ with support in the infinite cone 
 $\mcC_{e, \infty}$, we can write
\begin{equation*}
\begin{split}
  \| D_2 \phi \|^2_{L^2 (\widetilde{M}_2) } 
  &= \sum_{\lambda \in \Spec(A)} \int^{\infty}_e
     \big\| (\partial_r + \frac{\lambda}{r} ) \sigma_{\lambda}
        \big\|^2_{L^2 (\Sigma)} \, dr \\
  &= \sum_{\lambda \in \Spec(A)} \int^{\infty}_e  \frac{1}{r^{2 \lambda} }
      \big\| \partial_r (r^\lambda \sigma_{\lambda} ) \big\|^2_{L^2 (\Sigma)}
       \, dr.
\end{split}
\end{equation*}
 Thus, as an application of Lemma \ref{extend}, we see that a kernel of 
 $\widetilde{D_2},$ which must be 
   $\sigma_{\lambda}(r) = r^{-\lambda} \sigma_{\lambda}(1)$ 
 on the infinite cone, satisfies the condition of growth at infinity of
 Lemma \ref{extend}. For $ \lambda> - \frac{1}{2}$ there is no restriction
 since $r^{-2 \lambda -2}$ is integrable near $\infty$, as well as for
 $ \lambda= - \frac{1}{2}$: if $v= r^{\frac{1}{2}} v_0$ for large $r$
 then the integral
 $\dint \dfrac{v^2}{ | r \log r |^2} \, dr$ is convergent, 
 so if we require that $\dfrac{1}{r} \phi$ is in $L^2$ then for any 
 $\lambda < - \frac{1}{2}$
$$
     \sigma_\lambda(1) =0.
$$
While the $L^2$-solutions correspond to the condition 
  $ \sigma_{\lambda}(1) = 0$ for any $\lambda \leq \frac{1}{2}.$
 As a consequence, the extended solutions which are not in $L^2$
 correspond to boundary terms with components in the total eigenspaces 
 related to the eigenvalues of $A$ in the interval $[- \frac{1}{2}, \frac{1}{2}].$
   In the case studied in \cite{AT}, there do not exist such eigenvalues 
 and we had not to take care of extended solutions. 

 More precisely, we must introduce the Dirac-Neumann operator 
 (see {\bf 2.a} in \cite{C2}) 
\begin{equation}\label{T}
\begin{split}
  T : H^{k+ \frac{1}{2}}(\Sigma) &\to H^{k- \frac{1}{2}}(\Sigma) \\
  \sigma  &\mapsto  U\circ D_2( \mcE (\sigma) ) \rest_{\Sigma},
\end{split}
\end{equation}
 where $\mcE (\sigma)$ is the solution of the Poisson problem:
\begin{equation*}
 (D_2)^2 (\mcE(\sigma)) =0  \hbox{ on } M_2  \ \text{ and } \ 
  U \circ \mcE(\sigma) \rest_{\Sigma} = \sigma \ \hbox{ on } \Sigma.
\end{equation*}
 In the same way, one can define
\begin{equation}\label{TC}
\begin{split}
  T_\mcC : H^{k+ \frac{1}{2}}(\Sigma) &\to H^{k- \frac{1}{2}}(\Sigma) \\
  \sigma  &\mapsto U\circ  D_2( \wt\mcE (\sigma) ) \rest_{\Sigma},
\end{split}
\end{equation}
 where $\wt\mcE (\sigma)$ is the solution of the Poisson problem:
\begin{equation*}
 (D_2)^2 (\wt {\mcE} (\sigma)) =0  \hbox{ on } \mcC_{1,\infty} \ \text{ and } 
  U \circ  \wt{\mcE}(\sigma) \rest_{\Sigma} = \sigma \ \hbox{ on } \Sigma.
\end{equation*}
 Then $ \Im (T_{\mcC}) = \Im (\Pi_{> \frac{1}{2}})$ is a subspace of 
  $ \Ker (T_{\mcC}) = \Im (\Pi_{\geq - \frac{1}{2}} ).$
 Carron \cite{C2} proved that this operator is continuous for $k \geq 0.$
 The $L^2$-solutions correspond to the boundary values in 
 $\Im (T) \cap \Im ( \Pi_{> \frac{1}{2}} ),$ 
 while extended solutions correspond to the space 
  $\Ker (T) \cap \Im( \Pi_{\geq - \frac{1}{2}} ).$ 
  Carron also proved that in the compact case, $\Ker (T) = \Im (T).$
 We can now define the space $W$ entering in Theorem \ref{A} :
\begin{equation}\label{defW}
\begin{split}
  W &= \bigoplus_{|\gamma|< \frac{1}{2}}  W_{\gamma},  \\ 
  \text{where } \   
  W_\gamma &= \left\{ \, \phi \in \Ker (A-\gamma) \, \big| \,
    {\exists} \eta \in \Im( \Pi_{>\gamma}) 
     \text{ s.t. } T ( \phi + \eta)=0 \, \right\}.
\end{split}
\end{equation} 

 Let us denote by 
\begin{equation}\label{I1/2} 
  \mcI_{\frac{1}{2}} := \Big( \Ker (T) \cap \Im(\Pi_{\geq \frac{1}{2}}) \Big)
       \Big/  \Big( \Ker (T) \cap \Im(\Pi_{> \frac{1}{2} }) \Big)
\end{equation}
 the space of extended solutions with non-trivial component on 
 $\Ker(A - \frac{1}{2}).$
\subsubsection*{Proof of Lemma $\ref{extend}$}
 Let $v \in C_0^{\infty} (e,\infty),$ by integration by parts and 
 the Cauchy-Schwarz inequality, we obtain, 
 for $\lambda \neq - \frac{1}{2}$
\begin{equation*}
\begin{split}
 \int^{\infty}_{e} \frac{v^2}{r^{2}} dr 
  &= \int^{\infty}_{e} \frac{1}{r^{2 \lambda+2}} |r^{\lambda} v |^2 \, dr
  = \int^{\infty}_{e}  \partial_r 
       \left\{ \frac{-1}{(2 \lambda+1 ) r^{2\lambda+1}} \right\} 
        | r^{\lambda} v |^2 \, dr \\
  &= \int^{\infty}_{e} \left\{ \frac{1}{(2 \lambda+1) r^{2 \lambda +1}}
     \right\} 2 (r^{\lambda} v) \partial_r (r^{\lambda} v) \, dr
  = \int^{\infty}_{e}  \frac{2}{(2\lambda +1)} \frac{v}{r}
     \cdot  r^{-\lambda} \partial_r (r^\lambda v)  \, dr \\
  &\leq  \frac{2}{ |2\lambda+1|} 
    \sqrt{ \int^{\infty}_{e} \frac{v^2}{r^2} \, dr} \cdot
    \sqrt{ \int^{\infty}_{e} \Big| r^{-\lambda} 
          \partial_r (r^\lambda v) \Big|^2 \, dr },
\end{split}
\end{equation*}
 which gives directly the first result of Lemma $3$.

 The second one is obtained in the same way:
\begin{equation*}
\begin{split}
  \int^{\infty}_{e} \frac{v^2}{r^2 | \log r |^2} \, dr
    &= \int^{\infty}_{e} \Big( \frac{v}{\sqrt r} \Big)^{2}
         \frac{1}{r | \log r |^2} \, dr
    = \int^{\infty}_{e} \Big( \frac{v}{\sqrt r} \Big)^{2}
       \partial_r \left( \frac{-1}{\log r} \right)  \, dr \\
   &=  \int^{\infty}_{e} \frac{2v}{\sqrt r} \partial_r 
      \left( \frac{v}{\sqrt r} \right) \cdot \frac{1}{\log r} \, dr
   =  \int^{\infty}_{e}  \frac{2v}{r\log r}  \cdot  \sqrt{r} \partial_r
         \left( \frac{v}{\sqrt r} \right)  \, dr \\
  &\leq 2 \sqrt{\int^{\infty}_{e} \frac{v^2}{r^2 | \log r |^2} \, dr}
    \cdot \sqrt{\int^{\infty}_{e} \left| \sqrt{r} \partial_r 
     \left( \frac{v}{\sqrt r} \right) \right|^2 \, dr}.
\end{split}
\end{equation*}
\cqfd

  \section{Notations and tools.}

 Let $q_{\eps}$ be the quadratic form defined on $M_\eps$
 by the formula (\ref{defq}), to write a form 
 $\phi_\eps \in \Dom (q_\eps),$ we use, as in \cite{ACP},  
 the following change of scales: with
\begin{equation*}
 \phi_{1,\eps} := {\phi}_{\eps} \rest_{M_1(\eps)}  \; \text{ and } \;
 \phi_{2,\eps} := \eps^{ \frac{m}{2} - p} {\phi_\eps} \rest_{M_2}.
\end{equation*}
 We write on the cone $\mcC_{\eps,1}$
$$
 \phi_{1,\eps} =dr \wedge r^{- (\frac{n}{2} -p+1)} \beta_{1,\eps} + 
              r^{-( \frac{n}{2} -p)} \alpha_{1,\eps} 
$$
 and define
 $\sigma_{1,\eps} = (\beta_{1,\eps}, \alpha_{1,\eps}) = U (\phi_{1,\eps}).$

 On the other part, it is more convenient to define $r= 1-s$ for 
 $s \in [0, \frac{1}{2} ]$ and write
 $ \phi_{2,\eps} =  dr \wedge r^{- (\frac{n}{2}-p+1)} \beta_{2,\eps}
     + r^{- ( \frac{n}{2} -p)} \alpha_{2,\eps}$ 
 near the boundary. Then we can define, for $r \in [\frac{1}{2}, 1]$ 
 (the boundary of $M_2$ corresponds to $r=1$)
$$
  \sigma_{2,\eps}(r) = (\beta_{2,\eps}(r), \alpha_{2,\eps}(r) ) = 
  U (\phi_{2,\eps})(r).
$$

 The $L^2$-norm, for a $p$-form on $M_1$ supported in the cone $\mcC_{\eps,1}$, 
 has the expression
\begin{equation*}
 \| \phi_\eps \|^2_{L^2(M_{\eps}) } = \int_{M_1 (\eps)} | \sigma_{1,\eps} |^2 d \mu_{g_1}
     + \int_{M_2} | \phi_{2,\eps} |^2 d \mu_{g_2}
\end{equation*}
 and the quadratic form on our study is
\begin{equation}\label{quadform}
\begin{split}
 q_{\eps} (\phi_{\eps}) &= \int_{M_\eps} |(d + d^{\ast}) \phi_\eps |^2_{g_{\eps}} \, 
            d \mu_{g_{\eps}}  \\
   &= \int_{M_1(\eps)} | UD_1U^{\ast} (\sigma_{1,\eps})|^2 \, d \mu_{g_1} + 
       \frac{1}{\eps^2} \int_{M_2} |D_2(\phi_{2,\eps})|^2 \, d \mu_{g_2}. 
\end{split}
\end{equation}
 The compatibility condition is, for the quadratic form,
 $ \eps^{\frac{1}{2}} \alpha_{1,\eps} (\eps) =\alpha_{2,\eps}(1)$ and 
 $\eps^{\frac{1}{2}} \beta_{1,\eps} (\eps) = \beta_{2,\eps}(1),$ or 
\begin{equation}\label{recol0}
  \sigma_{2,\eps}(1) = \eps^{\frac{1}{2}} \sigma_{1,\eps}(\eps).
\end{equation}
 The compatibility condition for the Hodge-de Rham operator, 
 of the first order, is obtained by expressing that
 $ D \phi_\eps \sim (UD_1U^{\ast} \sigma_{1,\eps}, {\eps}^{-1} UD_2U^{\ast}
  \sigma_{2,\eps})$  belongs to the domain of $D$. 
  In terms of $\sigma$, it gives 
\begin{equation}\label{recol1}
 \sigma^{\prime}_{2,\eps}(1) = {\eps}^{\frac{3}{2}} \sigma^{\prime}_{1,\eps}(\eps).
\end{equation}
 To understand the limit problem, we proceed to several estimates.

 \subsection{Expression of the quadratic form.}

For any $\phi$ such that the component $\phi_{1}$ is supported in the cone
  $\mcC_{\eps, 1}$, one has, with  $\sigma_1 = U (\phi_1)$ and 
  by the same calculus as in \cite{ACP}: 
\begin{align*}
  \int_{\mcC_{\eps,1} } | D_1 \phi|^2 \, d \mu_{g_\eps}
    &= \int_\eps^1 \left\| \Bigl( \partial_r + \frac{1}{r} A \Bigr) 
            \sigma_1 \right\|^2_{L^2(\Sigma)} \, dr \\
    &= \int_\eps^1  \Bigl[\, \| \sigma^{\prime}_1 \|^2_{L^2(\Sigma)} + 
      \frac{2}{r} \big( \sigma^{\prime}_1, \, A \sigma_1 \big)_{L^2(\Sigma)} 
      + \frac{1}{r^2} \| A \sigma_1 \|^2_{L^2(\Sigma)} \, \Bigr] dr.
\end{align*}

 \subsection{Limit problem}\label{limprob} 
 As a Hilbert space, we introduce
\begin{equation}\label{limitpb}
  \mcH_{\infty} := L^2 ( \overline{M}_1) \oplus \Ker (\widetilde{D_2})
         \oplus \mcI_{\frac{1}{2}}
\end{equation}
 with the space $\mcI_{\frac{1}{2}}$ defined in \eref{I1/2}, 
 and as the limit operator
  $$ \Delta_{1,W} \oplus  0 \oplus 0$$ 
 with $W$ defined in  (\ref{defW}).

 Finally, let us define

\subs{a cut-off function} $\xi_1$ on $M_1$ around the conical singularity:
\begin{equation}\label{coupure}
   \xi_1 (r) =
  \begin{cases}
    \ 1   &  \text{ if }  0 \leq  r \leq \frac{1}{2}, \\
    \ 0   &  \text{ if }  1 \leq  r,
  \end{cases}
\end{equation} 

 \subs{the prolongation operator}
\begin{align}\label{Peps}
 P_{\eps} :  H^{\frac{1}{2}}(\Sigma)  &\longrightarrow  
      H^1 (\mcC_{\eps,1} ) \\
  \sigma = \sum_{ \begin{subarray}{c} \gamma \in \Spec(A) \\ 
              \end{subarray} } \sigma_{\gamma} 
    &\mapsto   P_{\eps}(\sigma) = U^{\ast} \Big( 
   \sum_{ \begin{subarray}{c} \gamma \in \Spec(A) \end{subarray} }
    {\eps}^{\gamma- \frac{1}{2}} r^{-\gamma} \sigma_{\gamma} \Big).
     \nonumber
\end{align}
  We remark that, restricted on $\Im (\Pi_{> \frac{1}{2}})$,
  $P_{\eps}(\sigma)$ is the transplanted on $M_1 (\eps)$ 
 of $P_2(\sigma)$ (see Section 2.3), then there exists a constant $C >0$ 
 such that, for all $\sigma\in\Im (\Pi_{> \frac{1}{2}})$
\begin{equation}\label{contL2}
  \| P_2 (\sigma) \|^2_{L^2 ( \mcC_{1, \frac{1}{\eps}}) } =
  \| P_{\eps} (\sigma) \|^2_{ L^2( \mcC_{{\eps},1})}
  \leq C \sum_{\gamma > \frac{1}{2}} \| \sigma_{\gamma} \|^2_{L^2(\Sigma)} 
   = C \| \sigma \|^2_{ L^2 (\Sigma) }
\end{equation}
 and also that, if $\psi_2 \in \Dom (\mcD_2),$ then 
 $ \Big( {\xi_1} P_{\eps} (U ({\psi_2} \rest_{\Sigma}) ), \, \psi_2 \Big)$ 
  defines an element of $H^1(M_{\eps}).$

  \section{Proof of the spectral convergence.}

 We denote by $\lambda_N(\eps), N \geq 1,$ the spectrum of the total 
 Hodge-de Rham operator of $M_{\eps}$ and by $\lambda_N , N \geq 1,$ 
 the spectrum of the limit operator defined in Section \ref{limprob}. 
\subsection{Upper bound: 
  $\displaystyle \limsup_{\eps \to 0} \lambda_N(\eps) \leq \lambda_N$.}
 \label{upper}
 With the min-max formula, which says that
\begin{equation*}
 \lambda_N (\eps) = \underset{ \overset{E \subset \Dom (D_\eps) }
     {\dim E=N} }{\inf } \Big\{ \underset{\overset{\phi \in E}{
     \| \phi \| = 1 } }{ \sup } \int_{M_{\eps}} |D_{\eps} \phi|^2_{g_{\eps}}
     \,  d \mu_{g_{\eps}}  \Big\},
\end{equation*}
 we have to describe how transplant eigenforms of the limit 
 problem on $M_{\eps}$.

 We describe this transplantation term by term. 
 For the first term, we use the same ideas as in \cite{ACP}.

 For an eigenform $\phi$ of $\Delta_{1,W}$ corresponding to the eigenvalue 
 $\lambda$, $U(\phi)$ can be decomposed on an orthonormal base 
 $\{ \sigma_\gamma \}_{\gamma}$ of eigenforms of $A$ and 
 each component can be expressed by the Bessel functions. 
 For $\gamma \in (- \frac{1}{2}, \frac{1}{2}),$  it has the form
\begin{equation*}
  \Big\{ c_{\gamma} r^{\gamma +1} F_{\gamma} (\lambda r^2) + 
    d_{\gamma} r^{-\gamma} G_{\gamma} (\lambda r^2) \Big\} \sigma_{\gamma}
\end{equation*}
 where $F_{\gamma}, G_{\gamma}$ are entire functions satisfying 
 $F_{\gamma}(0) = G_{\gamma}(0)=1$ and 
 $c_{\gamma}, d_{\gamma}$ are constants.

 We remark that 
 $c_{\gamma} r^{\gamma +1} F_{\gamma} (\lambda r^2) \sigma_{\gamma}
  \in \Dom(D_{1,\min})$ 
 and also that
 $d_{\gamma} r^{-\gamma} \big( G_{\gamma}(\lambda r^2) - G_{\gamma}(0) \big)
  \sigma_{\gamma} \in \Dom(D_{1,\min})$.
 So we can write $\phi = \phi_0 + \overline{\phi}$ with
\begin{gather*}   
  \phi_0  \in \Dom(D_{1,\min})  \quad \hbox{and} \\
  U ( \overline{\phi} ) (r) = \xi_1(r)  
  \dsum_{\begin{subarray}{c} \gamma \in \Spec(A) \\ |\gamma|< \frac{1}{2}
         \end{subarray}} 
      d_{\gamma}r^{-\gamma} \sigma_{\gamma}.
\end{gather*}
 By the definition of $D_{1,\min},$  $\phi_0 $ can be approached, 
  with the operator norm, by a sequence of smooth forms $\phi_{0,\eps}$ with 
compact  support in $M_1(\eps).$

 By the definition of $W$, we know that 
 $\dsum_{|\gamma|<\frac{1}{2}} d_{\gamma} \sigma_{\gamma}\in W.$
 So there exists 
 $\phi_{2, \gamma} \in \Ker (D_2)$ such that
 $U ( \phi_{2,\gamma}(1) ) - d_{\gamma} \sigma_\gamma 
   \in \Im (\Pi_{>\gamma}).$
 We remark finally that, by the definition (\ref{Peps}), we can write
 $ U (\overline{\phi}) (r) = \xi_1(r) \dsum_{|\gamma|<\frac{1}{2}}
    {\eps}^{ \frac{1}{2} -\gamma}  P_{\eps} (d_{\gamma} \sigma_{\gamma}).$

 Let $\phi_{2,\eps} = \dsum_{|\gamma|< \frac{1}{2}} 
    {\eps}^{ \frac{1}{2} - \gamma} \phi_{2,\gamma}$
 and
\begin{equation*}
\begin{split}
 \phi_{\eps} &= \Big( \phi_{0,\eps} + \xi_1 P_{\eps} \Big( 
    \dsum_{\begin{subarray}{c} \gamma \in \Spec(A) \\ 
      |\gamma|<\frac{1}{2}\end{subarray}}
    {\eps}^{\frac{1}{2}-\gamma} U (\phi_{2,\gamma}(1))  \Big), 
    \phi_{2,\eps} \Big) \in H^1(M_\eps).
\end{split}
\end{equation*}
 It is a good transplantation:
  $\|\phi_{2,\eps}\|\to 0$ as the term added on $M_1 (\eps)$ 
(indeed, a term of the sum 
 $\xi_1 {\eps}^{\frac{1}{2}-\gamma} P_{\eps} 
   (U \phi_{2,\gamma}(1) -  d_{\gamma} \sigma_{\gamma})$ 
 corresponds to some $\gamma^{\prime} >\gamma,$ 
 if $\gamma^{\prime} > \frac{1}{2}$, by (\ref{contL2}),  
   it is  $O ({\eps}^{ \frac{1}{2}-\gamma})$, 
 if $\gamma^{\prime} <  \frac{1}{2}$, 
   it is  $O(\eps^{\gamma^{\prime} -\gamma})$ 
 and if $\gamma^{\prime} = \frac{1}{2}$, it is 
  $O( \eps^{\frac{1}{2}- \gamma} \sqrt{ |\log \eps|} )$).
 Moreover they are harmonic, up to $\xi_1$.

 For the two last ones, we shrink the infinite cone on $M_1$ and 
 cut with the function $\xi_1,$ already defined in (\ref{coupure}).

 Finally, if $\Ker( A- \frac{1}{2}) \neq \{ 0 \},$  
 for each non-zero element 
  $[ \overline{\sigma}^{\frac{1}{2}} ] \in \mcI_{ \frac{1}{2}}, $
 there exists  $\psi_2$ with $ D_2 (\psi_2) = 0$ on $M_2$ and 
 the boundary value  $\overline{\sigma}^{\frac{1}{2}}$ modulo 
 $\Im (\Pi_{> \frac{1}{2}} ).$ 
 Then, we can construct a {\em quasi-mode} as follows:
\begin{equation}\label{1/2}
  \psi_{\eps} := | \log {\eps}|^{- \frac{1}{2}} \Big( {\xi_1}. 
    \big\{ r^{- \frac{1}{2}} U^{\ast}  (\overline{\sigma}^{\frac{1}{2}}) + 
    P_{\eps} ( U ( \psi_2) \rest_{\Sigma} 
     - \overline{\sigma}^{\frac{1}{2}} ) \big\}, \, \psi_2 \Big)
\end{equation}
 The $L^2$-norm of this element is uniformly bounded from above 
 and below, and 
\begin{equation*}
  \dlim_{ {\eps}\to 0} \| \psi_{\eps} \|_{L^2(M_{\eps})}
    = \| \overline{\sigma}^{\frac{1}{2}} \|_{L^2(\Sigma)}.
\end{equation*}
 Moreover, it satisfies 
  $q (\psi_{\eps}) = O( | \log {\eps} |^{-1})$ 
  giving then a `small eigenvalue',
 as well as the elements of $\Ker (\mcD_2)$ and of $\Ker (\Delta_{1,W})$.

 [{\it n.b.} It is remarkable that the same construction, 
 for an extended solution with corresponding boundary value in 
  $\Ker (A- \gamma), \, \gamma \in (- \frac{1}{2}, \frac{1}{2})$ 
 does not give a quasi-mode: 
 indeed if $\psi_2$ is such a solution, the transplanted element will be 
\begin{equation*}
\begin{split}
 \psi_{\eps} &= \Big( {\xi_1}. \big\{ r^{-\gamma} U^{\ast} 
   (\overline{\sigma}^{\gamma} ) + {\eps}^{\frac{1}{2}- \gamma} P_{\eps}
    ( U (\psi_2) \rest_{\Sigma} - \overline{\sigma}^{\gamma} ) \big\}, \ 
   {\eps}^{\frac{1}{2} - \gamma} \psi_2 \Big), 
\end{split}
\end{equation*}
 for which $q (\psi_{\eps})$ does not converge to $0$ as $\eps \to 0.$ ]

 To conclude the estimate of upper bounds, we have only to verify 
 that these transplanted forms have a Rayleigh-Ritz quotient comparable to 
 the initial one and that the orthogonality is fast conserved by transplantation.

\subsection{Lower bound : 
 $\displaystyle \liminf_{\eps \to 0} \lambda_N(\eps) \geq \lambda_N$.}
 We first proceed for one index. 
 We know, by Section \ref{upper}, that for each $N,$ the family 
 $\{ \lambda_N (\eps) \}_{\eps >0}$ is bounded, set
$$
  \lambda := \liminf_{\eps \to 0} \lambda_N (\eps).
$$
 There exists a sequence $\{ \eps_i \}_{i \in \N }$ such that 
 $ \dlim_{i \to \infty} \lambda_N (\eps_i) = \lambda. $
 Let, for each $i$, $\phi_{i}$ be a normalized eigenform 
 relative to
  $\lambda_i = \lambda_N (\eps_i).$

\subsubsection{On the regular part of $\overline{M_1}$.}

\begin{lem}\label{l0}
 For our given family $\phi_{i}$, the family 
 $\{ (1-\xi_1). \phi_{1,i} \}_{i \in \N }$ is bounded in 
 $H^1_0(M_1(0), g_1).$
\end{lem}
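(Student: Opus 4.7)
The plan is to combine an energy bound coming from the eigenvalue equation with the standard interior elliptic estimate for the Gauss-Bonnet operator $D_1$ on a fixed smooth compact region containing the support of $(1-\xi_1)\phi_{1,i}$. By \eref{coupure}, $1-\xi_1$ is supported in $\{r \geq 1/2\}$, which defines a fixed compact set $K \subset M_1(0)$ bounded away from the conical singularity at $r=0$. On $K$, the ambient metric $g_\eps$ agrees with $g_1$ for every $\eps < 1/2$, and $(1-\xi_1)\phi_{1,i}$, extended by zero across the singular tip, defines an element of $H^1_0(M_1(0),g_1)$ with support in $K$. The normalization $\|\phi_i\|_{L^2(M_{\eps_i})}=1$ together with $0 \leq 1-\xi_1 \leq 1$ immediately yields $\|(1-\xi_1)\phi_{1,i}\|_{L^2(M_1(0),g_1)} \leq 1$.

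For the derivative bound, I would write the commutator identity
\[
 D_1\bigl((1-\xi_1)\phi_{1,i}\bigr)
   = (1-\xi_1)\, D_1\phi_{1,i} + [D_1,\, 1-\xi_1]\,\phi_{1,i},
\]
where $[D_1,\,1-\xi_1]$ is the zeroth-order operator $-d\xi_1 \wedge \cdot + \iota_{\nabla \xi_1}(\cdot)$, bounded uniformly by $\|d\xi_1\|_{L^\infty}$. From \eref{quadform} and the upper bound $\lambda_i \leq C$ already established in \sref{upper},
\[
 \|D_1 \phi_{1,i}\|^2_{L^2(M_1(\eps_i),g_1)} \leq q_{\eps_i}(\phi_i) = \lambda_i \leq C,
\]
and the triangle inequality then gives $\|D_1((1-\xi_1)\phi_{1,i})\|_{L^2(M_1(0),g_1)} \leq C'$.

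It remains to upgrade this to an $H^1$ bound. Since $D_1 = d + d^\ast$ is a first-order elliptic operator and the support of $(1-\xi_1)\phi_{1,i}$ lies in the fixed compact smooth region $K$, Gårding's inequality gives
\[
 \|\psi\|^2_{H^1(M_1(0), g_1)}
   \leq C_K \bigl(\|D_1\psi\|^2_{L^2} + \|\psi\|^2_{L^2}\bigr)
\]
for every $\psi \in H^1_0(M_1(0), g_1)$ with $\operatorname{supp}\psi \subset K$, where $C_K$ depends only on $K$ and on $g_1\rest_K$. Applying this with $\psi = (1-\xi_1)\phi_{1,i}$ and combining with the preceding $L^2$ and $D_1$-bounds yields the lemma. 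The one thing to monitor is that the elliptic constant remains uniform in $i$; this is automatic because $K$ avoids the singular locus and sits in a region whose geometry is fixed and independent of $\eps_i$, so no genuine obstacle arises beyond the standard interior regularity theory.
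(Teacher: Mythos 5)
Your proof is correct and fills in the standard argument that the paper leaves implicit (the paper states Lemma \ref{l0} without a written proof). The three ingredients you use — the $L^2$ normalization, the uniform bound $\|D_1\phi_{1,i}\|_{L^2}^2\leq q_{\eps_i}(\phi_i)=\lambda_i\leq C$ from \eref{quadform} and the upper bound of \sref{upper}, the commutator identity together with the interior elliptic (G{\aa}rding) estimate on the fixed compact region $\{r\geq\frac{1}{2}\}$ where $g_{\eps}$ equals $g_1$ — are exactly what is needed, and the observation that the elliptic constant is uniform because the support avoids both the singularity and the $\eps$-dependent gluing region is the key point to make.
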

 Then it remains to study $\xi_1. \phi_{1,i}$ which can be expressed 
 with the polar coordinates. 
 We remark that the quadratic form of these forms is uniformly bounded.


\subsubsection{Estimates of the boundary term.} 

 The expression above can be decomposed with respect to the eigenspaces 
 of $A$; in the following calculus, we suppose that $\sigma_1(1)=0$:
\begin{equation*}
\begin{split} 
  \int_\eps^1 \Bigl[\, & \| \sigma^{\prime}_1 \|^2_{L^2(\Sigma)} +
    \frac{2}{r} (\sigma^{\prime}_1, \, A \sigma_1)_{L^2(\Sigma)}
           + \frac{1}{r^2} \| A \sigma_1 \|^2_{L^2(\Sigma)} \,\Bigr] \, dr \\
   &= \int_\eps^1 \Bigl[\, \| \sigma^{\prime}_1 \|^2_{L^2(\Sigma)} + \partial_r 
       \Bigl( \frac{1}{r} (\sigma_1, \, A \sigma_1)_{L^2(\Sigma)} \Bigr)
       + \frac{1}{r^2} \Big\{ (\sigma_1, \, A \sigma_1)_{L^2(\Sigma)} 
       + \| A \sigma_1 \|^2_{L^2(\Sigma)} \Big\} \,\Bigr]  \, dr \\
   &= \int_{\eps}^1 \Bigl[\, \| \sigma^{\prime}_1 \|^2_{L^2(\Sigma)} +
       \frac{1}{r^2} ( \sigma_1, \ (A + A^2) \sigma_1)_{L^2(\Sigma)}
       \,\Bigr] \, dr - \frac{1}{\eps} 
        ( \sigma_1(\eps), A \sigma_1(\eps) )_{L^2(\Sigma)}.
\end{split}
\end{equation*}
 This shows that the quadratic form controls the boundary term, 
 if the operator $A$ is negative but $(A + A^2)$ is  non-negative. 
 The latter condition is satisfied exactly on the orthogonal complement 
 of the spectral space corresponding to the interval $(-1, 0)$. 
 By applying $\xi_1.\phi_{1,i}$ to this fact,  
 we obtain the following lemma:
\begin{lem}\label{l1}
 Let $\Pi_{\leq -1}$ be the spectral projection of the operator $A$ 
  relative to the interval $(-\infty, -1]$. 
 There exists a constant $C>0$ such that, for any $i \in \N$
$$
 \| \Pi_{\leq-1} \circ U (\phi_{1,i}(\eps_i)) 
   \|_{H^{\frac{1}{2}} (\Sigma)} \leq C \sqrt{\eps_i}.
$$
\end{lem}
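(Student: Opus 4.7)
The plan is to apply the integration-by-parts identity displayed immediately before the lemma to $\sigma_1 := U(\xi_1\phi_{1,i})$ and then project onto the spectral subspace of $A$ corresponding to eigenvalues $\leq -1$. Since $\xi_1(1)=0$ while $\xi_1\equiv 1$ for $r\leq\tfrac{1}{2}$, the hypothesis $\sigma_1(1)=0$ required by the identity is satisfied, and $\sigma_1(\eps_i)=U(\phi_{1,i}(\eps_i))$ is precisely the quantity to be estimated. The first substep is to show that the cone part of $q_{\eps_i}(\xi_1\phi_{1,i})$ is uniformly bounded: from the Leibniz-type estimate
\[
 \|D_1(\xi_1\phi_{1,i})\|_{L^2(M_1(\eps_i))} \leq \|\xi_1 D_1\phi_{1,i}\|_{L^2(M_1(\eps_i))} + \|d\xi_1\|_\infty \|\phi_{1,i}\|_{L^2(M_1(\eps_i))},
\]
together with $\|\phi_i\|_{L^2}=1$ and the min-max upper bound of Section \ref{upper} which gives $\lambda_i\leq C_N$ for all $i$, one obtains $q_{\eps_i}(\xi_1\phi_{1,i})\leq C$ uniformly in $i$.

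Next, because $A$ acts only in the $\Sigma$-variables, it commutes with $\partial_r$ and with its own spectral projector $\Pi_{\leq -1}$. The identity of the excerpt therefore carries over verbatim when one replaces $\sigma_1$ by $\sigma_1^{-} := \Pi_{\leq -1}\sigma_1$, with the right-hand side still bounded by $C$ since the orthogonal spectral decomposition is compatible with each of the three quadratic forms that appear. Now on $\mathrm{Im}(\Pi_{\leq -1})$ the operator $A$ satisfies $A\leq -1$, so $A+A^2=A(A+1)\geq 0$ and $-A\geq 1$. The integral term in the identity is thus non-negative and can be discarded, leaving
\[
 \tfrac{1}{\eps_i}\bigl(\sigma_1^{-}(\eps_i),\,|A|\sigma_1^{-}(\eps_i)\bigr)_{L^2(\Sigma)} \leq C,
\]
i.e.\ $(\sigma_1^{-}(\eps_i), |A|\sigma_1^{-}(\eps_i))_{L^2(\Sigma)} \leq C\eps_i$.

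Finally, since $A$ is a first-order self-adjoint elliptic operator on the compact manifold $\Sigma$ (built from $D_0=d_0+d_0^{\ast}$ plus bounded zero-order terms, cf.\ the discussion around (\ref{vpA})), the quadratic form $(\sigma,(1+A^2)^{1/2}\sigma)_{L^2}$ is equivalent to $\|\sigma\|^2_{H^{1/2}(\Sigma)}$. On $\mathrm{Im}(\Pi_{\leq -1})$ one has $(1+A^2)^{1/2}\leq\sqrt{2}\,|A|$, hence
\[
 \|\Pi_{\leq -1}\circ U(\phi_{1,i}(\eps_i))\|^2_{H^{1/2}(\Sigma)} \leq C_1 \bigl(\sigma_1^{-}(\eps_i),|A|\sigma_1^{-}(\eps_i)\bigr)_{L^2(\Sigma)} \leq C_2\,\eps_i,
\]
which is the claimed bound. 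The only real obstacle is the first substep: checking that applying the cut-off $\xi_1$ does not destroy the bound on the quadratic form. This reduces to a one-line commutator estimate. Once $q_{\eps_i}(\xi_1\phi_{1,i})=O(1)$ is in hand, the remainder is bookkeeping with the spectral decomposition of $A$ and standard first-order elliptic Sobolev equivalences on $\Sigma$.
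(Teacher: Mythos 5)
Your proof is correct and follows the same route the paper indicates: apply the integration-by-parts identity to $\sigma_1=U(\xi_1\phi_{1,i})$ (so that $\sigma_1(1)=0$ while $\sigma_1(\eps_i)=U(\phi_{1,i}(\eps_i))$), restrict to $\operatorname{Im}(\Pi_{\leq -1})$ where $A\leq -1$ makes the bulk term non-negative and turns the boundary term into $\tfrac{1}{\eps_i}(\sigma_1^-(\eps_i),|A|\sigma_1^-(\eps_i))$, and conclude via ellipticity of $A$ that this dominates $\tfrac{1}{\eps_i}\|\sigma_1^-(\eps_i)\|^2_{H^{1/2}}$. The paper leaves the cut-off estimate, the spectral-orthogonality of the cone quadratic form, and the $H^{1/2}$-equivalence implicit; you have merely spelled them out.
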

 In view of Proposition \ref{D2}, we want also a control of the components 
 of $\sigma_1$ associated with the eigenvalues of $A$ in $(-1, \frac{1}{2} ]$. 
  The number of these components is finite and we can work term by term.
 So we write, on $\mcC_{\eps,1},$
$$ \sigma_1(r) = \sum_{\gamma \in \Spec(A)} {\sigma_1}^{\gamma}(r) \ 
   \text{ with } \  A {\sigma_1}^{\gamma} (r) = \gamma {\sigma_1}^{\gamma} (r)
$$
 and we suppose again $\sigma_1(1)=0.$  From the equation 
 $ ( \partial_r + \frac{A}{r} ) \sigma^{\gamma}_1 
  = r^{- \gamma} \partial_r ( r^{\gamma} \sigma^{\gamma}_1 )$ and 
  the Cauchy-Schwarz inequality, it follows that 
\begin{equation*}
\begin{split}
 \left\| {\eps}^{\gamma} \sigma^{\gamma}_1(\eps) \right\|^2_{L^2 (\Sigma)}
   &= \Big \|  \int_{\eps}^1 \partial_r (r^\gamma \sigma_1^{\gamma}) 
         \, dr \Big\|_{L^2 (\Sigma)}^2  \\
   &\le \left\{  \int_{\eps}^1 \Big\| r^{\gamma} \cdot ( \partial_r +
        \frac{1}{r} A  ) \sigma_1^{\gamma}(r) \Big\|_{L^2 (\Sigma)} \, 
            dr \right\}^2  \\
   &\leq \int_{\eps}^1 r^{2 \gamma} dr \cdot \int_{\eps_i}^1 
       \Big\| \partial_r (\sigma_1^\gamma) + \frac{\gamma}{r}
            (\sigma_1^{\gamma} ) \Big\|^2_{L^2 (\Sigma)} dr. 
\end{split}
\end{equation*}
 Thus, if the quadratic form is bounded, there exists a constant $C>0$ 
 such that
\begin{equation}
 \| \sigma_{1}^{\gamma} (\eps) \|^2_{L^2 (\Sigma)} \leq 
 \begin{cases}
   C {\eps}^{-2 \gamma} \dfrac{1- {\eps}^{2\gamma+1} }{ 2\gamma +1}
       & \hbox{ if }   \gamma \neq - \frac{1}{2}, \\
   C {\eps} |\log \eps| & \hbox{ if } \gamma= - \frac{1}{2}.
\end{cases}
\end{equation}
 This gives
\begin{lem}\label{l2}
 Let $\Pi_{I}$ be the spectral projection of the operator $A$ 
 relative to the interval $I$. There exist constants $\alpha, C>0$ 
 such that, for any $i \in \N$
$$
 \| \Pi_{(-1,0)} \circ U( \phi_{1,i}(\eps_i) ) 
   \|_{H^{\frac{1}{2}} (\Sigma)} \leq  C \eps_i^{\alpha}.
$$
\end{lem}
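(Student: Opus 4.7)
The plan is to read Lemma \ref{l2} as an almost immediate bookkeeping consequence of the pointwise inequality displayed just above its statement, combined with two structural observations about $\Spec(A)$. I would apply that inequality to $\xi_1 \phi_{1,i}$ rather than to $\phi_{1,i}$, so that the hypothesis $\sigma_1(1) = 0$ used in its derivation is satisfied; since $\xi_1 \equiv 1$ in a neighbourhood of $r = \eps_i$, the boundary value at $r = \eps_i$ is unchanged by the cutoff. The quadratic form $q_\eps(\xi_1 \phi_{1,i})$ stays uniformly bounded in $i$ because $\phi_{1,i}$ is normalized, $\lambda_N(\eps_i)$ is bounded thanks to the $\limsup$ already established in Section \ref{upper}, and an IMS-type commutator estimate shows that multiplication by the smooth compactly-supported cutoff $\xi_1$ adds only an $O(1)$ perturbation.

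Next I split $\gamma \in \Spec(A) \cap (-1, 0)$ into three sub-cases and read off decay rates directly from the displayed bound. For $\gamma \in (-1, -\tfrac{1}{2})$ the exponent $2\gamma+1$ is negative, so $(1 - \eps^{2\gamma+1})/(2\gamma+1) = O(\eps^{2\gamma+1})$ and therefore $\|\sigma_1^\gamma(\eps)\|_{L^2(\Sigma)}^2 = O(\eps)$. For $\gamma = -\tfrac{1}{2}$ the direct estimate gives $O(\eps |\log \eps|)$, which is $O(\eps^{1-\delta})$ for any $\delta > 0$. For $\gamma \in (-\tfrac{1}{2}, 0)$ the ratio $(1-\eps^{2\gamma+1})/(2\gamma+1)$ is bounded and so $\|\sigma_1^\gamma(\eps)\|_{L^2(\Sigma)}^2 = O(\eps^{-2\gamma}) = O(\eps^{2|\gamma|})$. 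In every case this is $O(\eps^{2\alpha_\gamma})$ for some strictly positive $\alpha_\gamma$ depending only on $\gamma$.

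The second structural observation is that $\Spec(A) \cap (-1, 0)$ is finite: by \eref{vpA} the branches $\tfrac{(-1)^{p+1}}{2} \pm \sqrt{\mu^2 + ((n-1)/2-p)^2}$ fall into any bounded interval for only finitely many coexact eigenvalues $\mu^2$ of $\Sigma$, and the topological eigenvalues $\pm(p - n/2)$ form a finite set. Consequently I may take $\alpha := \min_{\gamma} \alpha_\gamma > 0$, getting $\|\sigma_1^\gamma(\eps_i)\|_{L^2(\Sigma)} \leq C \eps_i^\alpha$ uniformly in $i$ and in $\gamma \in \Spec(A) \cap (-1, 0)$. Finiteness also forces $\Pi_{(-1,0)}(L^2(\Sigma))$ to be finite-dimensional (each eigenspace of $A$ on the closed manifold $\Sigma$ being finite-dimensional), so the $H^{\frac{1}{2}}$ and $L^2$ norms are equivalent there; summing the $L^2$ bounds over the finitely many $\gamma$ yields the required $H^{\frac{1}{2}}$ estimate. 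The only real subtlety I expect is the uniform bound on $q_\eps(\xi_1 \phi_{1,i})$, which mirrors the cutoff argument already needed to justify Lemma \ref{l0}.
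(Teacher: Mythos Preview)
Your proposal is correct and follows essentially the same approach as the paper: the paper derives the displayed pointwise inequality for $\|\sigma_1^\gamma(\eps)\|^2_{L^2(\Sigma)}$, remarks that the number of eigenvalues in the relevant interval is finite so one can work term by term, and then simply writes ``This gives'' before stating the lemma, specifying afterwards that $0<\alpha<\tfrac{1}{2}$ is chosen so that $-\alpha$ exceeds every negative eigenvalue of $A$ (in the interval). Your case analysis, the choice $\alpha=\min_\gamma\alpha_\gamma$, and the observation that the $L^2$ and $H^{1/2}$ norms are equivalent on the finite-dimensional range of $\Pi_{(-1,0)}$ are exactly the details the paper leaves implicit.
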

 Here, $0 < \alpha < \frac{1}{2}$ satisfies that $- \alpha$ is larger 
 than any negative eigenvalue of $A$. 

 With the compatibility condition (\ref{recol0}) 
 and the ellipticity of $A$, the estimate above gives also
\begin{lem}\label{l3}
 With the same notation, there exist constants $\beta, C >0$ such that, 
 for any $i\in\N$ 
$$
  \| \Pi_{[0, \frac{1}{2})} \circ  U(\phi_{2,i}(1)) 
  \|_{H^{\frac{1}{2}} (\Sigma)}  \leq C {\eps_i}^\beta.
$$
\end{lem}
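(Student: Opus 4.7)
The plan is to transfer, eigenspace-by-eigenspace, the $L^2$ bounds on $\sigma_{1,i}(\eps_i)$ already established inside the proof of \lref{l2} across the gluing region via the compatibility condition (\ref{recol0}), and then to upgrade from $L^2$ to $H^{\frac{1}{2}}$ using that the relevant range is finite-dimensional. Since $A$ is elliptic and self-adjoint on the closed manifold $(\Sigma,h)$, its spectrum is discrete, so $\Spec(A)\cap[0,\tfrac12)$ is a finite set and the image of $\Pi_{[0,\frac{1}{2})}$ is a finite-dimensional subspace of smooth sections; on it, $L^2$ and $H^{\frac{1}{2}}$ norms are equivalent. Thus it suffices to bound $\| \Pi_{[0,\frac{1}{2})} \circ U(\phi_{2,i}(1))\|_{L^2(\Sigma)}$.

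The second step is to reuse the computation in the proof of \lref{l2}: applied to $\xi_1\cdot\phi_{1,i}$ (for which the boundary value at $r=1$ vanishes and whose quadratic form is uniformly bounded), it gives, for every $\gamma\in\Spec(A)$ with $\gamma\neq -\tfrac12$,
\begin{equation*}
  \|\sigma_{1,i}^{\gamma}(\eps_i)\|^2_{L^2(\Sigma)} \leq
     C\,\eps_i^{-2\gamma}\,\frac{1-\eps_i^{2\gamma+1}}{2\gamma+1}.
\end{equation*}
When $\gamma\in[0,\tfrac12)$, the last factor is uniformly bounded, so $\|\sigma_{1,i}^{\gamma}(\eps_i)\|^2_{L^2(\Sigma)}\leq C'\,\eps_i^{-2\gamma}$.

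The third step is to project the compatibility condition (\ref{recol0}) onto the $\gamma$-eigenspace of $A$, which gives $\sigma_{2,i}^{\gamma}(1)=\eps_i^{\frac{1}{2}}\,\sigma_{1,i}^{\gamma}(\eps_i)$, hence
\begin{equation*}
  \|\sigma_{2,i}^{\gamma}(1)\|^2_{L^2(\Sigma)} \leq C'\,\eps_i^{\,1-2\gamma}.
\end{equation*}
Summing over the \emph{finitely many} $\gamma\in\Spec(A)\cap[0,\tfrac12)$, and setting
\begin{equation*}
 2\beta\;:=\;\min\bigl\{\,1-2\gamma\;\bigm|\;\gamma\in\Spec(A)\cap[0,\tfrac12)\,\bigr\}\;>\;0
\end{equation*}
(the minimum being attained and strictly positive because the set is finite and every $\gamma$ is strictly less than $\tfrac12$), we obtain the desired $L^2$ bound, and the equivalence of norms noted in the first paragraph then yields the $H^{\frac12}$ estimate with this $\beta$.

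The only point requiring some care is that the identity used in the second step holds for the cut-off section $\xi_1\cdot\phi_{1,i}$ rather than $\phi_{1,i}$ itself, but this is exactly the object already handled throughout the lower-bound argument, and the contribution of $(1-\xi_1)\phi_{1,i}$ to $\sigma_{1,i}(\eps_i)$ is zero. No genuine obstacle arises; the argument is essentially a bookkeeping exercise combining \lref{l2}'s estimate, the rescaling factor $\eps^{\frac{1}{2}}$ from the matching, and the finiteness of $\Spec(A)\cap[0,\tfrac12)$.
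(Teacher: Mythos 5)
Your proof is correct and follows essentially the same route as the paper's (which is only a one-line remark): you combine the estimate from the proof of Lemma \ref{l2} with the compatibility condition (\ref{recol0}), and you make explicit what ``ellipticity of $A$'' is used for, namely that $\Spec(A)\cap[0,\tfrac12)$ is finite and that $L^2$ and $H^{1/2}$ norms are equivalent on the corresponding finite-dimensional space of smooth eigensections. Your $\beta=\tfrac12-\gamma_{\max}$ also agrees with the value stated after Lemma \ref{l3}.
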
 
 Here, $\frac{1}{2} - \beta$ is the largest non-negative eigenvalue of 
 $A$ strictly smaller than $\frac{1}{2}$ (if there is no such a eigenvalue, 
 we put $\beta = \frac{1}{2}$).

 Finally, we study $\sigma_1^{\frac{1}{2}}$ for our family of forms
  (the parameter $i$ is omitted in the notation). 
 It satisfies, for $\eps_i < r < \frac{1}{2},$ the equation 
$$
  \Big( - \partial_r^2 + \frac{3}{4 r^2} \Big) \sigma_1^{\frac{1}{2}}
     = \lambda_i \sigma_1^{\frac{1}{2}}.
$$
 The solutions of this equation have expression in terms of the Bessel 
 and the Neumann functions: there exist entire functions $F, G$ with 
 $F(0)=G(0)=1$ and differential forms $c_{i}, \, d_{i}$ 
 in $\Ker (A- \frac{1}{2})$ such that
\begin{equation}\label{sigma1/2}
\begin{split}
  \sigma_1^{\frac{1}{2}}(r) &= c_{i} r^{\frac{3}{2}} 
    F (\lambda_{i} r^2) + d_{i} \Big\{ r^{- \frac{1}{2}} 
    G( \lambda_{i} r^2) + \frac{2}{\pi} \log(r) 
    r^{ \frac{3}{2} } F(\lambda_i r^2) \Big\}
\end{split}
\end{equation}
 (cf.\ \cite{ACP}, Lemma $4$).
 The fact that the $L^2$-norm is bounded gives that 
  $\| c_{i} \|^2_{L^2} + |\log \eps_i| \, \| d_i \|^2_{L^2}$ 
 is bounded. 
 Finally, by reporting this estimate in the expression above, 
 we have 
$$
  \| \sigma_1^{\frac{1}{2}}(\eps_i) \|^2_{L^2 (\Sigma)}= 
     O \Big( \dfrac{1}{ \eps_i |\log \eps_i| } \Big).
$$
 With the compatibility condition (\ref{recol0}), we obtain 
\begin{lem}\label{l3b}
 There exists a constant $C>0$ such that, for any $i\in\N$
$$
 \| \Pi_{ \{ \frac{1}{2} \} } \circ U( \phi_{2,i})(1) 
    \|_{H^{\frac{1}{2}} (\Sigma)} \leq  \frac{C}{ \sqrt{ |\log \eps_i|} }.
$$
\end{lem}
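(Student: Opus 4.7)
The plan is to combine the compatibility condition \eref{recol0} with the $L^2$-decay estimate for $\sigma_1^{\frac{1}{2}}(\eps_i)$ that was already derived just above the statement from the Bessel-type expansion \eref{sigma1/2}. The argument is then purely algebraic, modulo an equivalence-of-norms remark on a finite-dimensional eigenspace.

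First, I would write \eref{recol0} at $\eps = \eps_i$, namely $\sigma_{2,\eps_i}(1) = \eps_i^{\frac{1}{2}}\, \sigma_{1,\eps_i}(\eps_i)$. Since the spectral projector $\Pi_{\{\frac{1}{2}\}}$ of $A$ acts fibrewise over $\Sigma$, it commutes with evaluation at a fixed value of $r$ and with the unitary change $U$; projecting both sides onto $\Ker(A-\tfrac{1}{2})$ therefore gives
$$
\Pi_{\{\frac{1}{2}\}}\circ U(\phi_{2,i})(1) \;=\; \eps_i^{\frac{1}{2}}\, \sigma_{1,i}^{\frac{1}{2}}(\eps_i).
$$
Taking $L^2(\Sigma)$-norms and inserting the estimate $\|\sigma_1^{\frac{1}{2}}(\eps_i)\|_{L^2(\Sigma)}^2 = O\!\bigl(1/(\eps_i |\log \eps_i|)\bigr)$ recorded right before the lemma, I obtain
$$
\|\Pi_{\{\frac{1}{2}\}}\circ U(\phi_{2,i})(1)\|_{L^2(\Sigma)}^{2} \;=\; \eps_i\cdot \|\sigma_{1,i}^{\frac{1}{2}}(\eps_i)\|_{L^2(\Sigma)}^{2} \;=\; O\!\left(\dfrac{1}{|\log \eps_i|}\right).
$$

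Finally, to upgrade from $L^2$ to $H^{\frac{1}{2}}$, I invoke the fact that $A$ is a first-order elliptic operator on the closed compact manifold $\Sigma$, so $\Ker(A - \tfrac{1}{2})$ is a finite-dimensional subspace of $C^\infty(\Lambda T^\ast \Sigma\oplus\Lambda T^\ast \Sigma)$. On a finite-dimensional space of smooth sections all Sobolev norms are equivalent, so the $L^2$-bound transfers to the required $H^{\frac{1}{2}}$-bound with a possibly enlarged constant $C$.

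I do not expect a real obstacle: the hard analytic content (the Bessel/Neumann expansion \eref{sigma1/2} and the consequent control $\|c_i\|^2_{L^2}+|\log\eps_i|\,\|d_i\|^2_{L^2} = O(1)$) is already in hand, and the lemma is just the transcription of that sharp logarithmic loss to the $M_2$-side through the matching identity. The only points to double-check are that $\Pi_{\{\frac{1}{2}\}}$ really commutes with the operations $U$ and $\,\cdot\,|_{r=\eps_i}$ (true by construction, since it is a spectral projector of a $\Sigma$-operator) and that the equivalence-of-norms constant on $\Ker(A-\tfrac{1}{2})$ is independent of $i$ (true because the space is fixed, independent of $i$).
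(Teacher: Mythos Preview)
Your proposal is correct and follows exactly the approach of the paper: the paper derives the bound $\|\sigma_1^{\frac{1}{2}}(\eps_i)\|_{L^2(\Sigma)}^2 = O(1/(\eps_i|\log\eps_i|))$ from the Bessel expansion \eref{sigma1/2} and the $L^2$-boundedness of the eigenforms, and then invokes the compatibility condition \eref{recol0} to obtain the lemma. Your explicit justification for passing from $L^2$ to $H^{\frac{1}{2}}$ via finite-dimensionality of $\Ker(A-\tfrac{1}{2})$ is the concrete version of what the paper (for the analogous Lemma~\ref{l3}) phrases as ``the ellipticity of $A$''.
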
 

\subsubsection{Convergence of $\phi_{2,i}$} \label{onM2}

 Let us now define, in general, $\wt{\phi}_{2,\eps}$ as the form 
obtained by the prolongation of $\phi_{2,\eps}$ by 
  $\sqrt{\eps} \xi_1(\eps r) \phi_{1,\eps}(\eps r)$
 on the infinite cone  $\mcC_{1,\infty}.$
 A change of variables gives that
\begin{equation*}
 \|\wt{\phi}_{2,\eps}\|_{L^2 (\mcC_{1,\infty})}
     = \|\xi_1\phi_{1,\eps}\|_{L^2 (\mcC_{\eps,1})},
\end{equation*}
 while
\begin{equation*}
\begin{split}
 \int_{\wt{M}_2} |\wt{D}_2(\wt{\phi}_{2,\eps})|^2 d \mu &= 
  \eps^2 \int_{\mcC_{\eps,1}} |D_1(\xi_1\phi_{1,\eps}) |^2 d \mu_{g_1}
  +  \int_{M_2} |D_2(\phi_{2,\eps})|^2 d \mu_{g_2}.
\end{split}
\end{equation*}
 Thus, by the definition of $\phi_i$, the family 
 $\{ \wt{\phi}_{2,i} \}_{i \in \N}$ is bounded in the space $\mathcal{W}$ 
 and
   $\dint_{\mcC_{1, \infty}} |\wt{D}_2 (\wt{\phi}_{2,i})|^2 d \mu = O(\eps_i^2).$ 
 The works of Carron \cite{C} give us that 
  $\|\wt{\phi}_{2,i}(1)\|_{H^{\frac{1}{2}}(\Sigma)}$ 
  is bounded and the following
\begin{pro}\label{inkerD2}
 There exists a subfamily of the family $\{ \wt{\phi}_{2,i} \}_{i \in \N}$ 
 which converges in $L^2(M_2, g_2)$. 
 Its limit $\wt{\phi}_2$ defines an extended solution on $\wt{M}_2$,
  i.e.\ $\wt{D}_2 (\wt{\phi}_2) =0$ and 
  $\wt{\phi}_2 \rest_{\Sigma} \in \Ker (T) \cap 
     \Im( \Pi_{\geq - \frac{1}{2}} ).$
\end{pro}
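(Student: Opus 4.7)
The plan is to extract a convergent subsequence from $\{\wt\phi_{2,i}\}$ by a compactness argument, pass to the limit in the equation $\wt{D_2}\wt\phi_{2,i}=O(\eps_i)$, and identify the limit as an element of $\mcW\cap\Ker(\wt{D_2})$ whose boundary behaviour is then characterized by Carron's theory recalled in Section \ref{GBaps}.

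First, I would record what is already in hand from Section \ref{onM2}: the family is bounded in $\mcW$, it satisfies $\|\wt{D_2}\wt\phi_{2,i}\|_{L^2(\wt{M_2})}=O(\eps_i)$, and the traces $\wt\phi_{2,i}\rest_\Sigma$ are bounded in $H^{\frac{1}{2}}(\Sigma)$. The key geometric point is that $\wt{M_2}$ is a smooth complete Riemannian manifold in which $\Sigma=\{r=1\}$ sits as an interior two-sided hypersurface, so no boundary condition needs to be imposed at $\Sigma$. Fixing an open relatively compact neighbourhood $U$ of $M_2$ inside $\wt{M_2}$, the standard interior elliptic estimate for the Dirac-type operator $\wt{D_2}$ yields a constant $C>0$ with $\|\wt\phi_{2,i}\|_{H^1(M_2)}\leq C\bigl(\|\wt\phi_{2,i}\|_{L^2(U)}+\|\wt{D_2}\wt\phi_{2,i}\|_{L^2(U)}\bigr)$, and the right-hand side is uniformly bounded since the change-of-variables formula in Section \ref{onM2} controls the $L^2(U)$-norm in terms of $\|\xi_1\phi_{1,i}\|_{L^2(\mcC_{\eps_i,1})}$. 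The Rellich--Kondrachov theorem on the compact manifold with boundary $M_2$ then provides a subsequence converging strongly in $L^2(M_2,g_2)$ to some $\wt\phi_2$.

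Next, using the weak compactness of the Hilbert space $\mcW$, I extract a further subsequence (not relabelled) that converges weakly, $\wt\phi_{2,i}\rightharpoonup\wt\phi_2\in\mcW$; the weak $\mcW$-limit and the strong $L^2(M_2)$-limit must coincide on $M_2$, so the notation is consistent. Since $\wt{D_2}\wt\phi_{2,i}\to 0$ \emph{strongly} in $L^2(\wt{M_2})$, the weak limit obeys $\wt{D_2}\wt\phi_2=0$ first in the distributional sense, hence in $L^2(\wt{M_2})$. This places $\wt\phi_2$ in $\mcW\cap\Ker(\wt{D_2})$, which by the characterization of extended solutions recalled just before \eref{T} is exactly the condition for $\wt\phi_2$ to be an extended solution on $\wt{M_2}$ with boundary value $\wt\phi_2\rest_\Sigma\in\Ker(T)\cap\Im(\Pi_{\geq-\frac{1}{2}})$.

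The main obstacle is securing the uniform $H^1$-bound on $M_2$: at first glance this looks like a boundary elliptic problem for which one would need an auxiliary boundary condition, but the trick is to exploit that $\Sigma$ is an interior hypersurface of the ambient complete manifold $\wt{M_2}$, which reduces the estimate to a purely interior one on $U$. Once this is in hand, the weak $\mcW$-compactness, the passage to the limit in the equation, and the invocation of Carron's framework to recognise $\wt\phi_2$ as an extended solution are routine.
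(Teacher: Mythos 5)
Your proposal is correct and follows essentially the same route the paper takes, which simply defers the compactness step to ``the works of Carron''. Your contribution is to make that step explicit: the interior elliptic estimate on the complete manifold $\wt{M}_2$, applied on a relatively compact neighbourhood of $M_2$, exploits precisely the observation that $\Sigma$ is an interior hypersurface of $\wt{M}_2$ so no boundary condition is needed; this gives the uniform $H^1(M_2)$ bound from which Rellich compactness, weak $\mcW$-compactness, and the passage to the limit in $\wt{D}_2\wt{\phi}_{2,i}=O(\eps_i)$ all follow, and the identification of the boundary value via $\mcW\cap\Ker(\wt{D}_2)$ with $\Ker(T)\cap\Im(\Pi_{\geq -\frac{1}{2}})$ is exactly Carron's characterization as recalled in Section \ref{GBaps}.
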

 We still denote by $\wt{\phi}_{2,i}$ the subfamily obtained. 

 \subsubsection{Convergence near the singularity.} 

 Now we use the fact that eigenforms satisfy an equation which imposes 
 a local form. We concentrate on $\gamma \in [- \frac{1}{2}, \frac{1}{2}].$ 
 If we write 
\begin{equation*}
  \phi^{[- \frac{1}{2}, \frac{1}{2}]}_{1,i} 
   = \dsum_{\gamma \in [- \frac{1}{2}, \frac{1}{2}]}
         U^{\ast} \sigma_1^\gamma(r),
\end{equation*}
 the terms $\sigma_1^\gamma$ satisfy the equations
$$
  \Big( - \partial_r^2 + \frac{\gamma(1+\gamma)}{ r^2} \Big) \sigma_1^{\gamma}
     = \lambda_{i} \sigma_1^{\gamma}.
$$
 The solutions of this equation have expression in term of the Bessel 
 functions: there exist entire functions $F, G$ with $F(0)=G(0)=1$ and 
 differential forms $c_{\gamma,i}, \, d_{\gamma,i}$ in $\Ker (A- \gamma)$ 
 such that
\begin{equation}\label{sigma-gamma}
  \sigma_1^{\gamma}(r) = 
\begin{cases}
  c_{\gamma,i} r^{\gamma +1} F_{\gamma} (\lambda_i r^2 ) + 
  d_{\gamma,i} \Big( r^{-\gamma} G_{\gamma}( \lambda_i r^2) \Big) \
   & ( |\gamma|< \frac{1}{2}),  \\
  c_{\frac{1}{2}, i} r^{\frac{3}{2}}  F_{\frac{1}{2}} (\lambda_i r^2) + 
  d_{\frac{1}{2}, i} \Big( r^{- \frac{1}{2}} G_{\frac{1}{2}}( \lambda_i r^2) 
  + \frac{2}{\pi} \log(r) r^{ \frac{3}{2} } F_{\frac{1}{2}}(\lambda_i r^2) \Big)
   &  (\gamma = \frac{1}{2}),  \\
  c_{- \frac{1}{2}, i} r^{\frac{1}{2}}  F_{-\frac{1}{2}} (\lambda_i r^2)
    + d_{- \frac{1}{2}, i} \Big( r^{\frac{1}{2}} \log(r) 
    G_{- \frac{1}{2}}( \lambda_i r^2) \Big)  
       &  (\gamma = - \frac{1}{2}). 
\end{cases}
\end{equation}
 The lemmas of the previous section give us the result that 
 the families $c_{\gamma, i}$ and $d_{\gamma, i}$ are bounded and 
 by extraction we can suppose that they converge. 
 In the case of $\gamma= \frac{1}{2}$, we have more: 
 $ \| d_{\frac{1}{2}, i} \|_{L^2 (\Sigma)} 
     = O( |\log \eps_i|^{- \frac{1}{2}}). $

 But we know also, turning back to the family of the last proposition, 
  that the family
 $ \sqrt{\eps_i}\xi_1(\eps_i r) \phi_{1,i}(\eps_i r)$ converges 
 on any sector $1\leq r \leq R$ to $0$, 
  according to the explicit form of $\sigma_1^{\gamma}(r).$ 
{\it As a consequence, the form $\wt{\phi}_2$ has no component for
 $\gamma\in [- \frac{1}{2}, \frac{1}{2}]$ and is indeed an $L^2$-solution.} 
 We have proved
\begin{pro}\label{phi2}
 The form $\wt{\phi}_2$ in Proposition \ref{inkerD2} has no component
  for $\gamma \in [- \frac{1}{2}, \frac{1}{2}].$
   If we set $\phi_2 := \wt{\phi}_2 \rest_{M_2},$
 there exists a subfamily of $\{ \phi_{2,i} \}_i$ which 
  converges, as $i \to \infty,$ to $\phi_2$ and it satisfies 
$$ \phi_2 \in \Dom (\mcD_2), 
    \  \| \phi_2 \|_{L^2(M_2, g_2)} \leq 1 \hbox{ and }  D_2(\phi_2) =0.
$$
\end{pro}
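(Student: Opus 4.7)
The plan is to exploit the explicit Bessel-function description \eref{sigma-gamma} of the components $\sigma_1^{\gamma}$ of $\phi_{1,i}$ near the conical singularity, together with the coefficient bounds from Lemmas \ref{l1}--\ref{l3b}. By construction, on the infinite cone $\mcC_{1,\infty}$ one has $\wt{\phi}_{2,i}(r) = \sqrt{\eps_i}\,\xi_1(\eps_i r)\,\phi_{1,i}(\eps_i r)$, so on any compact sector $1 \le r \le R$ and for $i$ large, the $\gamma$-component of $U(\wt{\phi}_{2,i})$ is obtained by substituting $r \mapsto \eps_i r$ in \eref{sigma-gamma} and multiplying by $\sqrt{\eps_i}$. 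I would show, case by case on $\gamma \in [-\tfrac{1}{2}, \tfrac{1}{2}]$, that this quantity tends to $0$ in $L^2(\Sigma)$, uniformly for $r$ in such a sector.

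For $|\gamma| < \tfrac{1}{2}$, the rescaling turns the two summands into $c_{\gamma,i}\,\eps_i^{\gamma+3/2}\, r^{\gamma+1}\, F_{\gamma}(\cdots)$ and $d_{\gamma,i}\,\eps_i^{1/2-\gamma}\, r^{-\gamma}\, G_{\gamma}(\cdots)$; both prefactors are positive powers of $\eps_i$, and the coefficients are bounded, so each tends to $0$. For $\gamma = -\tfrac{1}{2}$, every summand acquires a factor $\eps_i$ (possibly multiplied by a harmless $\log \eps_i$) and vanishes in the limit. The delicate case is $\gamma = \tfrac{1}{2}$, where the dominant piece after rescaling is $d_{\frac{1}{2},i}\, r^{-1/2}\, G_{\frac{1}{2}}(\lambda_i \eps_i^2 r^2)$ with no power of $\eps_i$ in front; here I would use the sharper estimate $\|d_{\frac{1}{2},i}\|_{L^2(\Sigma)} = O(|\log\eps_i|^{-1/2})$ established in \lref{l3b} to close the argument, the remaining summands being multiplied by an extra $\eps_i^2$ or $\eps_i^2 \log(\eps_i r)$ and hence harmless.

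Combined with the $L^2(M_2,g_2)$ convergence of \pref{inkerD2}, this forces $\wt{\phi}_2$ to have zero component in $\Ker(A-\gamma)$ for every $\gamma \in [-\tfrac{1}{2}, \tfrac{1}{2}]$ on $\mcC_{1,\infty}$. Since $\wt{D}_2 \wt\phi_2 = 0$ and harmonic forms on the cone satisfy $(\partial_r + \gamma/r)\sigma^{\gamma} = 0$, the components of $\wt\phi_2$ on $\mcC_{1,\infty}$ are of the shape $\sigma^{\gamma}(r) = r^{-\gamma}\sigma^{\gamma}(1)$, and the vanishing of $\sigma^{\gamma}(1)$ for $\gamma \le \tfrac{1}{2}$ leaves only components with $\gamma > \tfrac{1}{2}$, which are square-integrable at infinity. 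Thus $\wt\phi_2 \in L^2(\wt{M}_2)$, and the characterization recalled in Section \ref{GBaps} identifies $\phi_2 := \wt\phi_2\rest_{M_2}$ as an element of $\Dom(\mcD_2)$ with $D_2\phi_2 = 0$; the bound $\|\phi_2\|_{L^2(M_2,g_2)} \le 1$ then follows from $L^2$ lower semicontinuity and the normalization $\|\phi_i\|_{L^2(M_{\eps_i})} = 1$.

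The main obstacle is the resonant case $\gamma = \tfrac{1}{2}$: the logarithmic mode in \eref{sigma-gamma} couples with the global $L^2$-normalization only through the slow $|\log\eps_i|^{-1/2}$-decay of $d_{\frac{1}{2},i}$, so one must check that this decay is just enough to neutralize the $r^{-1/2}$ factor surviving the rescaling. This is precisely where \lref{l3b} is used in an essential way; all the other estimates are power-type and comparatively easy.
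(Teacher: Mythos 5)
Your proposal is correct and follows essentially the same route as the paper: you invoke the Bessel-function description \eqref{sigma-gamma} of the $\gamma$-components, the boundedness of the coefficients $c_{\gamma,i}$, $d_{\gamma,i}$, the sharper decay $\|d_{\frac{1}{2},i}\|_{L^2(\Sigma)} = O(|\log\eps_i|^{-\frac{1}{2}})$, and the rescaling $r \mapsto \eps_i r$ with the prefactor $\sqrt{\eps_i}$ to show that $\wt{\phi}_{2,i}$ vanishes in the limit on compact sectors $1 \le r \le R$, which is exactly the paper's argument (``turning back to the family of the last proposition, that the family $\sqrt{\eps_i}\xi_1(\eps_i r)\phi_{1,i}(\eps_i r)$ converges on any sector $1\leq r\leq R$ to $0$''). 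Your explicit case analysis by $\gamma$ and your identification of the resonant $\gamma=\frac{1}{2}$ mode as the only place where \lref{l3b} is genuinely needed spell out what the paper leaves implicit, and the concluding passage (vanishing of the $[-\frac{1}{2},\frac{1}{2}]$-components of the boundary value, hence $L^2$ decay on the cone, hence $\phi_2 \in \Dom(\mcD_2)$ with $D_2\phi_2=0$ and $\|\phi_2\|_{L^2}\le 1$ by the normalization) matches the paper.
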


 Moreover, the harmonic prolongation of 
 $\sqrt{\eps_i} \xi_1 (\eps_i r) \phi_{1,i} (\eps_i r)$ 
 $$ \overline{\phi}_{2,i} = \mcE (\sqrt{\eps_i} \xi_1
   (\eps_i r) \phi_{1,i} (\eps_i r)) $$
 minimizes the norm of $D_2(\phi_2)$. As a consequence,
 $ \| D_2 (\overline{\phi}_{2,i})\|_{L^2(M_2)}= O (\eps_i)$ 
 implies
\begin{equation*}
 \| T (\sqrt{\eps_i} \phi_{1,i} (\eps_i)) \|_{H^{- \frac{1}{2}}
     (\Sigma)} = O(\eps_i)
\end{equation*}
 with the Dirac-Neumann operator $T$ defined in (\ref{T}). 

 But, by Lemmas \ref{l1} and \ref{l2}, we know that
 $\| \Pi_{< - \frac{1}{2}} (\phi_{1,i}(\eps))
    \|_{H^{\frac{1}{2}}(\Sigma)} = O( \sqrt{\eps}).$
  The continuity of $T$ gives hence
 $ \| T \circ \Pi_{\geq - \frac{1}{2}} (\phi_{1,i}(\eps_i)) 
  \|_{H^{- \frac{1}{2}}(\Sigma)}  =O (\sqrt{\eps_i}). $
 To obtain consequences of this result on the term
 $\Pi_{[-\frac{1}{2}, \frac{1}{2}]}(\phi_{1,i} (\eps_i)),$ 
 we must make sense of the possibility of working modulo $\Im (T)$. 
 In the following, for simplicity of notation, we identify 
 the spectral projection $\Pi_I$ of $A$ for the interval $I$ 
 with $U^\ast \Pi_I U.$ 
\begin{pro}\label{Bphi}
 The space $T( \Im (\Pi_{> \frac{1}{2}}) \cap H^{\frac{1}{2}}(\Sigma))$
  is closed in $H^{- \frac{1}{2}}(\Sigma)$, 
   as a consequence of the works of Carron.
 Let us define $B(\phi)$ for 
 $\phi \in \Im (\Pi_{[- \frac{1}{2}, \frac{1}{2}]})$ 
 as the orthogonal projection of $T(\phi)$ onto the orthogonal complement 
 of this space 
  $T (\Im (\Pi_{> \frac{1}{2}}) \cap H^{\frac{1}{2}}(\Sigma)).$
 Then $B$ is linear and satisfies 
\begin{equation*}
\begin{split}
 \bullet  & \quad  \| B \phi \|_{H^{-\frac{1}{2}} (\Sigma)}
         \leq  \| T \phi \|_{H^{- \frac{1}{2}} (\Sigma)},   \\
 \bullet  & \quad   \text{If } B (\phi) =0,  \text{ there exists an } 
     \eta \in \Im (\Pi_{> \frac{1}{2}}) \ \text{such that} 
    \ T ( \phi + \eta) =0.
\end{split}
\end{equation*}
\end{pro}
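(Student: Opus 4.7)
The plan is to proceed in three stages: first establish the closedness of $T(V)$ with $V := \Im(\Pi_{>\frac{1}{2}}) \cap H^{\frac{1}{2}}(\Sigma)$, then define $B$ as an orthogonal projection, and finally verify the two listed properties by standard Hilbert-space arguments. I expect closedness to be the delicate step; once granted, the rest is essentially formal.

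For closedness, I would invoke Carron's theory of non-parabolic-at-infinity operators on the quasi-asymptotically conical manifold $\widetilde{M_2}$. By Proposition~\ref{D2}, $\Ker(\widetilde{D_2})$ is finite-dimensional, and the graph norm of $\widetilde{D_2}$ controls the $H^1$-norm on the appropriate domain. Combined with the continuity of $T: H^{\frac{1}{2}}(\Sigma) \to H^{-\frac{1}{2}}(\Sigma)$ (Carron \cite{C2}) and the characterization of $L^2$-harmonic extensions on $\widetilde{M_2}$ as those whose boundary data lies in a finite-codimensional subspace of $V$, these ingredients yield that $T|_V$ is semi-Fredholm into $H^{-\frac{1}{2}}(\Sigma)$ and hence has closed range. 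This is the step I expect to require the most care, since it amounts to translating Carron's global analytic results on $\widetilde{M_2}$ into a closed-range statement about the Dirichlet-Neumann operator $T$ restricted to $V$; once this is done, the remainder is routine.

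Granting closedness, set $K := T(V) \subset H^{-\frac{1}{2}}(\Sigma)$ and let $\pi_{K^\perp}$ denote the orthogonal projection in $H^{-\frac{1}{2}}(\Sigma)$ onto $K^\perp$. Define $B(\phi) := \pi_{K^\perp}(T\phi)$ for $\phi \in \Im(\Pi_{[-\frac{1}{2}, \frac{1}{2}]})$. Linearity follows by composition, and the inequality $\|B\phi\|_{H^{-\frac{1}{2}}(\Sigma)} \leq \|T\phi\|_{H^{-\frac{1}{2}}(\Sigma)}$ is immediate since orthogonal projection is $1$-Lipschitz on a Hilbert space. For the second property, suppose $B(\phi) = 0$; then $T\phi \in K = T(V)$, so there exists $\eta_0 \in V \subset \Im(\Pi_{>\frac{1}{2}})$ with $T\phi = T\eta_0$. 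Setting $\eta := -\eta_0 \in \Im(\Pi_{>\frac{1}{2}})$ gives $T(\phi + \eta) = 0$ by linearity, completing the verification.
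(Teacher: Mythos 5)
Your second and third paragraphs---defining $B$ as the orthogonal projection of $T\phi$ onto $K^\perp$, and verifying linearity, the $1$-Lipschitz bound, and the preimage property from $B(\phi)=0$---are correct, essentially formal, and match what the paper treats as immediate.

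However, there is a genuine gap in your treatment of the closedness of $T(V)$, where $V := \Im(\Pi_{>\frac{1}{2}}) \cap H^{\frac{1}{2}}(\Sigma)$. You assert that Proposition~\ref{D2}, the finite-dimensionality of $\Ker(\widetilde{D_2})$, the $H^1$-control by the graph norm, and the continuity of $T$ together ``yield that $T|_V$ is semi-Fredholm,'' but you give no mechanism connecting these facts to a closed-range statement. Semi-Fredholmness \emph{is} the closed-range statement (plus a finiteness condition), so invoking it here is circular rather than a proof; and indeed $V$ is an infinite-dimensional closed subspace of $H^{\frac{1}{2}}(\Sigma)$, while $T$ by itself is merely a continuous pseudodifferential operator of order $1$, which has no a priori reason to carry a closed subspace to a closed one. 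The ingredient your proposal omits entirely, and on which the paper's proof hinges, is the \emph{exterior} Dirichlet--Neumann operator $T_{\mcC}$ of the infinite cone $\mcC_{1,\infty}$ defined in~\eqref{TC}. The sum $T + T_{\mcC}$ is an elliptic pseudodifferential operator of order $1$ on the compact manifold $\Sigma$, hence is Fredholm from $H^{\frac{1}{2}}(\Sigma)$ to $H^{-\frac{1}{2}}(\Sigma)$, with finite-dimensional kernel and a continuous parametrix $Q$ satisfying $Q\circ(T+T_{\mcC}) = \Id - \Pi_{\Ker(T+T_{\mcC})}$. The crucial structural observation is that $V \subset \Ker(T_{\mcC})$, so that $T|_V = (T+T_{\mcC})|_V$; closedness of $T(V) = (T+T_{\mcC})(V)$ then follows from the Fredholmness of $T + T_{\mcC}$ either by the subsequence-extraction argument carried out in the paper or, abstractly, by the standard fact that a Fredholm operator maps closed subspaces to closed subspaces. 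Without bringing $T_{\mcC}$ into play, your sketch does not establish the key claim.
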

\begin{proof} 
 To prove that $T (\Im (\Pi_{>\frac{1}{2}}) \cap H^{\frac{1}{2}}(\Sigma))$ 
 is closed, we must recall some facts contained in \cite{C2}. 
 Let us denote here $T_{\mcC}$ the operator constructed as $T$, 
 but for the infinite part $\mcC_{1,\infty}.$ 
 Then $ \Im (T_{\mcC}) = \Im (\Pi_{> \frac{1}{2}})$ is a subspace of
  $ \Ker (T_\mcC) = \Im (\Pi_{\geq - \frac{1}{2}} ).$
 We know that $T+ T_{\mcC}$ is an elliptic operator of order $1$ on $\Sigma$ 
 which is compact. 
 As a consequence, $\Ker(T+T_\mcC)$ is finite dimensional, 
 $(T+T_\mcC) (H^{\frac{1}{2}}(\Sigma))$ is a closed subspace 
 of $H^{- \frac{1}{2}}(\Sigma)$ and $T+T_\mcC$ admits a continuous
  parametrix $Q : H^{-\frac{1}{2}}(\Sigma) \to H^{\frac{1}{2}}(\Sigma)$
  such that
\begin{equation*}
  Q \circ (T + T_{\mcC}) = \Id - \Pi_{\Ker (T+T_{\mcC})},
\end{equation*}
 where $\Pi_{\Ker(T+T_\mcC)}$ denotes the orthogonal projection
 onto $\Ker(T+T_\mcC)$ for the inner product of $H^{\frac{1}{2}}(\Sigma).$ 
 We can know prove that
 $T (\Im \Pi_{> \frac{1}{2}} \cap H^{\frac{1}{2}}(\Sigma))$ is closed. 

 Let $\{ \sigma_i \}_i$ be a sequence of elements in 
  $\Im (\Pi_{> \frac{1}{2}} ) \cap H^{\frac{1}{2}}(\Sigma)$
 such that $T (\sigma_i)$ converges, and 
  let $\psi = \lim_{i \to \infty} T( \sigma_i).$
 We can suppose that 
\begin{equation*}
 \sigma_i \in \Big( \Ker (T) \cap \Im (\Pi_{> \frac{1}{2}}) 
     \cap H^{\frac{1}{2}} (\Sigma) \Big)^{\perp}.
\end{equation*}
 We have $\Im (\Pi_{> \frac{1}{2}}) \cap H^{\frac{1}{2}}(\Sigma)
   \subset \Ker (T_{\mcC}).$ 
 Then it means that
 $ (T+T_{\mcC}) \sigma_i = T ( \sigma_i)$ converges and 
 $\tau_i = Q \circ (T+ T_{\mcC}) \sigma_i$ 
 converges, let $\tau = \lim_{i \to \infty} \tau_i.$
 Thus,
\begin{equation*}
  \sigma_i = \tau_i + e_i \ \text{ with } \
     \tau_i \in  \Ker (T+ T_{\mcC})^{\perp}, \
      e_i \in \Ker(T+T_{\mcC} ).
\end{equation*}

 The sequence $\{ e_i \}_i$ must be bounded, unless we can extract a 
 subsequence $\| e_i \|\to \infty$, so it is true also for $\| \sigma_i \|$
 and by extraction we can suppose that the bounded sequence
  $e_i / \| \sigma_i \|$  converges,
 since it leaves in a finite dimensional space.
 Let $e^{\prime}$ be this limit, then $e^{\prime} = \lim e_i / \| \sigma_i \|$ 
 also and 
 $e^{\prime} \in \Im (\Pi_{> \frac{1}{2}}) \cap H^{ \frac{1}{2}}(\Sigma).$ 

 Finally, $e^{\prime}$ satisfies $\| e^{\prime} \|= 1$ and
\begin{equation*}
  e^{\prime} \in \Ker( T + T_{\mcC})  \hbox{ and } \, 
   e^{\prime} \in \Ker (T_{\mcC}),   
\end{equation*}
 as well as $e_i$ and $\sigma_i$, which implies $T (e^{\prime}) = 0.$
 Thus, $e^{\prime} = \lim \sigma_i / \| \sigma_i \| \in 
   \Im (\Pi_{> \frac{1}{2}} ) \cap H^{\frac{1}{2}}(\Sigma) \cap \Ker (T).$ 
 But, by the assumption of $\sigma_i,$ 
 $e^{\prime}$ must be orthogonal to this space, which is a contradiction.

 So, $e_i$ is a bounded sequence in a finite dimensional space, 
 by extraction, we can suppose that it converges.
 Then $\sigma_i$ admits a convergent subsequence, 
 and let $\sigma$ denote its limit:
\begin{equation*}
 \sigma \in \Im (\Pi_{> \frac{1}{2}} ) \cap H^{\frac{1}{2}}(\Sigma) \ 
   \hbox{ and } \psi = T (\sigma).
\end{equation*}
\end{proof}

 As an application of this Proposition \ref{Bphi}, we have
\begin{equation*}
 \| B \circ \Pi_{[-\frac{1}{2}, \frac{1}{2}]} (\phi_{1,i}(\eps_i)) 
       \|_{H^{- \frac{1}{2}} (\Sigma)} = O (\sqrt{\eps_i}).
\end{equation*}
 This is the sum of few terms.
 We remark that the term with $c_{\gamma, i}$ is in fact always 
 $O (\sqrt{\eps_i}).$ 
 For the same reason, we can freeze the function $G$ at $0$,
  where its value is $1$. So we can say 
\begin{equation}\label{e18}
 \Big\| \eps_i^{\frac{1}{2}} \log (\eps_i) B \circ U^{\ast} 
  (d_{- \frac{1}{2},i}) + \sum_{|\gamma|< \frac{1}{2}} \eps_i^{-\gamma} 
  B \circ U^{\ast} (d_{\gamma,i}) + 
    \eps_i^{- \frac{1}{2}} B \circ U^{\ast} (d_{\frac{1}{2},i})
  \Big\|_{H^{- \frac{1}{2}} (\Sigma)}
    =  O( \sqrt{\eps_i}), 
\end{equation}
 while all the other terms, which have the behavior of 
 $r^{\delta}$ with $\delta > \frac{1}{2},$ enter in an expression 
 belonging to $\Dom(D_{1,\min}).$

 In fact, we have the following result.
\begin{pro}\label{phi1W}
 One can write 
 $\Pi_{(- \frac{1}{2}, \frac{1}{2}]} \circ U (\xi_1 \phi_{1,i}) =
    \overline{\sigma}_{1,i} + \sigma_{0,i}$ with the bounded sequence 
  $U^{\ast} (\sigma_{0,i}) \in \Dom(D_{1,\min})$ and 
  $\overline{\sigma}_{1,i} = \overline{\sigma}_{1,i}^{< \frac{1}{2}} +
    \overline{\sigma}_{1,i}^{\frac{1}{2}}$ 
 satisfies that there exists a subfamily of 
  $\overline{\sigma}_{1,i}^{<\frac{1}{2}}$ 
  which converges, as $i \to \infty$ to 
 $\dsum_{\gamma \in (- \frac{1}{2}, \frac{1}{2})} r^{-\gamma} \sigma_\gamma $ 
  with
 $\dsum_{\gamma \in (- \frac{1}{2}, \frac{1}{2})}\sigma_\gamma \in W, $
 while 
\begin{equation*}
 \overline{\sigma}_{1,i}^{\frac{1}{2}} \sim 
   \frac{1}{\sqrt{|\log \eps_i|}}
     r^{- \frac{1}{2}} \overline{\sigma}_{\frac{1}{2}}  \
   \text{ for some } \  \overline{\sigma}_{\frac{1}{2}} 
   \in \Ker(A- \tfrac{1}{2}).
\end{equation*}
 Thus, $\overline{\sigma}_{1,i}^{\frac{1}{2}}$ concentrates 
 on the singularity .
\end{pro}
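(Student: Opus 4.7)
The plan is to combine the Bessel expansion \eref{sigma-gamma} with the limit analysis provided by \eref{e18} and \pref{Bphi}. For each $\gamma \in (-1/2, 1/2)$, the Bessel solution $c_{\gamma,i}\,r^{\gamma+1}F_\gamma(\lambda_i r^2)$ has leading exponent $\gamma + 1 > 1/2$ and the correction $d_{\gamma,i}\,r^{-\gamma}(G_\gamma(\lambda_i r^2)-1)$ has leading order $r^{2-\gamma}$, so both lie in $\Dom(D_{1,\min})$; analogous statements hold at $\gamma = 1/2$ after peeling off the $r^{3/2}\log r$ correction. Collecting all such regular pieces into $\sigma_{0,i}$ and applying $\Pi_{(-1/2,1/2]}$, the residual splits as $\overline{\sigma}_{1,i}^{<1/2}(r) = \sum_{\gamma \in (-1/2, 1/2)} d_{\gamma,i}\, r^{-\gamma}$ and $\overline{\sigma}_{1,i}^{1/2}(r) = d_{1/2,i}\, r^{-1/2}$.

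The uniform $L^2$-bound on $\phi_{1,i}$ combined with $\int_{\eps_i}^{1/2} r^{-2\gamma}\,dr$ being uniformly bounded for $\gamma < 1/2$ forces $\{d_{\gamma,i}\}$ bounded in the finite-dimensional $\Ker(A-\gamma)$, while $\int_{\eps_i}^{1/2} r^{-1}\,dr \sim |\log \eps_i|$ forces $\|d_{1/2,i}\|^2 = O(1/|\log \eps_i|)$. Extracting a subsequence diagonally, $d_{\gamma,i} \to \sigma_\gamma$ for $\gamma \in (-1/2, 1/2)$ and $\sqrt{|\log \eps_i|}\,d_{1/2,i} \to \overline{\sigma}_{1/2}$, yielding both claimed limits pointwise in $r > 0$.

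The core of the proof is to verify $\sum_{\gamma \in (-1/2, 1/2)} \sigma_\gamma \in W$. I reduce this to $B\bigl(\sum_\gamma \sigma_\gamma + \overline{\sigma}_{1/2}\bigr) = 0$: \pref{Bphi} then produces $\eta \in \Im(\Pi_{>1/2}) \cap H^{1/2}(\Sigma)$ with $T\bigl(\sum_\gamma \sigma_\gamma + \overline{\sigma}_{1/2} + \eta\bigr) = 0$, and for each $\gamma \in (-1/2, 1/2)$ the element $\eta_\gamma := \sum_{\gamma' > \gamma,\,\gamma' \in (-1/2,1/2)} \sigma_{\gamma'} + \overline{\sigma}_{1/2} + \eta \in \Im(\Pi_{>\gamma})$ witnesses $\sigma_\gamma \in W_\gamma$. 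The vanishing of $B$ is extracted from \eref{e18}, $\|B(\Psi_i)\|_{H^{-1/2}(\Sigma)} = O(\sqrt{\eps_i})$, by rescaling and iteration: multiplying $\Psi_i$ by $\sqrt{|\log \eps_i|}\,\eps_i^{1/2}$ isolates the $\gamma = 1/2$ contribution in the limit and, by continuity of $B$ on the finite-dimensional space $\Im(\Pi_{[-1/2,1/2]})$, gives $B(\overline{\sigma}_{1/2}) = 0$. One then descends through the finitely many eigenvalues $\gamma \in (-1/2, 1/2)$ in decreasing order: at each step, rescaling $\Psi_i$ by $\eps_i^{\gamma}$ and controlling the $\gamma = 1/2$ cross-term via the refined rate $\sqrt{|\log \eps_i|}\,B(U^* d_{1/2,i}) \to 0$ (re-injected into \eref{e18}) extracts $B(U^*\sigma_\gamma) = 0$.

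The main obstacle is the genuinely multi-scale structure of $\Psi_i$: its different spectral components carry distinct powers of $\eps_i$, and the largest scale $\eps_i^{-1/2}$ (from $\gamma = 1/2$, tempered only by $|\log \eps_i|^{-1/2}$) persists even after the basic vanishing $B(\overline{\sigma}_{1/2}) = 0$ is established. Carrying the descent through therefore demands quantitative control on $B(U^* d_{1/2, i})$ beyond the qualitative statement $B(U^* d_{1/2, i}) \to 0$; this control is obtained by successive substitution into \eref{e18} and relies crucially on the linearity and continuity of $B$ on the finite-dimensional space $\Im(\Pi_{[-1/2,1/2]})$.
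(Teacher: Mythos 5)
Your first two paragraphs agree with the paper: the Bessel expansion \eqref{sigma-gamma}, the $L^2$-bound giving boundedness of $d_{\gamma,i}$ for $|\gamma|<\tfrac12$ and $\|d_{\frac12,i}\|^2 = O(1/|\log\eps_i|)$, and the extraction of a diagonal subsequence are exactly as in the text. The problem lies in the third and fourth paragraphs, where you analyse \eqref{e18}.

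There are two related gaps. First, the reduction ``it suffices to show $B\bigl(\sum_\gamma\sigma_\gamma+\overline{\sigma}_{\frac12}\bigr)=0$'' does not establish $\sigma_\gamma\in W_\gamma$ for each $\gamma$. From a single relation $T\bigl(\sum_{\gamma'}\sigma_{\gamma'}+\overline{\sigma}_{\frac12}+\eta\bigr)=0$ you set $\eta_\gamma:=\sum_{\gamma'>\gamma}\sigma_{\gamma'}+\overline{\sigma}_{\frac12}+\eta$. This does land in $\Im(\Pi_{>\gamma})$, but $\sigma_\gamma+\eta_\gamma$ is \emph{not} the element on which $T$ was shown to vanish; the components $\sigma_{\gamma'}$ with $\gamma'<\gamma$ have been dropped, so $T(\sigma_\gamma+\eta_\gamma)=-T\bigl(\sum_{\gamma'<\gamma}\sigma_{\gamma'}\bigr)\neq 0$ in general. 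Your argument would work only for the smallest eigenvalue in $(-\tfrac12,\tfrac12)$.

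Second, the stronger assertion ``one extracts $B(U^\ast\sigma_\gamma)=0$'' for each $\gamma$ individually is not delivered by the rescaling of \eqref{e18}, and is in fact \emph{not} what one can prove. Multiplying \eqref{e18} by $\sqrt{\eps_i}$ gives $\|B\circ U^\ast(d_{\frac12,i})\|=O(\eps_i^{\frac12-\gamma_1})$, not better. When you then rescale by $\eps_i^{\gamma_1}$, the $\gamma=\tfrac12$ cross-term $\eps_i^{\gamma_1-\frac12}B\circ U^\ast(d_{\frac12,i})$ is of size $O(1)$, not $o(1)$, and it does not disappear no matter how often you substitute back into \eqref{e18}. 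This is precisely why the paper decomposes $d_{\frac12,i}=\frac{1}{\sqrt{|\log\eps_i|}}d_{\frac12,i}^{(0)}+d_{\frac12,i}^{\perp}$ along $\Ker(B\circ U^\ast\rest_{E_{1/2}})$ and its orthocomplement, renames the orthogonal residual $d_i^{(1)}=\eps_i^{\gamma_1-\frac12}d_{\frac12,i}^{\perp}$ (which is bounded but does not tend to $0$), and at the next scale obtains a statement about the \emph{combination} $d_i^{(1)}+d_{\gamma_1,i}$, not about $d_{\gamma_1,i}$ alone. The conclusion of the iteration is therefore that $d_{\gamma_k}$ plus a correction $e\in E_{\frac12}\oplus\dots\oplus E_{\gamma_{k-1}}$ lies in $\Ker(B\circ U^\ast)$, and it is this corrected element (with support only in $\bigoplus_{\mu\geq\gamma_k}E_\mu$) that feeds into Proposition \ref{Bphi} to produce $\eta_{\gamma_k}\in\Im(\Pi_{>\gamma_k})$ with $T\circ U^\ast(\overline{\sigma}_{\gamma_k}+\eta_{\gamma_k})=0$. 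Your proposal, by trying to kill $B$ on each $\sigma_\gamma$ separately and by distributing a single $T$-relation over all $\gamma$ simultaneously, misses this mechanism and leaves an unfilled hole exactly at the multi-scale mixing you yourself identify as the main obstacle.
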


\begin{proof}
The term $\overline{\sigma}_{1,i}$ comes from the expression obtained
in \eref{e18}, while $\sigma_{0,i}$ is the sum of all the other terms.

We then concentrate on \eref{e18}.
First, we gather the terms concerning the same eigenvalue and 
still denote by $d_{\gamma,i}$ the sum of all the terms with the same 
eigenvalue. 
Let $- \frac{1}{2} \leq \gamma_p < \dots < \gamma_0 \leq \frac{1}{2}$
 be the eigenvalues of $A$ in $[ - \frac{1}{2}, \frac{1}{2}].$
 
We then define the limit $d_{\gamma}$ as follows: 
\begin{equation*}
  d_{\gamma} := 
   \begin{cases}
     \dlim_{i \to \infty}  d_{\gamma,i}  &  (\gamma \neq \frac{1}{2}), \\ 
     \dlim_{i \to \infty} \sqrt{ |\log \eps_i| } \, d_{\frac{1}{2},i}
             &  (\gamma = \frac{1}{2})
   \end{cases} 
\end{equation*}
 and put  $E_\gamma=\Ker(A-\gamma).$

 Indeed, we can, step by step, decompose $d_{\gamma,i}$ on a part in 
 $\Ker (B \circ U^\ast)$ and a part which appears on a smaller behavior 
 in $\eps_i.$

 \noindent
 $\bullet$ first step: in $E_\frac{1}{2}.$ 
  Multiplying \eref{e18} by $\sqrt{\eps_i},$ we obtain that
   $\|B \circ U^{\ast} (d_{\frac{1}{2},i}) \|_{H^{- \frac{1}{2}} (\Sigma)}
      = O(\eps_i^{\frac{1}{2} -\gamma_1}).$
 We decompose $d_{\frac{1}{2},i}=
 \frac{1}{\sqrt{|\log\eps_i|}} d_{\frac{1}{2},i}^{(0)} + d_{\frac{1}{2},i}^{\perp}$
 along $\Ker(B \circ U^{\ast}  \rest_{E_{\frac{1}{2}}})$ and its orthogonal in 
 $E_\frac{1}{2}.$
  Then, 
 $ \| B\circ U^{\ast} (d_{\frac{1}{2},i}) \|_{H^{- \frac{1}{2}}(\Sigma)} 
  = O(\eps_i^{\frac{1}{2} -\gamma_1}) $
  implies 
 $ \| d_{\frac{1}{2},i}^{\perp} \|_{H^{\frac{1}{2}} (\Sigma)} 
   = O( {\eps_i}^{\frac{1}{2} - \gamma_1}).
 $
 So, 
 $$ d_{\frac{1}{2}} = \dlim_{i \to \infty} \sqrt{ |\log \eps_i| } \, 
  d_{\frac{1}{2},i} = \dlim_{i \to \infty} d_{\frac{1}{2},i}^{(0)} 
  \in \Ker(B \circ U^{\ast})
$$
 and if we write
   $d_{\frac{1}{2},i}^{\perp} = {\eps_i}^{\frac{1}{2} - \gamma_1} d_{i}^{(1)}$ and 
 re-introduce this in \eref{e18}, then it has the new expression 
 $(\ref{e18}^{\prime})$
\begin{multline*}
 \Big\| \eps_i^{\frac{1}{2}} \log (\eps_i) B \circ U^{\ast} 
  (d_{- \frac{1}{2},i}) + \sum_{j=2}^p \eps_i^{-\gamma_j} 
  B \circ U^{\ast} (d_{\gamma_j,i}) +\\ 
  {\eps_i}^{- \gamma_1} B \circ U^\ast (d_{i}^{(1)}+d_{\gamma_1,i})  
  \Big\|_{H^{- \frac{1}{2}} (\Sigma)}
    =  O( \sqrt{\eps_i}).
\end{multline*}
 \noindent
 $\bullet$ second step: in $E_\frac{1}{2}\oplus E_{\gamma_1}.$ 
 Multiplying by $\eps_i^{\gamma_1}$ in $(\ref{e18}^{\prime}),$ we obtain that
\begin{equation}\label{e19}
 \| B \circ U^\ast (d_{i}^{(1)}+d_{\gamma_1,i} )
  \|_{H^{- \frac{1}{2}} (\Sigma)} =O(\eps_i^{\gamma_1 -\gamma_2}).
\end{equation}
 We decompose $d_{i}^{(1)} + d_{\gamma_1,i} =
 d_{\gamma_1,i}^{(0)}+d_{\gamma_1,i}^{\perp}$
 along $\Ker(B\circ U^{\ast} \rest_{E_{\frac{1}{2}} \oplus E_{\gamma_1}})$
 and its orthogonal in 
 $E_{\frac{1}{2}} \oplus E_{\gamma_1}.$

 Now, the equation \eref{e19} says that 
 $ \| d_{\gamma_1,i}^{\perp} \|_{H^{\frac{1}{2}} (\Sigma)} 
     = O(\eps_i^{\gamma_1-\gamma_2}),$
 so
 $d_{\gamma_1} = \dlim_{i \to \infty} d_{\gamma_1,i}
  = \dlim_{i \to \infty} \Pi_{\{\gamma_1\}} (d_{\gamma_1,i}^{(0)})
 $
 and, as 
 $d_{\gamma_1,i}^{(0)} \in \Ker (B \circ U^{\ast} 
  \rest_{E_{\frac{1}{2}} \oplus E_{\gamma_1}} )$,
 extracting from $\Pi_{\{ \frac{1}{2} \}} (d_{\gamma_1,i}^{(0)})$ 
 a convergent subsequence, we can say that there exists 
 an $e_{\frac{1}{2}} \in E_{\frac{1}{2}}$ such that
 $$  d_{\gamma_1} + e_{\frac{1}{2}} \in \Ker (B \circ U^{\ast}).
$$
 On the other hand, if we can write 
$$ d_{\gamma_1,i}^{\perp} = {\eps_i}^{\gamma_1 - \gamma_2} d_{i}^{(2)},
$$
 then the new expression of \eref{e18} is
\begin{multline*}
 \Big\| \eps_i^{\frac{1}{2}} \log (\eps_i) B \circ U^{\ast} 
  (d_{- \frac{1}{2},i}) + \sum_{j=3}^p \eps_i^{-\gamma_j} 
  B \circ U^{\ast} (d_{\gamma_j,i}) + \\
    \eps_i^{- \gamma_2} B \circ U^\ast (d_{i}^{(2)}+d_{\gamma_2,i}) 
   \Big\|_{H^{- \frac{1}{2}} (\Sigma)}
    =  O( \sqrt{\eps_i}).
\end{multline*}
 $\bullet$ We can continue in this way until the term concerning $\gamma_p.$
It constructs terms 
\begin{align*}
  d_{\gamma_k,i}^{(0)} & \in \Big(E_{\frac{1}{2}} \oplus \dots \oplus 
    E_{\gamma_k}\Big) \cap \Ker (B \circ U^{\ast}), \\
  d_{i}^{(k+1)} & \in E_{\frac{1}{2}} \oplus \dots \oplus E_{\gamma_{k}}
\end{align*}
 with $0 \leq k \leq p.$
 If we decompose
   $d_{\gamma_k,i}^{(0)}= \dsum_{j=0}^k d_{\gamma_j,i}^{\gamma_k(0)}$ and 
   $d_{i}^{(k+1)} = \dsum_{j=0}^k d_{\gamma_j,i}^{(k+1)},$  then
\begin{align*}
 d_{\frac{1}{2},i} &= \frac{1}{\sqrt{|\log\eps_i|}} d_{\frac{1}{2},i}^{(0)}+
  {\eps_i}^{\frac{1}{2}- \gamma_1} d_{\frac{1}{2},i}^{\gamma_1(0)} + 
  {\eps_i}^{\frac{1}{2}- \gamma_2} d_{\frac{1}{2},i}^{\gamma_2(0)} + \cdots + 
  {\eps_i} \log (\eps_i) \, d_{\frac{1}{2},i}^{(p+1)}, \\
 d_{\gamma_1,i} &= \Pi_{\{\gamma_1\}} \big( d_{\gamma_1,i}^{(0)} \big) +
  {\eps_i}^{\gamma_1 -\gamma_2} d_{\gamma_1,i}^{\gamma_2(0)} +
  {\eps_i}^{\gamma_1 -\gamma_3} d_{\gamma_1,i}^{\gamma_3(0)} + \cdots.
\end{align*}

 Now, because all the families involved here (in finite numbers) are 
 bounded in a finite dimensional space, we can suppose, by successive 
 extractions, that they converge.
 We have
 $$ d_{\gamma} = \lim_{\eps_i\to 0} \Pi_{\{\gamma\}}
      \big( d_{\gamma,i}^{(0)} \big).
 $$
 It means that there exist elements 
  $\overline{\sigma}_{\gamma} = d_{\gamma} \in \Ker( A- \gamma),
   \, |\gamma| \leq \frac{1}{2}$ 
  such that there exists an $\eta_\gamma \in \Im (\Pi_{>\gamma})$ with
\begin{equation*}
 ( T \circ U^\ast) (\overline{\sigma}_\gamma + \eta_\gamma ) = 0,
\end{equation*}
 and if we denote
\begin{equation*}
\begin{split}
 \Pi_{(\gamma, \frac{1}{2}]} (\eta_\gamma) 
    = \sum_{\mu> \gamma} \eta^{\mu}_{\gamma},
\end{split}
\end{equation*}
 then we obtain
\begin{align*} 
  \Pi_{(- \frac{1}{2}, \frac{1}{2}]} \circ U (\phi_{1,i}(r)) \sim  
 & \sum_{- \frac{1}{2} \leq \mu< \gamma < \frac{1}{2} } r^{-\gamma} 
 ( \overline{\sigma}_{\gamma} + {\eps}_i^{\gamma-\mu} \eta^\gamma_\mu ) \\ 
 & \quad + r^{- \frac{1}{2}} \Big\{ |\log \eps_i|^{- \frac{1}{2}} 
  \overline{\sigma}_{\frac{1}{2}}+ 
  \sum_{- \frac{1}{2} \leq \mu < \frac{1}{2}}
    \eps_i^{ \frac{1}{2}- \mu} \eta^{\frac{1}{2}}_{\mu} \Big\}.
\end{align*}
 Here, the term $\eps^{- \mu}_i$ has to be replaced by 
  $\eps_i^{ \frac{1}{2}} \log \eps_i$ in the case of 
  $\mu = - \frac{1}{2}.$ 
\end{proof}

 \subsubsection{Conclusions on the side of $M_1$.}

 We now decompose $\phi_{1,i} = \phi_{1, \eps_i}$ near the singularity 
 as follows: Let 
\begin{equation*}
\begin{split}
 {\xi_1} \phi_{1,\eps_i} &= {\xi_1} \left\{ 
    \phi^{\leq - \frac{1}{2}}_{1,i} + 
    \phi^{ (- \frac{1}{2}, \frac{1}{2}] }_{1,i} +
    \phi^{> \frac{1}{2}}_{1,i} \right\}
\end{split}
\end{equation*}
 according to the decomposition, on the cone, of $\sigma_1$ along 
 the eigenvalues of $A$ respectively less than $- \frac{1}{2}$,
 in $(- \frac{1}{2}, \frac{1}{2}]$ and greater than $\frac{1}{2}.$ 

 We first remark that the expression and the convergence of 
 $\phi^{ (- \frac{1}{2}, \frac{1}{2}] }_{1,i}$ are given 
  by the preceding Proposition \ref{phi1W}.

 Now $\phi^{> \frac{1}{2}}_{1,i}$ and 
 $\widetilde{\psi}_{1,i} = \xi_1 P_{\eps_i} 
    (\Pi_{>\frac{1}{2}} \circ U (\phi_{2,i}(1))$ 
 have the same boundary value. 
 But, by Propositions \ref{inkerD2} and \ref{phi2}, we have 
\begin{equation*}
 \lim_{i \to \infty} U (\phi_{2, i}(1)) = U (\phi_{2}(1))
  \in \Im (\Pi_{>\frac{1}{2}}) 
 \hbox{ for the norm of } H^{\frac{1}{2}}(\Sigma).
\end{equation*}
 So, $\phi^{> \frac{1}{2}}_{1,i} - \widetilde{\psi}_{1,i}$ 
 can be considered in $H^1(M_1(0))$ by a prolongation by $0$ and
\begin{pro}\label{tild1}
 By uniform continuity of $P_{\eps_i},$ and the convergence property 
 just recalled
\begin{equation*}
 \lim_{i \to \infty} \| \widetilde{\psi}_{1,i} - {\xi_1} 
   P_{\eps_i} ( U ({\phi_2} \rest_{\Sigma} ) ) \|_{L^2 (M_1(\eps_i))} =0.
\end{equation*}
 On the other hand, ${\xi_1} P_{\eps_i} ( U({\phi_2} \rest_{\Sigma}) )$ 
 converges weakly to $0$ on the open manifold $M_1(0),$ 
 more precisely, for any fixed $\eta$ with $0 < \eta <1$ 
\begin{equation*}
 \lim_{i \to \infty} \| {\xi_1} P_{\eps_i} 
    ( U ( {\phi_2} \rest_{\Sigma} ) )\|_{L^2 (M_1(\eta))} =0.
\end{equation*}
\end{pro}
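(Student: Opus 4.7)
The plan is to handle the two assertions in sequence using the explicit formula \eref{Peps} for $P_{\eps}$ together with the uniform estimate \eref{contL2}.

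For the first convergence, observe that Proposition \ref{phi2} gives $\phi_2 \in \Dom(\mcD_2)$, so $\Pi_{\leq \frac{1}{2}} \circ U(\phi_2\rest_\Sigma) = 0$, which means $U(\phi_2\rest_\Sigma) \in \Im(\Pi_{>\frac{1}{2}})$. Hence one can rewrite
\begin{equation*}
\widetilde{\psi}_{1,i} - \xi_1 P_{\eps_i}(U(\phi_2\rest_\Sigma)) = \xi_1 P_{\eps_i}\bigl(\Pi_{>\frac{1}{2}}\circ U(\phi_{2,i}(1)) - U(\phi_2\rest_\Sigma)\bigr).
\end{equation*}
By \eref{contL2}, the $L^2(M_1(\eps_i))$-norm of the right-hand side is controlled by $C\,\|\Pi_{>\frac{1}{2}}\circ U(\phi_{2,i}(1)) - U(\phi_2\rest_\Sigma)\|_{L^2(\Sigma)}$, with $C$ independent of $i$. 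To close the argument I would establish strong $L^2(\Sigma)$-convergence of these traces: Proposition \ref{inkerD2} supplies $H^{\frac{1}{2}}(\Sigma)$-boundedness of $\wt{\phi}_{2,i}(1)$; Rellich compactness yields a subsequence converging strongly in $L^2(\Sigma)$; and the $L^2(M_2)$-convergence $\wt{\phi}_{2,i} \to \phi_2$ already obtained identifies the limit as $U(\phi_2\rest_\Sigma)$.

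For the second convergence, I would expand using the explicit formula \eref{Peps}. Decomposing $U(\phi_2\rest_\Sigma) = \sum_{\gamma > \frac{1}{2}} \sigma_\gamma$ along the eigenspaces of $A$, one obtains, for $i$ large enough so that $\eps_i < \eta$,
\begin{equation*}
\|\xi_1 P_{\eps_i}(U(\phi_2\rest_\Sigma))\|_{L^2(M_1(\eta))}^2 \leq \sum_{\gamma > \frac{1}{2}} \eps_i^{2\gamma -1} \int_\eta^1 r^{-2\gamma}\,dr \cdot \|\sigma_\gamma\|_{L^2(\Sigma)}^2.
\end{equation*}
Each factor is bounded by $(\eps_i/\eta)^{2\gamma-1}/(2\gamma-1)$ and, since $\eps_i/\eta<1$, is uniformly dominated by $(\eps_i/\eta)^{2\gamma_* - 1}/(2\gamma_*-1)$, where $\gamma_*$ is the smallest eigenvalue of $A$ exceeding $\frac{1}{2}$. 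Summing in $\gamma$ then produces a bound of order $(\eps_i/\eta)^{2\gamma_*-1}\|U(\phi_2\rest_\Sigma)\|^2_{L^2(\Sigma)}$, which tends to $0$ as $i \to \infty$.

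The main obstacle, as is typical for such cone-geometry arguments, is securing the strong $L^2(\Sigma)$-convergence of the boundary traces needed in the first step: it requires carefully chaining the $L^2(M_2)$-convergence from Proposition \ref{inkerD2}, the $H^{\frac{1}{2}}$-boundedness of traces inherited from Carron's ellipticity (Proposition \ref{D2}), and Rellich compactness, passing to a further subsequence compatible with those already extracted. Once this point is in place, both assertions follow directly from the uniform estimate \eref{contL2} on the invariant subspace $\Im(\Pi_{>\frac{1}{2}})$ and from the explicit eigenspace decomposition defining $P_\eps$.
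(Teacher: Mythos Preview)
Your argument is correct and follows precisely the route the paper itself indicates: the paper gives no separate proof of this proposition, treating it as an immediate consequence of the uniform bound \eref{contL2} and the trace convergence $\lim_i U(\phi_{2,i}(1)) = U(\phi_2(1))$ in $H^{\frac{1}{2}}(\Sigma)$ asserted just before the statement. Your Rellich-compactness justification of the trace convergence and your explicit eigenspace estimate for the second assertion simply fill in the details the paper omits.
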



We remark finally that the boundary value of 
  $\phi^{\leq - \frac{1}{2}}_{1, i}$ is small. 
 We introduce for this term the cut-off function taken in \cite{ACP}:
\begin{equation*}
 {\xi}_{\eps_i}(r)=
   \begin{cases}
     \qquad  1   & \text{if } \ 2 \sqrt{\eps_i} \le r, \\
   \dfrac{1}{\log \sqrt{\eps_i} } 
     \log \left( \dfrac{ 2 \eps_i }{r} \right)
            & \text{if } \ 2 {\eps_i} \le r \le 2 \sqrt{\eps_i}, \\
       \qquad  0   & \text{if } \ r \le 2{\eps_i}.
  \end{cases}
\end{equation*}
\begin{pro}
 $ \dlim_{i \to \infty} \left\| (1- {\xi}_{\eps_i} ) \, {\xi_1} 
   \phi^{\leq - \frac{1}{2}}_{1, i} \right\|_{L^2(M_1(\eps_i))} =0.$
\end{pro}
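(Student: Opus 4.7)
The plan is as follows. Since $1-\xi_{\eps_i}$ is supported in the thin annulus $\{\eps_i \le r \le 2\sqrt{\eps_i}\}$ where $\xi_1 \equiv 1$ for small $\eps_i$, the statement reduces to showing
\begin{equation*}
 \int_{\eps_i}^{2\sqrt{\eps_i}} \|\Pi_{\le -\frac{1}{2}}\sigma_{1,i}(r)\|^2_{L^2(\Sigma)}\,dr \longrightarrow 0 \quad \text{as } i \to \infty.
\end{equation*}
I would split $\Pi_{\le -\frac{1}{2}} = \Pi_{\le -1} + \Pi_{(-1,-\frac{1}{2}]}$ and treat each piece separately.

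For $\Pi_{\le -1}$ the key structural facts are $A+A^2 \ge 0$ and $A \le -1$ on this subspace. Applying the integration-by-parts identity of Section~4.1 to the component $\Pi_{\le -1}\sigma_{1,i}$, both the $(A+A^2)$-term and the boundary contribution $-\frac{1}{\eps_i}(\sigma_{1,i}(\eps_i),A\sigma_{1,i}(\eps_i))$ have favorable signs, so $\int_{\eps_i}^1 \|(\Pi_{\le -1}\sigma_{1,i})'\|^2_{L^2(\Sigma)}\,dr \le q(\phi_i) \le C$. Combining this with \lref{l1}, which yields $\|\Pi_{\le -1}\sigma_{1,i}(\eps_i)\|^2_{L^2(\Sigma)} = O(\eps_i)$, the fundamental theorem of calculus and Cauchy--Schwarz give $\|\Pi_{\le -1}\sigma_{1,i}(r)\|^2_{L^2(\Sigma)} \le C(\eps_i + r - \eps_i)$ on the annulus, whose integral over $[\eps_i, 2\sqrt{\eps_i}]$ is $O(\eps_i^{3/2}) + O(\eps_i)$ and so tends to $0$.

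For $\Pi_{(-1,-\frac{1}{2}]}$ only finitely many eigenvalues $\gamma$ of $A$ are involved, so I would work term by term. The Bessel-type expression~\eqref{sigma-gamma} for $\sigma_1^{\gamma}$ together with boundedness of $\|\phi_i\|_{L^2}$ controls the coefficients $c_{\gamma,i},d_{\gamma,i}$ uniformly (with the additional sharpening $\|d_{-\frac{1}{2},i}\|_{L^2(\Sigma)} = O(|\log\eps_i|^{-1/2})$ observed just after~\eqref{sigma1/2} at the critical exponent). A direct integration of $r^{2\gamma+2}$ and $r^{-2\gamma}$ (with a harmless logarithmic factor at $\gamma=-\frac{1}{2}$) over the small annulus $[\eps_i,2\sqrt{\eps_i}]$ then yields $O(\eps_i^{\delta})$ for some $\delta > 0$ depending on $\gamma$.

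The main obstacle is handling uniformly the \emph{infinitely many} eigenvalues $\gamma \le -1$: one cannot reduce to a finite number of Bessel-type expressions there. This is precisely what the structural inequality $A+A^2 \ge 0$, combined with the sign of $A$ at the boundary, is tailored for, replacing any spectral-by-spectral analysis on the infinite part by a single uniform $L^2$ derivative bound coming directly from the bounded quadratic form.
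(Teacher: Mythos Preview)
Your argument is correct, but it takes a noticeably longer route than the paper's. The paper does not split the spectral range at all: it simply observes that the Cauchy--Schwarz trick used to prove Lemmas~\ref{l1} and~\ref{l2} at the endpoint $r=\eps_i$ works equally well at \emph{any} radius $r\in(\eps_i,1)$, yielding the uniform pointwise bound $\|\Pi_{\le -\frac12}\,U(\xi_1\phi_{1,i})(r)\|_{L^2(\Sigma)}^2 \le C\,r\,|\log r|$ (this is exactly the estimate recorded later as \eqref{val-}). Integrating that bound over $[\eps_i,2\sqrt{\eps_i}]$ gives the result in one line, with no need for the Bessel structure or for any case distinction on $\gamma$.

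Your splitting $\Pi_{\le -1}+\Pi_{(-1,-\frac12]}$ does work, but it duplicates effort: for $\gamma\le -1$ you first extract a derivative bound from the integration-by-parts identity and then run the fundamental theorem of calculus, whereas the paper's direct argument already delivers $\|\sigma_1^\gamma(r)\|^2\le Cr/|2\gamma+1|$ uniformly in those $\gamma$. For $\gamma\in(-1,-\frac12]$ you invoke the Bessel expansion; this is fine (finitely many modes, bounded coefficients), but note that \eqref{sigma-gamma} is stated in the paper only for $|\gamma|\le\frac12$, so you are tacitly extending it. Also, the sharpening $\|d_{\cdot,i}\|=O(|\log\eps_i|^{-1/2})$ that you quote from after \eqref{sigma1/2} concerns $\gamma=+\tfrac12$, not $\gamma=-\tfrac12$; fortunately at $\gamma=-\tfrac12$ mere boundedness of the coefficients already suffices, since $\int_{\eps_i}^{2\sqrt{\eps_i}} r(\log r)^2\,dr\to 0$.

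In short: your proof is valid; the paper's is the same estimate obtained more economically, with a single uniform pointwise bound replacing both of your cases.
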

 This is a consequence of the estimates of Lemmas \ref{l1} and \ref{l2}:
 we remark that by the same argument, we obtain also
 $ \| {\xi_1} \phi^{\leq - \frac{1}{2}}_{1, i}(r) \|_{L^2(\Sigma)}
    \leq C \sqrt{r} $ so 
 $$ \left\| (1- {\xi}_{\eps_i}) {\xi_1} \phi^{\leq - \frac{1}{2}}_{1,i}
     \right\|_{L^2(M_1 (\eps_i))} = O ({\eps_i}^{\frac{1}{4}}).$$

\begin{pro}\label{psi1} The forms 
\begin{equation*}
\begin{split}
  \psi_{1, i} &= (1 - \xi_1) \phi_{1, i} + \left( 
  {\xi_1} \phi^{> \frac{1}{2}}_{1, i} - \widetilde{\psi}_{1, i} \right) 
   + \xi_{\eps_i} {\xi_1} \phi^{\leq - \frac{1}{2}}_{1, i} 
   + {\xi_1} U^{\ast} (\overline{\sigma}_{0, i}^{\frac{1}{2}} )
\end{split}
\end{equation*}
  belong to $\Dom (D_{1,\min})$ and define a bounded family. 
\end{pro}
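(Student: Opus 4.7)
The plan is to check that each of the four summands of $\psi_{1,i}$ belongs to $\Dom(D_{1,\min})$ and that its graph norm is uniformly bounded in $i$; the conclusion then follows by summation.

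The first summand $(1-\xi_1)\phi_{1,i}$ is supported in $\{\xi_1<1\}$, a compact subset of the smooth part of $\overline{M_1},$ and its uniform $H^1_0$-boundedness is exactly Lemma \ref{l0}, so it trivially defines a bounded family in $\Dom(D_{1,\min})$. For the second summand I would observe that, by definition (\ref{Peps}) of $P_{\eps_i}$ and the compatibility condition (\ref{recol0}), both $\xi_1\phi^{>\frac{1}{2}}_{1,i}$ and $\widetilde{\psi}_{1,i}$ take the common $L^2(\Sigma)$-value $\eps_i^{-\frac{1}{2}}\Pi_{>\frac{1}{2}}U\phi_{2,i}(1)$ at $r=\eps_i$, so the trace of their difference vanishes there. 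Hence the difference extends by zero through $\{r<\eps_i\}$ to a form supported in $\{r\geq \eps_i\}$, i.e.\ compactly contained in the smooth part, and lies in $\Dom(D_{1,\min})$. The graph-norm bound follows from the continuity estimate (\ref{contL2}) for $P_{\eps_i}$ combined with the a priori $H^1$-control of the eigenforms on the smooth part.

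The third summand $\xi_{\eps_i}\xi_1\phi^{\leq -\frac{1}{2}}_{1,i}$ is supported in $\{r\geq 2\eps_i\}$ by the very definition of $\xi_{\eps_i}$, so its membership in $\Dom(D_{1,\min})$ is again automatic. The main obstacle --- and the only genuinely delicate point of the proof --- is to control in $L^2$ the cutoff commutator $[D_1,\xi_{\eps_i}]\xi_1\phi^{\leq -\frac{1}{2}}_{1,i}$. On the transition band $[2\eps_i,2\sqrt{\eps_i}]$ the derivative $|\xi'_{\eps_i}|$ has size $O\bigl(1/(r|\log\eps_i|)\bigr)$, so one must verify that the logarithmic gain in the denominator dominates the growth of the components of $\phi^{\leq -\frac{1}{2}}_{1,i}$. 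This is achieved by using Lemma \ref{l1} for the components with eigenvalue $\leq -1$, Lemma \ref{l2} for those in $(-1,-\tfrac{1}{2}]$, together with the explicit Bessel expansions (\ref{sigma-gamma}): these give pointwise control of $\|\phi^{\leq -\frac{1}{2}}_{1,i}(r)\|_{L^2(\Sigma)}$ throughout the band, and integration against $|\xi'_{\eps_i}|^2$ yields a contribution of size $O(1/|\log\eps_i|)$, following the same scheme as in \cite{ACP}.

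Finally, the term $\xi_1 U^{\ast}(\overline{\sigma}^{\frac{1}{2}}_{0,i})$ comes directly from the decomposition furnished by Proposition \ref{phi1W}, in which this component was isolated precisely as the piece of $\xi_1\phi^{(-\frac{1}{2},\frac{1}{2}]}_{1,i}$ already lying in $\Dom(D_{1,\min})$ with uniformly bounded graph norm; multiplication by the smooth cut-off $\xi_1$ preserves both properties. Adding the four contributions yields $\psi_{1,i}\in\Dom(D_{1,\min})$ with a uniformly bounded graph norm, as required.
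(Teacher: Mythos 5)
Your decomposition into four terms and the term-by-term strategy match the paper's proof exactly; the treatments of the first and fourth summands (Lemma \ref{l0} and Proposition \ref{phi1W}) coincide, and your observation that the second summand has vanishing trace at $r=\eps_i$ (via \eqref{recol0} and the definition \eqref{Peps}) is the correct way to see it lies in $\Dom(D_{1,\min})$. Two points, however, deserve correction.

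For the second summand, the boundedness of the graph norm is not just ``continuity of $P_{\eps_i}$ plus $H^1$-control on the smooth part''; the essential mechanism is that $(\partial_r + A/r)$ annihilates $U P_{\eps_i}(\cdot)$, so applying $D_1$ to $\xi_1\phi^{>\frac{1}{2}}_{1,i}-\widetilde{\psi}_{1,i}$ produces only the term $\xi_1(\partial_r+\tfrac{A}{r})(U\phi^{>\frac{1}{2}}_{1,i})$ (bounded by the quadratic form) plus a commutator $\partial_r(\xi_1)\cdot U(\phi^{>\frac{1}{2}}_{1,i}-P_{\eps_i}(\Pi_{>\frac{1}{2}}\phi_{2,i}(1)))$, which is supported on the fixed band $[\tfrac12,1]$ and is controlled by \eqref{contL2}. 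Without noting this cancellation the argument does not close.

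For the third summand, your estimate is off. Lemmas \ref{l1} and \ref{l2} only give control of $\sigma^{\leq -\frac{1}{2}}_1$ at $r=\eps_i$, not throughout the transition band; what is actually needed is the pointwise-in-$r$ bound $\|\sigma^{\leq -\frac{1}{2}}_1(r)\|^2_{L^2(\Sigma)}\leq \Lambda\,r|\log r|$ for all $r\in(\eps_i,1]$, which follows from the quadratic-form bound and $\sigma_1(1)=0$ by the same fundamental-theorem-of-calculus argument as in those lemmas (this is exactly \eqref{val-}). Also, the Bessel expansions \eqref{sigma-gamma} concern only $\gamma\in[-\tfrac12,\tfrac12]$ and are of no use for the genuinely infinite family of eigenvalues $\leq -1$ appearing here. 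Finally, inserting $|\xi'_{\eps_i}|\sim \tfrac{2}{r|\log\eps_i|}$ into the band integral against $r|\log r|$ gives $\frac{4\Lambda}{|\log\eps_i|^2}\int_{\eps_i}^{\sqrt{\eps_i}}\frac{|\log r|}{r}\,dr \leq \tfrac{3}{2}\Lambda$, i.e.\ a uniform $O(1)$ bound, not $O(1/|\log\eps_i|)$ as you claim. Boundedness is all that is needed, so the conclusion survives, but the stated rate is incorrect.
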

\begin{proof}
 We will show that each term is bounded. 
 For the last one, it is a consequence of Proposition \ref{phi1W}.
 For the first one, it is already done in Lemma \ref{l0}.  
 For the second one, we remark that 
\begin{equation}\label{feps}
\begin{split}
 f_i &:= ( \partial_r + \frac{A}{r} ) U \left( {\xi_1} 
    \phi^{> \frac{1}{2}}_{1,i} - \wt{\psi}_{1, i} \right) \\
   &= {\xi_1} \big( \partial_r + \frac{A}{r} \big) 
     (U \phi^{> \frac{1}{2}}_{1,i} )+ \partial_r ( {\xi_1} )
      U \left( \phi^{> \frac{1}{2}}_{1,i}  - P_{\eps_i} 
      ( \Pi_{> \frac{1}{2}} \phi_{2,i}(1) ) \right)
\end{split}
\end{equation}
 is uniformly bounded in $L^2( \overline{M}_1),$ because of (\ref{contL2}).
 This estimate (\ref{contL2}) shows also that the $L^2$-norm of 
 $ \phi^{> \frac{1}{2}}_{1,i} - \tilde{\psi}_{1,i}$ is bounded.
 
 For the third one, we use the estimate due to the expression of the quadratic 
 form. The estimate that 
  $\int_{\mcC_{r,1}} | D_1({\xi_1} \phi^{\leq - \frac{1}{2}}) |^2 d \mu
   \le \Lambda$ gives that
\begin{equation}\label{val-}
  \Big\| \sigma^{\leq - \frac{1}{2}}_1(r) \Big\|^2_{L^2(\Sigma)} 
       \leq \Lambda r |\log r|
\end{equation}
 by the same argument as in Lemmas \ref{l1} and \ref{l2}. Now 
\begin{equation*}
\begin{split}
 \left\| D_1 (\xi_{\eps_i} {\xi_1} \phi^{\leq - \frac{1}{2}}_{1,i} )
     \right\|_{L^2( \overline{M}_1 )}
  &\leq  \left\| \xi_{\eps_i} D_1 ({\xi_1} \phi^{\leq - \frac{1}{2}}_{1,i}) 
     \right\|_{L^2( \overline{M}_1)}   +  \left\| |d \xi_{\eps_i} | \cdot 
      {\xi_1} \phi^{\leq - \frac{1}{2}}_{1,i} 
     \right\|_{L^2 ( \overline{M}_1)}  \\
  &\leq  \left\| D_1({\xi_1} \phi^{\leq - \frac{1}{2}}_{1,i} ) 
      \right\|_{L^2 (\mcC_{\eps_i,1}) } + \left\| |d \xi_{\eps_i}| \cdot 
        {\xi_1} \phi^{\leq - \frac{1}{2}}_{1, i}  
        \right\|_{L^2(\mcC_{\eps_i, \sqrt{\eps_i}})}.
\end{split}
\end{equation*}
 The first term is bounded and, with $|A| \geq \frac{1}{2}$ for this term, 
 and the estimate (\ref{val-}), we have 
\begin{align*}
 \left\| |d \xi_{\eps_i}| {\xi_1} \phi^{\leq - \frac{1}{2}}_{1, i} 
   \right\|^2_{L^2 (\mcC_{\eps_i, \sqrt{\eps_i}} )} 
   &\leq \frac{4 \Lambda}{| \log {\eps_i} |^2} 
           \int_{\eps_i}^{\sqrt{\eps_i}} \frac{\log r }{r} \, dr  
   \leq  \frac{3}{2} \Lambda.
\end{align*}
 This completes the proof.
\end{proof}

 In fact, the decomposition used here is almost orthogonal:
\begin{lem} \label{lem:almost-orthogonal}
  There exists $\beta>0$ such that
$$
 \big( \phi^{> \frac{1}{2}}_{1,i} - \widetilde{\psi}_{1,i}, \, 
    \widetilde{\psi}_{1,i} \big)_{L^2(M_1(\eps_i))} = O({\eps_i}^\beta).
$$
\end{lem}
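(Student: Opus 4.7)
\medskip
\noindent\textbf{Proof plan.}
Both forms are supported in the cone region $\mcC_{\eps_i,1}$, and by construction their $U$-transforms lie in $\Im(\Pi_{>\frac12})$. Since distinct eigenspaces of $A$ are pairwise orthogonal in $L^2(\Sigma)$, the first step is to decompose the inner product as a sum over eigenvalues,
\begin{equation*}
  \big(\phi^{>\frac{1}{2}}_{1,i} - \widetilde{\psi}_{1,i},\,\widetilde{\psi}_{1,i}\big)_{L^2(M_1(\eps_i))}
  \;=\; \sum_{\gamma\in\Spec(A),\,\gamma>\frac12} I_\gamma,
\end{equation*}
where
\begin{equation*}
  I_\gamma \;=\; \int_{\eps_i}^{1} \big\langle U u^\gamma(r) - \xi_1(r)\,\eps_i^{\gamma-\frac12}\,r^{-\gamma}\sigma_{\gamma,i},\; \xi_1(r)\,\eps_i^{\gamma-\frac12}\,r^{-\gamma}\sigma_{\gamma,i}\big\rangle_{L^2(\Sigma)}\,dr,
\end{equation*}
with $\sigma_{\gamma,i}:=\Pi_{\{\gamma\}}U\phi_{2,i}(1)$ and $u^\gamma$ the $\gamma$-component of $\phi^{>\frac12}_{1,i}$.

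For each such $\gamma$, I would substitute the Bessel expansion \eref{sigma-gamma}, namely $Uu^\gamma(r)=c_{\gamma,i}r^{\gamma+1}F_\gamma(\lambda_i r^2)+d_{\gamma,i}r^{-\gamma}G_\gamma(\lambda_i r^2)$, and apply the compatibility condition \eref{recol0} at $r=\eps_i$, which here reads $c_{\gamma,i}\eps_i^{2\gamma+1}F_\gamma(\lambda_i\eps_i^2)+d_{\gamma,i}G_\gamma(\lambda_i\eps_i^2)=\eps_i^{\gamma-\frac12}\sigma_{\gamma,i}$. Using this identity to eliminate $\sigma_{\gamma,i}$ in the difference $Uu^\gamma(r)-\eps_i^{\gamma-\frac12}r^{-\gamma}\sigma_{\gamma,i}$, the integrand of $I_\gamma$ splits into three explicit pieces: a regular contribution proportional to $c_{\gamma,i}r^{\gamma+1}F_\gamma$, a term involving the difference $G_\gamma(\lambda_i r^2)-G_\gamma(\lambda_i\eps_i^2)=O(\lambda_i(r^2-\eps_i^2))$ (whose analyticity supplies two extra powers of $r$ to compensate the $r^{-2\gamma}$ divergence), and a small $\eps_i^{2\gamma+1}$ remainder. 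Pairing each with $\xi_1\eps_i^{\gamma-\frac12}r^{-\gamma}\sigma_{\gamma,i}$ and integrating yields bounds of respective orders $\eps_i^{\gamma-\frac12}\|c_{\gamma,i}\|\|\sigma_{\gamma,i}\|$, $\eps_i^{2\gamma-1}\|\sigma_{\gamma,i}\|^2$ (after using $d_{\gamma,i}\approx \eps_i^{\gamma-\frac12}\sigma_{\gamma,i}$ from the matching), and $\eps_i^{\gamma+\frac32}\|c_{\gamma,i}\|\|\sigma_{\gamma,i}\|$. The crucial algebraic cancellation is that the leading singular term, of size $\|\sigma_{\gamma,i}\|^2/(2\gamma-1)$, is precisely $\|\widetilde{\psi}_{1,i}^\gamma\|_{L^2}^2$ and therefore disappears from the difference $(u-v,v)$.

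The main obstacle is summing over the infinite family $\{\gamma\in\Spec(A),\,\gamma>\frac12\}$. Since $\Sigma$ is compact and $A$ is elliptic, its spectrum is discrete, so a smallest eigenvalue $\gamma_0>\frac12$ exists; setting $\beta:=\gamma_0-\frac12>0$, each of the three estimates above is dominated by $C\eps_i^{\beta}$ times norms of the coefficients. The summability follows from three Parseval-type bounds: $\sum_\gamma\|\sigma_{\gamma,i}\|^2\leq\|U\phi_{2,i}(1)\|^2_{L^2(\Sigma)}$ is uniformly bounded via the trace theorem and \pref{D2}; $\sum_\gamma\eps_i^{1-2\gamma}\|d_{\gamma,i}\|^2\leq C$ and $\sum_\gamma\|c_{\gamma,i}\|^2\leq C$ follow from $\|\phi_{1,i}\|_{L^2}\leq 1$ combined with \eref{contL2} and the orthogonal cone expansion. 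A final Cauchy--Schwarz then gives $\sum_\gamma|I_\gamma|=O(\eps_i^{\beta})$, which is the asserted estimate.
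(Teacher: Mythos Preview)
Your route is genuinely different from the paper's, and the summability step contains a gap.

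The paper does \emph{not} use the Bessel expansion for $\gamma>\tfrac12$ (the expression \eref{sigma-gamma} is only stated for $|\gamma|\le\tfrac12$). Instead it exploits the first--order structure: the difference $\phi_0:=\xi_1\phi^{>\frac12}_{1,i}-\widetilde{\psi}_{1,i}$ vanishes at $r=\eps_i$ and satisfies $(\partial_r+\tfrac{A}{r})U\phi_0=f_i$ with $f_i$ given by \eref{feps}, which is uniformly bounded in $L^2(\mcC_{\eps_i,1})$. For each eigenvalue $\gamma>\tfrac12$ this yields the integral representation
\[
\phi_0^\gamma(r)=r^{-\gamma}\int_{\eps_i}^{r}\rho^{\gamma}f^\gamma(\rho)\,d\rho,
\]
and pairing with $\widetilde{\psi}^{\gamma}_{1,i}=\xi_1\,\eps_i^{\gamma-\frac12}r^{-\gamma}\sigma_\gamma$ is handled by one integration by parts in $r$ and Cauchy--Schwarz, giving
\[
\bigl|(\phi_0^\gamma,\widetilde{\psi}^{\gamma}_{1,i})\bigr|\le C\,\eps_i^{\beta}\,\|\sigma_\gamma\|_{L^2(\Sigma)}\,\|f^\gamma\|_{L^2(\mcC_{\eps_i,1})}.
\]
Summation over $\gamma$ is then immediate from Parseval, since $\sum_\gamma\|\sigma_\gamma\|^2=\|\Pi_{>\frac12}U\phi_{2,i}(1)\|^2$ and $\sum_\gamma\|f^\gamma\|^2=\|f_i\|^2$ are both uniformly bounded. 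The cut-off $\xi_1$ is absorbed into $f_i$ once and for all.

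In your approach the critical claim $\sum_{\gamma>\frac12}\|c_{\gamma,i}\|^2\le C$ does not follow from $\|\phi_{1,i}\|_{L^2}\le1$ together with \eref{contL2}: the map $u^\gamma\mapsto(c_{\gamma,i},d_{\gamma,i})$ is not uniformly well-conditioned in $\gamma$, because $\|r^{\gamma+1}F_\gamma\|_{L^2(0,1)}\sim(2\gamma+3)^{-1/2}\to0$ and the two branches $r^{\gamma+1}F_\gamma$, $r^{-\gamma}G_\gamma$ are not $L^2$-orthogonal on $(\eps_i,1)$. To control $c_{\gamma,i}$ uniformly you would need the quadratic-form bound $q_{\eps_i}(\phi_i)\le C$ (the branch $r^{-\gamma}G_\gamma$ is nearly annihilated by $\partial_r+\gamma/r$, so the $c$-part dominates $D_1 u^\gamma$), but that is precisely the information encoded in $f_i$, and extracting it through the Bessel coefficients---while also tracking the $F_\gamma,G_\gamma$ corrections and the cut-off region $r\in[\tfrac12,1]$ separately---is considerably more circuitous than the paper's ODE argument.
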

\subsubsection*{Proof of Lemma \ref{lem:almost-orthogonal}}\tir 
 If we decompose the terms under the eigenspaces of $A$, we see that only
 the eigenvalues in $(\frac{1}{2}, \infty )$ are involved.
 With  $f_{i} = \sum_{\gamma > \frac{1}{2}} f^{\gamma}$ and 
 $ U( \phi^{> \frac{1}{2}}_{1, i} - \wt{\psi}_{1,i} )
      = \sum_{\gamma > \frac{1}{2}} \phi_0^{\gamma},$ 
 the equation (\ref{feps}) and the fact that 
  $(\phi^{> \frac{1}{2}}_{1,i} - \wt{\psi}_{1,i}) (\eps_i)=0$ 
  imply 
$$
 \phi_0^{\gamma}(r) = r^{-\gamma} \int_{\eps_i}^r 
       \rho^{\gamma} f^{\gamma}(\rho) \, d \rho.
$$
 Then for each eigenvalue $\gamma > \frac{1}{2}$ of $A$
\begin{align*}
 (\phi_0^{\gamma}, \, \tilde{\psi}_{1,i}^{\gamma} 
    )_{L^2 (\mcC_{\eps_i,1})}
  &= {\eps_i}^{\gamma - \frac{1}{2}} \int_{\eps_i}^1 r^{-2 \gamma}
       \int_{\eps_i}^r \rho^{\gamma} 
         (\sigma_{\gamma}, f^{\gamma}(\rho) )_{L^2(\Sigma)} \, d \rho \\
  &=  {\eps_i}^{\gamma - \frac{1}{2}} \int_{\eps_i}^1 
        \frac{r^{-2 \gamma +1} }{2 \gamma-1} \cdot r^{\gamma} \cdot 
       (\sigma_{\gamma}, \, f^\gamma(r) )_{L^2(\Sigma)} \, dr  \\ 
  &\hspace{1.5cm} + \frac{ {\eps_i}^{\gamma - \frac{1}{2}} }{2\gamma -1}
    \int_{\eps_i}^1 \rho^{\gamma} 
      (\sigma_{\gamma}, \, f^{\gamma}(\rho) )_{L^2(\Sigma)} \, d \rho.
\end{align*}
 Thus, if $\gamma > \frac{3}{2},$ we have the upper bound 
\begin{align*}
  | (\phi_0^{\gamma}, \, \wt{\psi}_{1,i}^{\gamma} 
     )_{ L^2 (\mcC_{{\eps_i},1}) } |
  &\leq  {\eps_i}^{\gamma - \frac{1}{2}} \int_{\eps_i}^1 
     \frac{r^{- \gamma+1}}{2\gamma -1} 
      | (\sigma_{\gamma}, \, f^{\gamma}(r) )_{L^2(\Sigma)} | \, dr \\
  &\hspace{1cm} + \frac{{\eps_i}^{\gamma- \frac{1}{2}} }{
      (2\gamma-1) \sqrt{2 \gamma +1} } \| \sigma_{\gamma} \|_{L^2 (\Sigma)}
       \cdot \| f^{\gamma} \|_{L^2(\mcC_{{\eps_i},1})} \\
  &\leq C {\eps_i}^{\gamma - \frac{1}{2}} \| \sigma_{\gamma} \|_{L^2 (\Sigma)}
      \frac{ {\eps_i}^{ \frac{-2 \gamma+3}{2} } }{ (2 \gamma -1)
      \sqrt{2\gamma-3} } \| f^{\gamma} \|_{L^2(\mcC_{{\eps_i},1}) } \\
 &\hspace{1cm} + \frac{{ \eps_i}^{\gamma - \frac{1}{2}} 
    }{ (2 \gamma -1)\sqrt{2 \gamma +1}} \| \sigma_{\gamma} \|_{L^2 (\Sigma)} 
    \cdot  \| f^{\gamma} \|_{L^2(\mcC_{\eps_i,1})},
\end{align*}
 while, for $\gamma = \frac{3}{2}$ the first term is 
  $O ( {\eps}_i \sqrt{ |\log \eps_i| } )$ and 
  for $\frac{1}{2} < \gamma < \frac{3}{2},$ 
  it is $O( {\eps_i}^{\gamma - \frac{1}{2}} ).$
 In short, we have 
\begin{equation*} 
 | (\phi_0^{\gamma}, \, \widetilde{\psi}_{1, i}^{\gamma} 
   )_{L^2(\mcC_{\eps_i,1})} |
   \leq C {\eps_i}^{\beta} \| \sigma_{\gamma} \|_{L^2 (\Sigma)} 
      \cdot \| f^{\gamma} \|_{L^2( \mcC_{\eps_i,1} ) },
\end{equation*}
 if $\beta>0$ satisfies $\gamma \geq \beta + \frac{1}{2}$ for all eigenvalues 
 $\gamma$ of $A$ in $(\frac{1}{2}, \infty).$
 This estimate gives Lemma \ref{lem:almost-orthogonal}. 
 \hspace{12.5cm}  $\square$
\begin{cor}\label{conc}
 There exists from 
 $\{ \psi_{1, i} + \phi^{(-\frac{1}{2}, \frac{1}{2})}_{1,i} \}_i$ 
 a subfamily which converges in $L^2$ to a form $\phi_1 \in \Dom(D_{1,W})$ 
 which satisfies on the open manifold $M_1(0)$ the equation
    $ \Delta \phi_1 = \lambda \phi_1.$
  Moreover, 
\begin{equation}\label{decompose}
  \| \phi_1 \|^2_{L^2(M_1(0)) } + \| \wt{\phi_2} \|^2_{L^2(\wt{M}_2)}  + 
         \| \overline{\sigma}_{\frac{1}{2}} \|^2_{L^2(\Sigma)} = 1,
\end{equation}
 where $\wt{\phi_2}$ is the prolongation of ${\phi_2}$ by 
  $P_2( {\phi_2} \rest_{\Sigma} )$ on $\wt{M_2}$, and 
 $  \overline{\sigma}_{\frac{1}{2}} $ given by Proposition $\ref{phi1W}$.
\end{cor}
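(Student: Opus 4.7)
The plan is to assemble the three convergence results already established (Propositions \ref{psi1}, \ref{phi1W}, and \ref{phi2}) into a coherent description of the limit $\phi_1$, then verify that $\phi_1$ lies in $\Dom(D_{1,W})$ and satisfies the eigenvalue equation on $M_1(0)$, and finally compute the total $L^2$-norm by decomposing the cone part of $\phi_{1,i}$ along the spectrum of $A$.

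First I would extract the bulk convergent piece. By Proposition \ref{psi1} the family $\{\psi_{1,i}\}$ is bounded in $\Dom(D_{1,\min})$, so the Rellich property recalled at the end of Section \ref{GBconic} yields, along a subsequence, strong $L^2$-convergence $\psi_{1,i}\to\psi_1$ with $\psi_1\in\Dom(D_{1,\min})$. A further subsequence extraction via Proposition \ref{phi1W} provides $L^2$-convergence $\xi_1 \overline{\sigma}^{<1/2}_{1,i}\to\xi_1 U^\ast\bigl(\sum_{|\gamma|<\frac12}r^{-\gamma}\sigma_\gamma\bigr)$ with $\sum_\gamma\sigma_\gamma\in W$, while the concentrating term $\overline{\sigma}^{1/2}_{1,i}$ contributes no $L^2$-limit on $M_1(0)$. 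Setting $\phi_1:=\psi_1+\xi_1 U^\ast\bigl(\sum_{|\gamma|<\frac12}r^{-\gamma}\sigma_\gamma\bigr)$, the $\mcL$-image of $\phi_1$ equals $\sum_\gamma\sigma_\gamma\in W$ by construction, so the characterisation $\Dom(D_{1,W})=\mcL^{-1}(W)$ places $\phi_1\in\Dom(D_{1,W})$. For the eigenvalue equation, each $\phi_{1,i}$ satisfies $\Delta\phi_{1,i}=\lambda_i\phi_{1,i}$ on $M_1(\eps_i)$; on any relatively compact open $K$ with closure in $M_1(0)$ one has eventually $K\subset M_1(\eps_i)$, and elliptic regularity upgrades the $L^2$-convergence on $K$ to $C^\infty$-convergence, so letting $\lambda_i\to\lambda$ gives $\Delta\phi_1=\lambda\phi_1$ on $M_1(0)$.

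For the norm identity \eref{decompose}, I would start from $1=\|\phi_{1,i}\|^2_{L^2(M_1(\eps_i))}+\|\phi_{2,i}\|^2_{L^2(M_2)}$ and decompose $\sigma_{1,i}$ on the cone $\mcC_{\eps_i,1}$ along the eigenspaces of $A$. The components with $\gamma\leq-\frac12$ disappear in the limit by Lemma \ref{l1} and the pointwise estimate \eref{val-}; those with $|\gamma|<\frac12$ contribute the cone portion of $\|\phi_1\|^2$ via Proposition \ref{phi1W}; the concentrating $\gamma=\frac12$ piece has $L^2$-norm $|\log\eps_i|^{-1}\int_{\eps_i}^1 r^{-1}\,dr\cdot\|\overline{\sigma}_{1/2}\|^2_{L^2(\Sigma)}=\|\overline{\sigma}_{1/2}\|^2_{L^2(\Sigma)}$ in the limit; and the $\gamma>\frac12$ part, after splitting off $\wt{\psi}_{1,i}$ (whose complement sits inside $\psi_{1,i}$ and contributes to $\|\phi_1\|^2$), satisfies $\|\wt{\psi}_{1,i}\|^2_{L^2(\mcC_{\eps_i,1})}=\|P_2(\Pi_{>1/2}U\phi_{2,i}(1))\|^2_{L^2(\mcC_{1,1/\eps_i})}\to\|\wt{\phi}_2\|^2_{L^2(\mcC_{1,\infty})}$ by the isometry identity \eref{contL2}, Proposition \ref{inkerD2} and Proposition \ref{phi2}. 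Combined with $\|\phi_{2,i}\|^2_{L^2(M_2)}\to\|\phi_2\|^2_{L^2(M_2)}$ this reproduces $\|\wt{\phi}_2\|^2_{L^2(\wt{M}_2)}$, and together with the $\phi_1$- and $\overline{\sigma}_{1/2}$-contributions gives \eref{decompose}.

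The main obstacle I expect is ensuring that cross-terms between these pieces vanish so the contributions add cleanly. The $L^2$-orthogonality of distinct $A$-eigenspaces on each fibre $\{r\}\times\Sigma$ handles the cross-terms between different spectral slices inside the cone; Lemma \ref{lem:almost-orthogonal} dispatches the cross-term between $\wt{\psi}_{1,i}$ and $\phi^{>1/2}_{1,i}-\wt{\psi}_{1,i}$; and the weak vanishing $\|\xi_1 P_{\eps_i}(U\phi_2\rest_\Sigma)\|_{L^2(M_1(\eta))}\to 0$ for every fixed $\eta>0$ from Proposition \ref{tild1} shows that the concentrating pieces $\wt{\psi}_{1,i}$ and $\overline{\sigma}^{1/2}_{1,i}$ are asymptotically orthogonal to the bulk limit $\phi_1$. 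Once these orthogonalities are established, the Pythagorean sum produces exactly \eref{decompose}.
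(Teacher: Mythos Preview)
Your proposal follows essentially the same strategy as the paper: extract a convergent subsequence via the Rellich property on $\Dom(D_{1,\max})$, identify the limit $\phi_1$ and its membership in $\Dom(D_{1,W})$ through Proposition~\ref{phi1W}, and then account for the norm via the almost-orthogonality Lemma~\ref{lem:almost-orthogonal} and the identities \eref{contL2} and \eref{p2phi2}. Your treatment of the eigenvalue equation by interior elliptic regularity and of the cross-terms is more explicit than the paper's, which is terse on these points.

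There is, however, one imprecision in your norm bookkeeping. You write that ``the components with $\gamma\le -\tfrac12$ disappear in the limit by Lemma~\ref{l1} and the pointwise estimate \eref{val-}.'' This is not correct as stated: the estimate \eref{val-} gives only $\|\sigma_1^{\le -1/2}(r)\|^2_{L^2(\Sigma)}\le \Lambda\, r|\log r|$, so the integral $\int_{\eps_i}^1\|\sigma_1^{\le -1/2}(r)\|^2\,dr$ is bounded but need not tend to $0$. In fact these components do \emph{not} disappear---they are absorbed into $\phi_1$ through the term $\xi_{\eps_i}\xi_1\phi^{\le -1/2}_{1,i}$ of $\psi_{1,i}$. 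What does vanish is only the tail $(1-\xi_{\eps_i})\xi_1\phi^{\le -1/2}_{1,i}$, by the Proposition immediately preceding Proposition~\ref{psi1}. The paper avoids this pitfall by not slicing $\|\phi_{1,i}\|^2$ along $\Spec(A)$ directly; instead it computes the defect
\[
1-\bigl\{\|\phi_1\|^2_{L^2(M_1(0))}+\|\phi_2\|^2_{L^2(M_2)}\bigr\}
=\lim_{i\to\infty}\Bigl\{\|\widetilde{\psi}_{1,i}\|^2_{L^2(M_1(\eps_i))}
+\bigl\|\xi_1 U^\ast\bigl(|\log\eps_i|^{-1/2}r^{-1/2}\overline{\sigma}_{1/2}\bigr)\bigr\|^2_{L^2(M_1(\eps_i))}\Bigr\},
\]
using Lemma~\ref{lem:almost-orthogonal} to kill the cross-term with $\phi^{>1/2}_{1,i}-\widetilde{\psi}_{1,i}$, and then evaluates the two remaining limits as $\|P_2(U\phi_2\rest_\Sigma)\|^2_{L^2(\widetilde{M}_2)}$ and $\|\overline{\sigma}_{1/2}\|^2_{L^2(\Sigma)}$ respectively. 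Your argument is easily repaired by adopting this ``defect'' viewpoint rather than attempting a direct $A$-spectral partition of the total norm.
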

\begin{proof}
  Indeed, the family 
   $\{ \psi_{1,i} + \phi^{(-\frac{1}{2}, \frac{1}{2})}_{1,i} \}_i$ 
 is bounded in $\Dom (D_{1, \max})$, one can then extract a subfamily 
 which converges in $L^2 (\overline{M}_1, \overline{g}_1)$.
 But we know that $\widetilde{\psi}_{1,i}$ converges to $0$
  in any $M_1(\eta)$, the conclusion follows. 
  We obtain also, with the help of Lemma \ref{lem:almost-orthogonal} that 
\begin{equation*}
\begin{split}
  1- & \big\{  \| \phi_1 \|^2_{L^2( M_1(0)) } + 
     \|\phi_2 \|^2_{L^2 (M_2)}  \big\}  \\
  &= \dlim_{i \to \infty} 
    \big\{ \| \widetilde{\psi}_{1, i} \|^2_{L^2(M_1(\eps_i))} + 
       \Big\| {\xi_1} U^\ast( \frac{1}{\sqrt{ |\log \eps_i|} } 
       r^{- \frac{1}{2}} \overline{\sigma}_{\frac{1}{2}} ) 
       \Big\|^2_{L^2(M_1(\eps_i))} \big\}.
\end{split}
\end{equation*}
   We remark that, by Proposition \ref{tild1}, $\phi_2 =0$ implies 
 $ \dlim_{i \to \infty} \| \widetilde{\psi}_{1,i} 
   \|_{L^2(M_1(\eps_i))} =0. $
 In fact, one has by (\ref{contL2})
\begin{equation}\label{p2phi2}
  \lim_{i \to \infty} \| \widetilde{\psi}_{1,i} \|_{L^2(M_1(\eps_i))}
       = \| P_2(U{\phi_2} \rest_{\Sigma} ) \|_{L^2( \wt{M}_2) }.
\end{equation}

 Finally, one has
\begin{equation}
  \lim_{i \to \infty} \Big\| {\xi_1} U^{\ast} 
    ( \frac{1}{\sqrt{|\log \eps_i|}} r^{- \frac{1}{2}} 
     \overline{\sigma}_{\frac{1}{2}} )  \Big\|_{L^2 (M_1(\eps_i) )}
     = \| \overline{\sigma}_{\frac{1}{2}} \|_{L^2(\Sigma)}.
\end{equation}
\end{proof}
\subsection{Lower bound, the end.}

 Let us now $\{ \phi_1(\eps), \dots, \phi_N(\eps) \}$ be 
 an orthonormal family of eigenforms of the Hodge-de Rham operator, 
 associated with the eigenvalues 
 $\lambda_1(\eps), \dots, \lambda_N(\eps)$. 
 We can make the same procedure of extraction for all the families. 
 This gives, in the limit domain, a family
 $\{ (\phi^j_{1}, {\phi^j_{2}}, \overline{\sigma}^j_{\frac{1}{2}} 
 ) \}_{1 \leq j \leq N}.$ 
 We already know by \cref{conc} that each element has norm $1$. 
 If we show that they are orthogonal, then we are done, 
 by applying the min-max formula to the limit problem \eref{limitpb}.
\begin{lem}
  The limit family is orthonormal in $\mcH_{\infty}.$
\end{lem}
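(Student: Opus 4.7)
The starting point is the Kronecker orthogonality on $M_\eps$: for $j \neq k$, $\langle \phi_j(\eps), \phi_k(\eps)\rangle_{L^2(M_\eps)} = 0$, which, after splitting across the gluing boundary with the scaling conventions of Section 3, reads
\begin{equation*}
 0 = \langle \phi_{1,j}(\eps), \phi_{1,k}(\eps)\rangle_{L^2(M_1(\eps))}
     + \langle \phi_{2,j}(\eps), \phi_{2,k}(\eps)\rangle_{L^2(M_2)}.
\end{equation*}
The plan is to pass to the limit along the subsequence already extracted in the previous subsection and to identify each surviving contribution with one of the three terms of the $\mcH_\infty$-inner product of the triples $(\phi_1^j, \widetilde{\phi}_2^j, \overline{\sigma}_{\frac{1}{2}}^j)$ and $(\phi_1^k, \widetilde{\phi}_2^k, \overline{\sigma}_{\frac{1}{2}}^k)$.

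On the $M_1$ side I will insert the decomposition of $\xi_1\phi_{1,i}^j$ coming from Propositions \ref{phi1W}, \ref{tild1} and \ref{psi1}, namely
\begin{equation*}
 \xi_1 \phi_{1,i}^j = \big( \psi_{1,i}^j + \phi_{1,i}^{j,(-\frac{1}{2},\frac{1}{2})} \big)
     + \widetilde{\psi}_{1,i}^j + |\log \eps_i|^{-\frac{1}{2}} \xi_1 \,
        U^{\ast}\big( r^{-\frac{1}{2}} \overline{\sigma}_{\frac{1}{2}}^j \big) + o(1),
\end{equation*}
together with the piece of $\phi_{1,i}^j$ on $M_1$ away from the cone (bounded in $H^1$ by Lemma \ref{l0}), and expand the inner product bilinearly. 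By \cref{conc}, the combination $\psi_{1,i}^j + \phi_{1,i}^{j,(-\frac{1}{2},\frac{1}{2})}$ converges in $L^2(\overline{M_1})$ to $\phi_1^j$, so those cross-products yield $\langle \phi_1^j, \phi_1^k\rangle_{L^2(\overline{M_1})}$ in the limit. The $\widetilde\psi_{1,i}$-pieces on $M_1(\eps_i)$, glued to the integral on $M_2$ via the prolongation identity of Section \ref{onM2} and the convergence \eref{p2phi2}, reassemble into $\langle \widetilde\phi_2^j, \widetilde\phi_2^k\rangle_{L^2(\widetilde{M_2})}$. Finally, a direct calculation of $|\log \eps_i|^{-1} \int_{\eps_i}^1 r^{-1}\,dr \to 1$ shows that the $\overline\sigma_{\frac{1}{2}}$-concentrating modes contribute $\langle \overline{\sigma}_{\frac{1}{2}}^j, \overline{\sigma}_{\frac{1}{2}}^k\rangle_{L^2(\Sigma)}$.

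All remaining cross-terms must vanish. The cross of $\psi_{1,i}^k + \phi_{1,i}^{k,(-\frac{1}{2},\frac{1}{2})}$ with $\widetilde\psi_{1,i}^j$ is killed by the weak-$L^2$ convergence of $\widetilde\psi_{1,i}^j$ to zero on any fixed $M_1(\eta)$ from Proposition \ref{tild1}, combined with an almost-orthogonality estimate near the tip of the same flavour as Lemma \ref{lem:almost-orthogonal}, exploiting that $\widetilde\psi_{1,i}^j$ has all its cone components in $\Im(\Pi_{>\frac{1}{2}})$ while the other factor, modulo terms in $\Dom(D_{1,\min})$, lies in $\Im(\Pi_{[-\frac{1}{2},\frac{1}{2}]})$. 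The cross of $\widetilde\psi_{1,i}^j$ with the $\overline{\sigma}_{\frac{1}{2}}$-mode, and the cross of $\psi_{1,i}^k + \phi_{1,i}^{k,(-\frac{1}{2},\frac{1}{2})}$ with it, are treated analogously: each spectral eigenmode of $A$ decouples, and the logarithmic weight $|\log\eps_i|^{-\frac{1}{2}}$ forces the resulting integral $\int_{\eps_i}^1 r^{-\gamma-\frac{1}{2}}\,dr$ against every $\gamma \neq \frac{1}{2}$ to vanish in the limit. Summing these identifications yields, for $j \neq k$,
\begin{equation*}
 0 = \langle \phi_1^j, \phi_1^k\rangle_{L^2(\overline{M_1})}
    + \langle \widetilde\phi_2^j, \widetilde\phi_2^k\rangle_{L^2(\widetilde{M_2})}
    + \langle \overline{\sigma}_{\frac{1}{2}}^j,
             \overline{\sigma}_{\frac{1}{2}}^k\rangle_{L^2(\Sigma)},
\end{equation*}
which, together with the normalization \eref{decompose}, is the desired orthonormality in $\mcH_\infty$. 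The main obstacle I expect is the clean separation of the $\overline\sigma_{\frac{1}{2}}$-concentrating mode from the $P_\eps$-prolongation $\widetilde\psi_{1,i}$: both concentrate their $L^2$-mass at the tip and come mixed in the spectral decomposition of $\phi_{1,i}^{(-\frac{1}{2},\frac{1}{2})}$, and only the $B$-decoupling already carried out in the proof of Proposition \ref{phi1W}, together with the closedness of $T(\Im(\Pi_{>\frac{1}{2}}) \cap H^{\frac{1}{2}}(\Sigma))$ from Proposition \ref{Bphi}, is strong enough to guarantee that their cross-product vanishes at the required logarithmic rate.
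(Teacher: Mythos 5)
Your proposal is correct and follows essentially the same route as the paper: decompose each eigenform into the three blocks $\Phi^j_\eps$, $\widetilde\Phi^j_\eps$, $\overline\Phi^j_\eps$, invoke Lemma \ref{lem:almost-orthogonal} (via polarization) to kill the $\Phi$--$\widetilde\Phi$ cross terms, use \eref{p2phi2} to reassemble $\widetilde\Phi$ with the $M_2$ piece into $L^2(\widetilde M_2)$, and then pass the orthogonality relation on $M_\eps$ to the limit. The paper's proof is considerably terser and leaves the cross terms involving the $\overline\sigma_{\frac{1}{2}}$-concentrating mode implicit; your discussion of these (spectral decoupling for $\gamma\neq\frac12$, weak concentration at the tip against the bounded $\Dom(D_{1,\min})$ piece at $\gamma=\frac12$, and the exact computation $|\log\eps_i|^{-1}\int_{\eps_i}^1 r^{-1}\,dr=1$) fills in details the paper compresses into the phrase ``up to terms converging to zero.''
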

\begin{proof}
 If we follow the procedure for one index, up to terms converging
  to zero, we had decomposed the eigenforms $\phi_j(\eps)$ 
  on $M_{\eps}$ into three terms:
\begin{equation*}
\begin{split}
  \Phi^j_{\eps} &= \psi_{1, i} + \phi^{(-\frac{1}{2}, 
       \frac{1}{2})}_{1, i}, \\
  \widetilde{\Phi}^j_{\eps} &= \widetilde{\psi}_{1, i}, \\
  \overline{\Phi}^j_{\eps} &= U^{\ast} \Big( \frac{1}{ \sqrt{|\log \eps|} }
      r^{- \frac{1}{2}} \overline{\sigma}^j_{\frac{1}{2}} \Big).
\end{split}
\end{equation*}
 Let $a \neq b$ be two indices.
 If we apply \lref{lem:almost-orthogonal} to any linear combination of 
 $\phi_a (\eps)$ and  $\phi_b (\eps),$ we obtain that
\begin{equation*}
\begin{split}
 \lim_{i \to \infty} \left\{ ( \Phi^a_{\eps_i}, \, 
       \widetilde{\Phi^b}_{\eps_i} )_{L^2(M_1(\eps_i))} + 
  ( \Phi^b_{\eps_i}, \, \widetilde{\Phi^a}_{\eps_i} 
     )_{L^2(M_1(\eps_i))} \right\}   &=0.
\end{split}
\end{equation*}
  If we apply \eref{p2phi2}, we obtain
\begin{equation*}
\begin{split}
  \lim_{i \to \infty} \left\{ ( \widetilde{\Phi^a}_{\eps_i}, 
   \widetilde{\Phi^b}_{\eps_i} )_{L^2(M_1(\eps_i))} +
   ( \phi^a_{2,\eps}, \phi^b_{2,\eps} )_{L^2(M_2)} \right\} &=
  ( \widetilde{\phi}^a_{2}, \widetilde{\phi}^b_{2} )_{L^2(\wt{M_2})}.
\end{split}
\end{equation*}
 Then finally, from $( \phi_a(\eps), \phi_b(\eps) )_{L^2 (M_{\eps})} =0$, 
 we conclude that 
\begin{equation*}
  (\phi^a_{1}, \phi^b_{1} )_{L^2(\barM)} + 
  (\phi^a_{2}, \phi^b_{2} )_{L^2(\wt{M_2})} + 
  (\overline{\sigma}^a_{\frac{1}{2}}, 
       \overline{\sigma}^b_{\frac{1}{2}} )_{L^2(\Sigma)} = 0.
\end{equation*}
\end{proof}

\begin{pro}\label{limit}
 The multiplicity of $0$ in the limit spectrum is given by the sum 
  $$ \dim \Ker (\Delta_{1,W}) + \dim \Ker (\mcD_2) + i_{\frac{1}{2}}, $$
 where $i_{\frac{1}{2}}$ denotes the dimension of the vector space 
 $\mcI_{\frac{1}{2}},$  see $\eqref{I1/2}$, of extended solutions 
 $\omega$ on $\widetilde{M_2}$ introduced by Carron \cite{C}, 
 corresponding to a boundary term on restriction to $r=1$ 
 with non-trivial component in $\Ker(A- \frac{1}{2}).$

 If the limit value $\lambda \neq 0,$ then it belongs to the positive 
 spectrum of the Hodge-de Rham operator $\Delta_{1,W}$ on $\overline{M_1},$ 
 with the space $W$ defined in $\eqref{defW}$.
\end{pro}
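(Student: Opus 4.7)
The plan is to assemble the statement from the upper and lower bound results already established, applied to the min-max characterization of the eigenvalues of the limit operator $\Delta_{1,W} \oplus 0 \oplus 0$ on the Hilbert space $\mcH_{\infty} = L^2(\overline{M_1}) \oplus \Ker(\widetilde{D_2}) \oplus \mcI_{\frac{1}{2}}$.

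First I would address the multiplicity of $0$. The kernel of the limit operator is, by construction, isomorphic to $\Ker(\Delta_{1,W}) \oplus \Ker(\mcD_2) \oplus \mcI_{\frac{1}{2}}$ (using the identification $\Ker(\mcD_2) \cong \Ker(\widetilde{D_2})$ from \pref{D2}), of the claimed dimension $d := \dim \Ker(\Delta_{1,W}) + \dim \Ker(\mcD_2) + i_{\frac{1}{2}}$. The upper bound argument of Section \ref{upper} provides, for each summand, a transplantation into $H^1(M_\eps)$ whose Rayleigh quotient tends to $0$: harmonic forms on $\overline{M_1}$ are handled as in the construction of $\phi_\eps$ with $\lambda=0$; harmonic forms of $\mcD_2$ are prolonged by $(\xi_1 P_\eps(U(\psi_2)\rest_\Sigma), \psi_2)$; and each class in $\mcI_{\frac{1}{2}}$ yields the quasi-mode $\psi_\eps$ of \eqref{1/2}, whose Rayleigh quotient is $O(|\log\eps|^{-1})$. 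By \lref{lem:almost-orthogonal} and the almost-orthogonality estimates already used in Section \ref{upper}, these $d$ quasi-modes form an almost orthonormal family, so $\limsup_{\eps\to 0}\lambda_d(\eps) = 0$.

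For the matching lower bound, I would suppose we have $N$ eigenforms $\phi_1(\eps),\dots,\phi_N(\eps)$ of $\Delta_\eps$ with eigenvalues converging to $0$. Repeatedly applying \cref{conc} and the orthogonality lemma above produces in the limit an orthonormal family in $\mcH_\infty$ of the form $(\phi^j_1, \widetilde{\phi^j_2}, \overline{\sigma}^j_{\frac{1}{2}})$, where each $\phi^j_1$ lies in the kernel of $\Delta_{1,W}$ (since it satisfies $\Delta \phi^j_1 = \lambda \phi^j_1 = 0$), each $\widetilde{\phi^j_2}$ is an $L^2$-harmonic extension in $\Ker(\widetilde{D_2})$, and each $\overline{\sigma}^j_{\frac{1}{2}}$ represents an element of $\mcI_{\frac{1}{2}}$. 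Orthonormality in $\mcH_\infty$ then forces $N \leq d$, completing the proof of the first assertion.

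For the second assertion (the case $\lambda \neq 0$), I would use \cref{conc} again: the limit triple satisfies $\|\phi_1\|^2 + \|\widetilde{\phi_2}\|^2 + \|\overline{\sigma}_{\frac{1}{2}}\|^2 = 1$, $\phi_1 \in \Dom(D_{1,W})$ with $\Delta\phi_1 = \lambda\phi_1$ on $M_1(0)$, and $\widetilde{\phi_2} \in \Ker(\widetilde{D_2})$. Since the limit operator acts as $0$ on the second and third summands but the eigenvalue $\lambda_N(\eps_i)$ converges to $\lambda \neq 0$, a standard argument comparing $\lambda_N(\eps_i) \|\Phi^j_{\eps_i}\|^2$ with the quadratic form in the decomposition would show that the contribution of $\widetilde{\phi_2}$ and $\overline{\sigma}_{\frac{1}{2}}$ to the limit must vanish, so $\phi_1 \neq 0$ is a genuine eigenform of $\Delta_{1,W}$ for $\lambda$.

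The main obstacle, in my view, is controlling this last separation argument rigorously: one must show that the $\Ker(\mcD_2)$ and $\mcI_{\frac{1}{2}}$ components cannot ``carry'' any positive eigenvalue through the limit. Concretely, this requires revisiting the quadratic form estimate and verifying that the energy deposited on the rescaled $M_2$-part and the logarithmic quasi-mode in $\mcI_{\frac{1}{2}}$ is $o(1)$, so that if $\phi_1 = 0$ then $q_\eps(\phi_\eps) \to 0$, contradicting $q_\eps(\phi_\eps) = \lambda_N(\eps_i) \to \lambda > 0$. Once this is done, the second statement follows immediately.
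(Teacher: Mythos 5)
Your proposal follows essentially the same route as the paper's proof: use the min-max principle together with (i) the quasi-mode constructions of Section 4.1 for the upper bound and (ii) the isometric extraction of Corollary~\ref{conc} and the orthonormality lemma for the lower bound, then read off the spectrum of the block operator $\Delta_{1,W}\oplus 0\oplus 0$ on $\mcH_\infty$. The paper itself compresses this to two sentences: the extraction process is isometric, and it sends eigenforms of $\Delta_{\eps_i}$ to eigenforms of $\Delta_{1,W}\oplus 0\oplus 0$.

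The ``main obstacle'' you isolate --- that the $\Ker(\mcD_2)$ and $\mcI_{\frac{1}{2}}$ components cannot carry a non-zero eigenvalue through the limit --- is exactly the content of the paper's assertion ``we obtain at the limit eigenforms of $\Delta_{1,W}\oplus\{0\}\oplus\{0\}$,'' and the paper does not spell out that step either, so you have correctly located where the real work lies. Two remarks, though. First, for the stated \emph{conclusion} (if $\lambda\neq 0$ then $\lambda\in\Spec(\Delta_{1,W})$), you only need $\phi_1\neq 0$, not the full separation $(\widetilde{\phi_2},\overline{\sigma}_{\frac{1}{2}})=(0,0)$; Corollary~\ref{conc} already gives $\Delta\phi_1=\lambda\phi_1$, so non-vanishing suffices. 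Second, for the min-max lower bound one does not actually need the limit triples to be \emph{exact} eigenforms: it is enough to bound the Rayleigh quotient on their span, and this follows from $q^{\mathrm{limit}}(\xi_j)=\lambda_j\|\phi^j_1\|^2\le\lambda_j$ together with the orthogonality of eigenforms of $\Delta_{1,W}$ belonging to distinct eigenvalues. With that observation the need for the separation disappears from the counting argument, which is the cleanest way to close the gap you flag.

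One caution about the contradiction you sketch: the fact that $\phi_1=0$ forces $\psi_{1,i}\to 0$ in $L^2(\overline{M}_1)$ does not by itself force the Dirichlet energy on $M_1$ to vanish (a priori, $L^2$-convergence to $0$ is compatible with non-vanishing $H^1$-energy). One must in addition invoke elliptic regularity for the eigenform equation on compact subsets of $M_1(0)$ and the explicit estimates near the cone tip (Proposition~\ref{psi1}, the $O(|\log\eps|^{-1})$ bound on the $\mcI_{\frac{1}{2}}$-concentration, and the $O(\eps_i^2)$ bound on $\|D_2\phi_{2,\eps_i}\|^2$ coming from the $\frac{1}{\eps^2}$ weight in \eqref{quadform}) to conclude that $q_{\eps_i}(\phi_{\eps_i})\to 0$. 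So the argument is correct in spirit but not ``standard''; it really rests on the detailed decomposition in Sections 4.2.2--4.2.5.
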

\begin{proof}
 The last process, with in particular (\ref{p2phi2}) and (\ref{1/2}), 
 constructs in fact an element in the limit Hilbert space
$$
 \mcH_{\infty} := L^2 (\overline{M_1}) \oplus \Ker (\widetilde{D_2}) 
       \oplus  \mcI_{\frac{1}{2}}.
$$
 This process is clearly {\em isometric} in the sense that if we have 
 an orthonormal family $\{ \phi_j (\eps_i) \}_j \ (1 \leq  j \leq N),$ 
 we obtain at the limit an orthonormal family,
 where $\mcH_{\infty}$ is defined as an orthogonal sum of 
  the Hilbert spaces.
 And if we begin with eigenforms of $\Delta_{\eps_i},$  we obtain 
 at the limit  eigenforms of $\Delta_{1,W} \oplus \{ 0 \} \oplus \{ 0 \}.$
 The last calculus implies that 
  $ \liminf_{i \to \infty} \lambda_N (\eps_i) \geq \lambda_N.$
\end{proof}
\begin{rem}
 In order to understand this result, it is important to remember 
  when the eigenvalue $\frac{1}{2}$ occurs in the spectrum of $A$.
 By the expression $\eref{vpA}$, we find that it occurs exactly
\begin{itemize}
 \item for n even, if $\frac{3}{4}$ is an eigenvalue of the Hodge-de Rham
  operator $\Delta_\Sigma$ acting on coexact forms of degree 
  $\frac{n}{2}$ or $\frac{n}{2}-1$ of the submanifold $\Sigma.$
 \item  for n odd, if $0$ is an eigenvalue of $\Delta_\Sigma$ on forms
  of degree $\frac{n-1}{2}, \, \frac{n+1}{2}$,  but also 
  if $1$ is an eigenvalue of coexact forms of degree $\frac{n-1}{2}$ 
  on $\Sigma.$
\end{itemize}
  A dilation of the metric on $\Sigma$ permits to avoid positive 
 eigenvalues, but harmonic forms of degree 
  $\frac{n-1}{2}$ or $\frac{n+1}{2}$ on $\Sigma$ can not be avoid.

 Moreover, Carron has proved (Theorem $0.6$ in \cite{C2}) that 
 the extended index depends only on geometry at infinity: 
 these harmonic forms on $\Sigma$ will indeed create half-bound states,
  and then, small eigenvalues will always appear.
\end{rem}

 \section{Harmonic forms and small eigenvalues.}

 It would be interesting to know how many small (but non-zero) 
 eigenvalues appear. 
 For this purpose, we can use the topological meaning of harmonic forms.

\subsection{Cohomology groups.}
 The topology of $M_{\eps}$ is independent of $\eps \neq 0$ and 
 can be apprehended by the Mayer-Vietoris exact sequence:
\begin{equation*}
 \cdots \to H^{p}(M_\eps) \stackrel{\text{res}}{\longrightarrow} 
  H^{p}(M_1(\eps)) \oplus H^{p}(M_2) \stackrel{\text{dif}}{\longrightarrow} 
  H^{p}(\Sigma) \stackrel{\text{ext}}{\longrightarrow} H^{p+1}(M_\eps) 
  \to \cdots.
\end{equation*}

 As already mentioned, the space $\Ker(\mcD_2) \oplus \mcI_{\frac{1}{2}}$ 
 can be sent in $H^{\ast} (M_2)$. 
 More precisely, Hausel, Hunsicker and Mazzeo in \cite{HHM}, Theorem 1.A, 
 p.490, have proved that the space of the $L^2$-harmonic forms 
 ${\mcH}^k_{L^2}(\widetilde{M_2})$ on $\widetilde{M_2}$ is given by:
\begin{equation} \label{L2-cohomology}
\begin{split}
  {\mcH}^k_{L^2}(\widetilde{M_2}) &\cong
 \begin{cases}
   H^k(M_2, \Sigma)  &  \text{if }  k < \frac{n+1}{2}, \\
   \Im \Big( H^{\frac{n+1}{2}}(M_2,\Sigma) 
     \to H^{\frac{n+1}{2}}(M_2) \Big) & \text{if } k= \frac{n+1}{2}, \\
   H^k(M_2)   & \text{if }  k > \frac{n+1}{2}.
 \end{cases}
\end{split}
\end{equation}
 We note that the space of $L^2$-harmonic forms is equal to that of
 $L^2$-harmonic fields, or the Hodge cohomology group,
 since $\widetilde{M_2}$ is complete.

 For $\overline{M}_1$, we can use the results of Cheeger. 
 Following \cite{Ch80} and \cite{Ch83}, we know that 
 the intersection cohomology groups $ I\!H^* (\overline{M}_1)$  
 of $\overline{M}_1$ coincide with 
  $\Ker(D_{1,\max} \circ D_{1,\min})$, if $H^{\frac{n}{2}}(\Sigma)=0.$
 And we know also that 
\begin{equation}
   I\!H^p (\overline{M}_1) \cong
  \begin{cases}
     H^p (M_1 (\eps))   & \text{ if } p \leq  \frac{n}{2},  \\
     H^p_c(M_1 (\eps))  & \text{ if } p \geq  \frac{n}{2}+ 1.
  \end{cases}
\end{equation}
 These results can be used for our study only if $D_{1,\max}$ and 
 $D_{1,\min}$ coincide.  This occurs if and only if $A$ has no 
 eigenvalues in the interval $(- \frac{1}{2}, \frac{1}{2}).$ 
 As a consequence of the expression of the eigenvalues of $A$, 
 recalled in \eref{vpA}, this is the case if and only if
\begin{itemize}
  \item{\bf for n odd,} the operator $\Delta_\Sigma$ has no eigenvalues
   in $(0, 1)$ on coexact forms of degree $\frac{n-1}{2}$,
  \item {\bf for n even,} the operator $\Delta_\Sigma$ has no eigenvalues 
  in $(0, \frac{3}{4})$ on coexact forms of $\frac{n}{2}$ or $\frac{n}{2}-1,$
   and $H^{ \frac{n}{2} }(\Sigma)=0.$
\end{itemize}

{\it Thus, if $D_{1,\max}=D_{1,\min},$ which implies 
  $ H^{ \frac{n}{2}} (\Sigma)=0$ in the case where $n$ is even, 
  then the map
$$
  H^{ \frac{n}{2}}(M_\eps) \stackrel{\text{res}}{\longrightarrow} 
  H^{\frac{n}{2}}(M_1(\eps)) \oplus H^{\frac{n}{2}}(M_2)
$$
 is surjective and then any small eigenvalue in this degree must 
 come from an element of $\Ker (\mcD_2) \oplus \mcI_{\frac{1}{2}}$ 
 sent to $0$ in $H^{\frac{n}{2}}(M_2).$
 In this case also the map
$$
 H^{ \frac{n}{2}+1}(M_\eps) \stackrel{\text{res}}{\longrightarrow} 
  H^{\frac{n}{2}+1}(M_1(\eps)) \oplus  H^{\frac{n}{2}+1}(M_2)
$$
 is injective, so there may exist small eigenvalues in this degree.
}
\subsection{Some examples.}

  We exhibit a general procedure to construct new examples as follows:
  Let $W_i$, $i=1,2,$ be two compact Riemannian manifolds with boundary  
  $\Sigma_i$ and dimension $n_i +1$ such that $n_1 + n_2 = n \geq 2.$
 We can apply our result to $M_1 := W_1 \times \Sigma_2$ and 
  $M_2 := \Sigma_1 \times W_2$.
 The manifold $M_{\eps}$ is always diffeomorphic to $M = M_1 \cup M_2.$

 For instance, let $v_2$ be the volume form of $(\Sigma_2, h_2).$ 
 It defines a harmonic form on $M_1$ and this form will appear in
 the limit spectrum if, transplanted on $\overline{M_1}$, it defines 
 an element in the domain of the operator $\Delta_{1,W}.$

 In the writing introduced in Section \ref{GBconic}, this element 
 corresponds to $\beta =0$ and $\alpha = r^{ \frac{n}{2} - n_2} v_2$ and 
 the expression of $A$ gives that 
$$
 A (\beta,\alpha) = \left( n_2 - \frac{n}{2} \right) (\beta,\alpha).
$$
 If $\frac{n}{2} - n_2> 0$, then $(\beta, \alpha)$ is in the domain of
  $D_{1,\max} \circ D_{1,\min}$ and if $n_2 = \frac{n}{2}$, it is 
  in the domain of $\Delta_{1,W}$ for the eigenvalue $0$ of $A.$

 So, if we know that $H^{n_2}(M)=0$ or, more generally,  
  $\dim H^{n_2}(M) < \dim H^{n_2}(\Sigma_2)$ in the case where
 $\Sigma_2$ is not connected, then this element will create 
 a small eigenvalue on $M_\eps.$
 This is the case, if $D^k$ denotes the unit ball in $\R^k$, for
$$
  W_1=D^{n_1+1} \, \hbox{ and } \, W_2= D^{n_2+1} \, 
   \hbox{ for $n_2\leq n_1.$ }
$$
 Then,  $M= {\Sphere}^{n_1 + n_2 +1}$ and we obtain 
\begin{cor}
 For any degree $k$ and any $\eps>0$, there exists a metric on $\Sphere^m$ 
  such that the Hodge-de Rham operator acting on $k$-forms admits an 
  eigenvalue  smaller than $\eps.$
  We can see that, for $k < \frac{m}{2},$ it is in the spectrum of coexact 
  forms, and by the duality, for $k \geq \frac{m}{2}$ in the spectrum of 
  exact $k$-forms. 
\end{cor}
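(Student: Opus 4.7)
The plan is to apply the product construction just described with $W_1 = D^{n_1+1}$ and $W_2 = D^{n_2+1}$, $n_1 + n_2 = m - 1$, so that $M = \Sphere^m$, and to force small non-zero eigenvalues by exhibiting an element of the limit kernel whose cohomology class vanishes on $\Sphere^m$. The extremal cases $k=0$ and $k=m$ are trivial: the constant functions and the Riemannian volume form lie in the kernel of the Hodge-de Rham operator for any metric.

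Fix $k$ with $1 \leq k \leq (m-1)/2$ and take $n_2 = k$, $n_1 = m-1-k \geq n_2$. The pull-back of the volume form $v_2$ of $\Sphere^{n_2}$ is a harmonic $k$-form on $M_1 = D^{n_1+1} \times \Sphere^{n_2}$, whose transplant on $\overline{M_1}$ has the cone expression $(\beta,\alpha) = (0, r^{n/2 - n_2} v_2)$. The $A$-eigenvalue is $\gamma = n_2 - n/2 \leq 0$: for $n_2 < n/2$ one has $\gamma \leq -1$, so the transplant already lies in $\Dom(D_{1,\min})$ and hence in $\Dom(\Delta_{1,W})$ for any admissible $W$; for $n_2 = n/2$ (only possible when $m$ is odd), $\gamma = 0 \in (-\tfrac{1}{2}, \tfrac{1}{2})$ and one chooses $W$ to include the line in $\Ker A$ spanned by this element. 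In either case the identity $-\partial_r^2(r^{-\gamma}) + \tfrac{1}{r^2}\gamma(\gamma+1) r^{-\gamma} = 0$, together with \eref{laplace}, shows that the transplant is annihilated by $\Delta_{1,W}$, so it contributes a non-trivial element to $\Ker(\Delta_{1,W})$ in degree $k$.

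Since $H^k(\Sphere^m) = 0$ for $1 \leq k \leq m-1$, the genuine Hodge-de Rham kernel of $\Delta_{\eps}$ in degree $k$ is trivial, whereas \tref{B} guarantees that the multiplicity of $0$ in the limit spectrum is at least $\dim \Ker(\Delta_{1,W}) \geq 1$. By the spectral convergence of Theorems \ref{A}--\ref{B}, some positive eigenvalue $\lambda_N(\eps)$ of $\Delta_{\eps}$ on $k$-forms must tend to $0$ as $\eps \to 0$, yielding a positive eigenvalue below any prescribed threshold for the metric $g_{\eps}$ on $\Sphere^m$ with $\eps$ small enough. Because the quasi-mode built from $v_2$ is coclosed to leading order ($v_2$ being itself coclosed on $\Sigma$), the associated small eigenvalue lives in the spectrum of $dd^{\ast}$ restricted to $\Im d^{\ast}$, i.e.\ it belongs to the coexact part of the spectrum.

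The remaining degrees follow from Hodge duality. For $m/2 < k \leq m-1$ the Hodge star $\ast$, being an isometric involution on forms that commutes with $\Delta_{\eps}$ and exchanges $\Im d$ with $\Im d^{\ast}$, sends the small coexact $(m-k)$-eigenvalue just produced (note $m-k \leq (m-1)/2$) to a small exact $k$-eigenvalue of the same value. The borderline case $k = m/2$ with $m$ even is reached by running the previous construction with $n_2 = m/2 - 1 \leq (m-1)/2$ to obtain a small coexact $(m/2-1)$-eigenform $\omega$ with eigenvalue $\lambda > 0$, and then remarking that $d\omega$ is a small exact $m/2$-eigenform of the same eigenvalue $\lambda$, non-zero because $\Delta\omega = d^{\ast} d\omega$ has eigenvalue $\lambda > 0$. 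The main thing to check is the domain condition for the transplanted $v_2$, which is immediate from the $A$-eigenvalue formula, together with the identification of the coexact or exact nature of the resulting small eigenvalue, which is immediate from the quasi-mode structure and Hodge duality.
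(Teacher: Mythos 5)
Your proposal follows the paper's approach exactly: take $W_1 = D^{n_1+1}$, $W_2 = D^{n_2+1}$ so that $M=\Sphere^m$, show that the pullback of the volume form of $\Sphere^{n_2}$ gives a non-trivial element of $\Ker(\Delta_{1,W})$ in degree $k=n_2$, invoke $H^k(\Sphere^m)=0$ together with Theorems \ref{A} and \ref{B} to force a small positive eigenvalue, then dispatch the middle degree via $d\omega$ and the remaining degrees via Hodge duality. The overall argument is sound, but several claims along the way are inaccurate. First, the bound ``$\gamma\le -1$ for $n_2<n/2$'' is false when $m$ is even: with $\gamma=n_2-(m-1)/2$ and $n_2\le \tfrac m2-1$, the borderline value $\gamma=-\tfrac12$ is attained at $k=\tfrac m2-1$; the transplant still lies in $\Dom(D_{1,\min})$, but because $-\tfrac12$ is \emph{outside} the index set $|\gamma|<\tfrac12$ of $B$, not because $\gamma\le-1$.

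Second, and more substantially, one cannot ``choose $W$.'' The extension $\Delta_{1,W}$ appearing in \tref{A} is pinned down by the geometry of $\widetilde M_2$ through \eqref{defW}; it is not a free parameter. In the borderline case $\gamma=0$ (that is, $m$ odd, $k=(m-1)/2$) you would need to verify that the class of $v_2$ in $\Ker A$ actually lies in the space $W_0$, i.e.\ that it is the boundary value of an extended solution on $\widetilde M_2$. The paper asserts this without proof in the preceding discussion, so the omission is at the paper's own level of detail, but your phrasing indicates a misconception about the role of $W$. Third, $k=0$ and $k=m$ are not trivial in the intended sense: the corollary asks for a small eigenvalue in the coexact (resp.\ exact) spectrum, and the constant function and volume form contribute only the zero eigenvalue of the harmonic part. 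The case $k=0$ is covered by the same construction with $n_2=0$, $\Sigma_2=\Sphere^0$ (using $\dim H^0(\Sphere^m)=1<2=\dim H^0(\Sphere^0)$), recovering Cheeger's dumbbell as the paper notes; $k=m$ then follows by duality.
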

 Indeed, the case $k < \frac{m}{2}$ is a direct application, as explained 
  above. We see that our quasi-mode is coclosed.
 Thus, in the case where $m$ is even, if $\omega$ is an eigenform of degree 
  $\frac{m}{2} - 1$ with small eigenvalue,
 then $d \omega$ is a closed eigenform with the same eigenvalue and 
  degree $\frac{m}{2}$. 
  Finally, the case $k > \frac{m}{2}$ is obtained by the Hodge duality.
 We remark that, in the case $k=0$ we recover {\it Cheeger's dumbbell},
  and also that this result has been proved by Guerini in \cite{Gue} 
  with another deformation, although he did not give the convergence 
  of the spectrum.

\vspace{0.5cm}
 By the surgery of the previous case, we obtain, for
 $$ W_1 := \Sphere^{n_1} \times [0,1]  \, \hbox{ and } \,
    W_2 := D^{n_2+1} \, \hbox{for $0 \leq n_2 <n_1,$ and 
    $n= n_1 + n_2 \geq 2$}
$$
 that $\Sigma_1 = \Sphere^{n_1} \sqcup \Sphere^{n_1},$ 
  $ \Sigma_2 = \Sphere^{n_2}$ and  $M= \Sphere^{n_1} \times \Sphere^{n_2+1}.$
 The volume form $v_2 \in H^{n_2}(\Sigma_2)$ defines again a harmonic form 
 on $\overline{M}_1$ and, 
 since $H^{n_2}( \Sphere^{n_1} \times \Sphere^{n_2+1}) =0$, if $n_2 <n_1,$ 
 then $v_2$ defines a small eigenvalue on $n_2$-forms of $M_{\eps}.$

 Thus, by the duality, we obtain
\begin{cor}
 For any $k, l \ge 0$ with $0 \leq  k-1 < l$ and any $\eps >0$, 
  there exists a metric on $\Sphere^l \times \Sphere^{k}$ 
  such that the Hodge-de Rham operator acting on $(k-1)$-forms and 
  on $(l+1)$-forms admits an eigenvalue smaller than $\eps.$
\end{cor}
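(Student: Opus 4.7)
The plan is to specialize the construction of the preceding paragraph to $n_1 = l$, $n_2 = k - 1$, $W_1 = \Sphere^l \times [0,1]$ and $W_2 = D^k$. The hypothesis $0 \leq k - 1 < l$ becomes the required $0 \leq n_2 < n_1$. Writing $\Sphere^k$ as $D^k \cup_{\Sphere^{k-1}} (\Sphere^{k-1} \times [0,1]) \cup_{\Sphere^{k-1}} D^k$ and taking products with $\Sphere^l$ identifies $M_\eps$ with $\Sphere^l \times \Sphere^k$, of total dimension $l + k$.

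By the preceding paragraph, the volume form $v_2$ of $\Sigma_2 = \Sphere^{k-1}$ lifts to a harmonic $(k-1)$-form on $M_1$ whose transplantation to $\overline{M_1}$ lies in $\Ker(\Delta_{1,W})$: in the coordinates of \sref{GBconic} it takes the form $(0, r^{n/2 - n_2} v_2)$ with $A$-eigenvalue $n_2 - n/2 = (k - l - 1)/2 \leq -\tfrac{1}{2}$, and the strict inequality $n_2 < n_1$ gives $n/2 - n_2 > 0$, which is exactly the sufficient condition highlighted there. Comparing with the topology of $M_\eps$: for $k \geq 2$ Künneth yields $H^{k-1}(\Sphere^l \times \Sphere^k) = 0$, while for $k = 1$ the disconnectedness of $\Sphere^0$ gives $\dim H^0(\Sigma_2) = 2 > 1 = \dim H^0(\Sphere^l \times \Sphere^1)$, so in either case the criterion $\dim H^{n_2}(M) < \dim H^{n_2}(\Sigma_2)$ identified before the corollary applies. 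Combining this with \tref{B}, the limit multiplicity of $0$ in degree $k - 1$ strictly exceeds the dimension of genuine $(k-1)$-harmonic forms on $M_\eps$, forcing a nonzero small eigenvalue $\lambda(\eps) \to 0$ on $(k-1)$-forms as $\eps \to 0$.

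The $(l+1)$-form case follows immediately by Hodge duality: since $\dim M_\eps = l + k$, the Hodge $*$-operator maps $(k-1)$-forms to $(l+1)$-forms and commutes with $\Delta_\eps$, so the image $*\omega(\eps)$ is a small $(l+1)$-eigenform with the same eigenvalue. Choosing $\eps$ sufficiently small places both eigenvalues below any prescribed threshold.

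The main expected obstacle is the dimension-counting step: one must verify that the class coming from $v_2$ enters independently in the total count $\dim \Ker(\Delta_{1,W}) + \dim \Ker(\mcD_2) + i_{\tfrac{1}{2}}$ of \tref{B}, and that this count strictly exceeds $\dim H^{k-1}(M_\eps)$ in the precise degree $k - 1$. The case $k = 1$ is particularly subtle since $M_1 = (\Sphere^l \times [0,1]) \sqcup (\Sphere^l \times [0,1])$ is disconnected and the target $H^0(\Sphere^l \times \Sphere^1) = \R$ is nonzero; one has to check that the antisymmetric contribution of $v_2$ on $\Sphere^0$ is indeed independent of the constant function on $M_\eps$, and that the general machinery of \tref{A} and \tref{B} continues to apply despite the disconnectedness of $M_1$.
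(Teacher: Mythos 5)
Your argument is, up to wording, the paper's own primary proof: take $W_1 = \Sphere^l \times [0,1]$ and $W_2 = D^k$ in the surgery construction preceding the corollary, so that $n_1 = l$, $n_2 = k-1$ and $M = \Sphere^l \times \Sphere^k$; check that the transplanted volume form of $\Sigma_2 = \Sphere^{k-1}$ lies in $\Ker(\Delta_{1,W})$ because $n_2 < n_1$ forces its $A$-eigenvalue $\leq -\tfrac{1}{2}$; compare with $H^{k-1}(M)$; and finish by Hodge duality. You are in fact a little more careful than the text at the edge case $k = 1$: the paper writes $H^{n_2}(\Sphere^{n_1} \times \Sphere^{n_2+1}) = 0$, which fails when $n_2 = 0$, whereas you correctly fall back on the disconnectedness criterion $\dim H^{n_2}(M) < \dim H^{n_2}(\Sigma_2)$ with $\dim H^0(\Sphere^0) = 2$. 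Two remarks. First, the paper also records a one-line alternative that sidesteps both the disconnectedness of $M_1, M_2$ for $k = 1$ and the multiplicity book-keeping you flag as the ``main expected obstacle'': by the preceding corollary there is a metric on $\Sphere^k$ whose Hodge--de Rham operator has a small eigenvalue on $(k-1)$-forms, a Riemannian product metric on $\Sphere^l \times \Sphere^k$ inherits it, and $*$ again gives the $(l+1)$-form statement. Second, a caveat shared by your proof and the paper's primary one: the surgery requires $n = l + k - 1 \geq 2$, so the case $k = l = 1$ (degrees $0$ and $2$ on the torus) is permitted by the corollary's hypotheses but is not actually reached by this construction and needs the product-metric alternative or a direct dumbbell-type argument.
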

 This corollary is also a consequence of the previous one: 
  we know that there exists a metric on $\Sphere^{k}$ whose Hodge-de Rham 
  operator admits a small eigenvalue on $(k-1)$-forms, 
 and this property is maintained on $\Sphere^{l} \times \Sphere^{k+1}.$

 With the same construction, we can exchange the roles of $M_1$ and $M_2$: 
 the two volume forms of $\Sphere^{n_1} \sqcup \Sphere^{n_1}$ create 
 one $n_1$-form with small but non-zero eigenvalue on 
  $\Sphere^{n_1} \times \Sphere^{n_2+1}$, if $n_1 \leq n_2 +1.$ 
  By the duality, we obtain an $(n_2+1)$-form with small eigenvalue.
   So, with new notations, we have obtained
\begin{cor}
 For any $k<l$ with $k+l \geq 3$ and any $\eps>0$, there exists a metric on
 $\Sphere^l \times \Sphere^k$ such that the Hodge-de Rham operator acting 
 on $l$-forms and on $k$-forms admits a positive eigenvalue smaller 
 than $\eps.$
\end{cor}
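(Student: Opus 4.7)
The plan is to re-invoke the surgery construction of the previous corollary, $W_1 = \Sphere^{n_1} \times [0,1]$ and $W_2 = D^{n_2+1}$ with $M = \partial(W_1 \times W_2) \cong \Sphere^{n_1} \times \Sphere^{n_2+1}$, but now to exchange the two pieces: one collapses $M_2 = W_1 \times \Sigma_2$ inside $M_1 \cup_\Sigma M_2$, with $M_1 = \Sigma_1 \times W_2 = (\Sphere^{n_1} \sqcup \Sphere^{n_1}) \times D^{n_2+1}$. Under the identification $k = n_1$ and $l = n_2 + 1$, the hypothesis $k<l$ reads $n_1 \leq n_2$ and $k+l \geq 3$ reads $n = n_1 + n_2 \geq 2$ (we tacitly assume $k \geq 1$; the case $k=0$ reduces to the classical Cheeger dumbbell and is not the one this construction is aimed at).

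The key input is that each of the two connected components of $\Sigma_1$ carries its own volume $n_1$-form, producing by pullback two linearly independent harmonic $n_1$-forms $V_1^{(1)}, V_1^{(2)}$ on $M_1$. Reading formula \eref{vpA} for $p = n_1$, they correspond, under $U$, to $A$-eigenvectors with eigenvalue $(n_1-n_2)/2 \leq 0$. When $n_1 < n_2$ this eigenvalue is $\leq -\tfrac{1}{2}$, placing the extended forms in $\Dom(D_{1,\min})$ and hence in the kernel of \emph{every} self-adjoint extension $\Delta_{1,W}$; when $n_1 = n_2$ the eigenvalue is $0$ but each $V_1^{(i)}$ is already harmonic on the whole cone, and one checks using the defining condition \eref{defW} that its boundary datum lies in the specific $W$ arising from $\widetilde{M_2}$. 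In either case, $\dim \Ker(\Delta_{1,W})$ restricted to $n_1$-forms is at least $2$.

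By Theorem B, applied degree by degree (which is legitimate since all the operators involved are degree-preserving on their kernels), the multiplicity of $0$ in the limit spectrum in degree $k$ is therefore $\geq 2$. On the other hand, Hodge theory combined with the Künneth formula, using $0<k<l$, gives $\dim H^k(\Sphere^l \times \Sphere^k) = 1$, so a pigeonhole argument forces at least one of the limiting zeros to be a strictly positive eigenvalue of $\Delta_\eps$ on $k$-forms, which can be made arbitrarily small by choosing $\eps$ small. Applying the Hodge $\star$-operator on the closed oriented manifold $M_\eps$ of dimension $l+k$ then transports this small positive eigenvalue from degree $k$ to degree $l+k-k = l$. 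The part of the argument I expect to need the most care is the identification of $V_1^{(i)}$ with an element of $W$ in the borderline case $n_1 = n_2$, where one must argue directly from \eref{defW} via the Dirac-Neumann operator of $M_2$ rather than simply invoking the automatic inclusion in $\Dom(D_{1,\min})$; everything else is a direct recycling of the reasoning already used in the two preceding corollaries.
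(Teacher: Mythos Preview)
Your proposal is correct and follows essentially the same approach as the paper. The paper's own argument is a two-line sketch: exchange the roles of $M_1$ and $M_2$ in the construction $W_1=\Sphere^{n_1}\times[0,1]$, $W_2=D^{n_2+1}$, observe that the two volume forms of $\Sigma_1=\Sphere^{n_1}\sqcup\Sphere^{n_1}$ produce (for $n_1\leq n_2+1$) one $n_1$-form with small nonzero eigenvalue on $\Sphere^{n_1}\times\Sphere^{n_2+1}$, and then invoke Hodge duality; you have simply unpacked each of these steps --- the identification $k=n_1$, $l=n_2+1$, the eigenvalue $(n_1-n_2)/2$ of $A$, the dimension count $\dim H^k(\Sphere^l\times\Sphere^k)=1$ versus the two harmonic forms in $\Ker(\Delta_{1,W})$ --- and correctly flagged the borderline case $n_1=n_2$ (where membership in $W$ must be checked rather than inherited from $\Dom(D_{1,\min})$) as the one place requiring genuine verification, a point the paper itself passes over without comment.
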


 More generally, by repeating the $(k-1)$-dimensional surgery by $L$-times, 
 we obtain the following:
\begin{pro}[\cite{Sha-Yang[91]}] \label{prop:Sha-Yang[91]}
 The connected sum of the $L$-copies of the product spheres 
 $\displaystyle { \underset{i=1}{ \overset{L}{\sharp} } } 
   ( {\Sphere}^k \times {\Sphere}^l ) $
 can be decomposed as follows:
\begin{equation*}
\begin{split}
  { \underset{i=1}{ \overset{L}{\sharp} } } 
         ( {\Sphere}^k \times {\Sphere}^l )
    &\cong  \left( {\Sphere}^{k-1} \times  \Big( {\Sphere}^{l+1} \setminus 
        \coprod_{i=0}^{L} D^{l+1}_i \Big) \right) \bigcup_{\partial} 
        \left( D^{k} \times  \coprod_{i=0}^{L}  {\Sphere}^{l}_i \right).
\end{split}
\end{equation*}
\end{pro}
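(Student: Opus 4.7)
The plan is to identify the right-hand side as the result of performing $L$ trivially-framed $(k-1)$-surgeries on $\Sphere^{k+l}$, each along a standardly embedded copy of $\Sphere^{k-1}$, and then to invoke the classical fact that such a surgery replaces a closed $(k+l)$-manifold $M$ by the connected sum $M \sharp (\Sphere^k \times \Sphere^l)$. The argument proceeds by induction on $L$.

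The base case $L=0$ is the tautology
\begin{equation*}
 \Sphere^{k+l} \cong (\Sphere^{k-1} \times D^{l+1})
   \cup_{\Sphere^{k-1} \times \Sphere^l} (D^k \times \Sphere^l)
   = \partial(D^k \times D^{l+1}),
\end{equation*}
which matches the right-hand side of the claimed decomposition since $\Sphere^{l+1} \setminus D_0^{l+1} \cong D^{l+1}$.

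For the inductive step I would isolate one of the removed disks, say $D_L^{l+1}$, and use the identity
\begin{equation*}
 \Sphere^{l+1} \setminus \coprod_{i=0}^L D_i^{l+1}
   = \Big( \Sphere^{l+1} \setminus \coprod_{i=0}^{L-1} D_i^{l+1} \Big)
      \setminus D_L^{l+1}
\end{equation*}
to see that the $L$-th right-hand side is obtained from the $(L-1)$-st right-hand side by excising the product piece $\Sphere^{k-1} \times D_L^{l+1}$ and gluing in $D^k \times \Sphere^l_L$ along the common boundary $\Sphere^{k-1} \times \Sphere^l_L$. This is, by definition, a $(k-1)$-surgery with the trivial framing supplied by the product structure. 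Since $D_L^{l+1}$ may be isotoped into a coordinate ball of $\Sphere^{l+1}$, the attaching sphere $\Sphere^{k-1} \times \{p\}$ lies in an embedded $(k+l)$-ball of the ambient manifold and bounds an embedded $k$-disk there, so it is unknotted with trivial framing. Hence the surgery has the effect of a further connected sum with $\Sphere^k \times \Sphere^l$, and the inductive hypothesis yields $\sharp^L(\Sphere^k \times \Sphere^l)$.

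The main obstacle is the final step: justifying that a trivially-framed surgery on an unknotted $\Sphere^{k-1}$ contained in a ball performs a connected sum with $\Sphere^k \times \Sphere^l$. The standard verification is that the local model of the surgery is exactly the $L=0$ base decomposition run in reverse, so that the ambient ball around the attaching sphere is replaced by $\Sphere^k \times \Sphere^l$ minus an open ball; this is, by definition, the connected sum operation. One should also check that the $L$ surgeries can be performed disjointly and in any order, which is immediate here because the disks $D_i^{l+1}$ are disjoint in $\Sphere^{l+1}$ and can simultaneously be isotoped into disjoint coordinate balls.
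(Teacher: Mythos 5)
Your proof is correct and follows exactly the approach the paper alludes to (``repeating the $(k-1)$-dimensional surgery by $L$-times''), which it attributes to Sha--Yang without reproducing details. Your inductive verification that excising $\Sphere^{k-1}\times D^{l+1}_L$ and gluing $D^k\times\Sphere^l_L$ is a trivially-framed unknotted surgery, hence a connected sum with $\Sphere^k\times\Sphere^l$, supplies exactly the missing bookkeeping.
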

\begin{rem}
 J-P.\ Sha and D-G.\ Yang \cite{Sha-Yang[91]} constructed a Riemannian 
 metric of positive Ricci curvature on this manifold.
 More generally, see also Wraith \cite{Wraith[07]}.
\end{rem}

 As similar way using Proposition \ref{prop:Sha-Yang[91]},
 we can obtain the small positive eigenvalues on the connected sum of 
 the $L$-copies of the product spheres
 ${ \underset{i=1}{ \overset{L}{\sharp} } } 
       \Big( {\Sphere}^k \times {\Sphere}^l \Big). $

 All these examples use the spectrum of $\overline{M}_1.$ 
 We can obtain also examples using the reduced $L^2$-cohomology group 
 of $\widetilde{M}_2,$ which is given by \eqref{L2-cohomology}
  (Hausel, Hunsicker and Mazzeo \cite{HHM}).

 Suppose now that $n = \dim \Sigma$ is odd.
 Then, we have the long exact sequence
$$
  \cdots \to H^k (M_2, \Sigma) \to H^k(M_2)  \to H^k(\Sigma) 
   \to H^{k+1}(M_2, \Sigma) \to \cdots.
$$
 For $k= \frac{n-1}{2}$, the space $H^k (M_2, \Sigma)$ is isomorphic 
 to the reduced $L^2$-cohomology group of $\widetilde{M_2}.$
 If $H^{ \frac{n-1}{2}}(\Sigma)$ is non-trivial, 
 then any non-trivial harmonic $k$-form on $\Sigma$ will create 
 an extended solution,  corresponding to an eigenvector of $A$ 
 with eigenvalue $\frac{1}{2}.$

 For example, take $\Sigma = \Sphere^{k} \times \Sphere^{k+1}$ for 
 $k= \frac{n-1}{2}$, then $H^k(\Sigma)$ is non-trivial.
 Any non-trivial form $\omega \in H^k(\Sigma)$ sent to 
 $0 \in H^{k+1}(M_2, \Sigma)$ comes from an element
  $\widetilde{\omega} \in H^k(M_2)$ which is not in the reduced 
 $L^2$-cohomology group of $\widetilde{M}_2$ by \eqref{L2-cohomology}.


\end{document}